\definecolor{mydarkblue}{RGB}{0,0,155}
\author{
  \textbf{Matthias Schötz}
  \thanks{Boursier de l'ULB, \href{mailto:Matthias.Schotz@ulb.ac.be}{\texttt{Matthias.Schotz@ulb.ac.be}}.
  This work was supported by the Fonds de la Recherche Scientifique (FNRS) and the Fonds Wetenschappelijk
  Onderzoek - Vlaaderen (FWO) under EOS Project n$^0$30950721.}\\
  Département de Mathématiques\\
  Université libre de Bruxelles
}
\renewcommand{\mathbb}[1]{\mathbbm{#1}}
\newcommand{\refitem}[1] {\textit{\ref{#1}.)}}
\numberwithin{equation}{section}
\let\originalleft\left
\let\originalright\right
\renewcommand{\left}{\mathopen{}\mathclose\bgroup\originalleft}
\renewcommand{\right}{\aftergroup\egroup\originalright}
\newcommand{\lemmachairxname}{Lemma}
\newcommand{\propositionchairxname}{Proposition}
\newcommand{\theoremchairxname}{Theorem}
\newcommand{\corollarychairxname}{Corollary}
\newcommand{\definitionchairxname}{Definition}
\newcommand{\examplechairxname}{Example}
\newcommand{\proofchairxname}{Proof}
\SetString{\lemmachairxname}{Lemma}
\SetString{\propositionchairxname}{Proposition}
\SetString{\theoremchairxname}{Theorem}
\SetString{\corollarychairxname}{Corollary}
\SetString{\definitionchairxname}{Definition}
\SetString{\examplechairxname}{Example}
\SetString{\proofchairxname}{Proof}
\newtheorem{lemma}{\lemmachairxname}[section]
\newtheorem{proposition}[lemma]{\propositionchairxname}
\newtheorem{theorem}[lemma]{\theoremchairxname}
\newtheorem{corollary}[lemma]{\corollarychairxname}
\newtheorem{definition}[lemma]{\definitionchairxname}
\newtheorem{example}[lemma]{\examplechairxname}
\def\theorem@checkbold{}
\theoremstyle{nonumberplain}
\newtheorem{proof}{Proof}
\newcommand{\RE}             {\operatorname{\mathsf{Re}}}
\newcommand{\IM}             {\operatorname{\mathsf{Im}}}
\newcommand{\I}              {\mathrm{i}}
\newcommand{\cc}[1]          {\overline{{#1}}}
\newcommand{\Unit}           {\mathbb{1}}
\newcommand{\id}              {\mathrm{id}}
\newcommand{\argument}       {\ignorespaces{\,\cdot\,}\ignorespaces}
\DeclarePairedDelimiter{\abs}{\lvert}{\rvert}
\DeclarePairedDelimiter{\norm}{\lVert}{\rVert}
\newcommand{\hilbert}[1]       {\mathfrak{#1}}
\newcommand{\Stetig}         {\mathscr{C}}
\DeclareFontFamily{U}{FdSymbolF}{}
\DeclareFontShape{U}{FdSymbolF}{m}{n}{
    <-7.1> FdSymbolF-Book
    <7.1-> FdSymbolF-Book
}{}
\DeclareSymbolFont{delimiters}{U}{FdSymbolF}{m}{n}
\DeclareMathDelimiter{\llangle}{\mathopen}{delimiters}{"92}{delimiters}{"92}
\DeclareMathDelimiter{\rrangle}{\mathclose}{delimiters}{"98}{delimiters}{"98}
\DeclarePairedDelimiter{\ordinaryIP}{\langle}{\rangle}
\DeclarePairedDelimiter{\ordinarySet}{\{}{\}}
\DeclarePairedDelimiter{\coolHulls}{\llangle}{\rrangle}
\newcommand{\CC}{\mathbb{C}}
\newcommand{\RR}{\mathbb{R}}
\newcommand{\NN}{\mathbb{N}}
\newcommand{\skal}[3][]{\ordinaryIP[#1]{\,#2 \,#1|\, #3\,}}
\newcommand{\set}[3][]{\ordinarySet[#1]{\,#2 \;#1|\; #3\,}}
\newcommand{\genPos}[2][]{\coolHulls[#1]{\,#2\,}_{\mathrm{pos}}}
\newcommand{\Hermitian}{\textup{H}}
\newcommand{\seminorm}[3][]{\norm[#1]{#3}_{#2}}
\newcommand{\Dom}[1][]{\mathcal{D}_{\smash{#1}}}
\newcommand{\Hilb}{\hilbert{H}}
\newcommand{\algebra}[1]{\mathcal{#1}}
\newcommand{\Lin}{\mathfrak{L}}
\newcommand{\Adbar}{\mathfrak{L}^*}
\newcommand{\A}{\algebra{A}}
\newcommand{\RieszSpace}[1]{\mathcal{#1}}
\newcommand{\R}{\RieszSpace{R}}
\newcommand{\metric}{\mathrm{d}}
\newcommand{\Su}{\textit{Su}}
\newcommand{\Sus}{\textit{Su}$^*$}
\newcommand{\bd}{\mathrm{bd}}
\newcommand{\neu}[1]{\uline{#1}}
\title{%
\texorpdfstring{%
Equivalence of Order and Algebraic Properties\\in Ordered $^*$\=/Algebras%
}{%
Equivalence of Order and Algebraic Properties in Ordered *-Algebras%
}%
}
\date{June 2020}
\begin{document}
\begin{onehalfspace}
\allowdisplaybreaks

\maketitle

\begin{abstract}
  The aim of this article is to describe a class of $^*$\=/algebras
  that allows to treat well-behaved algebras of unbounded operators
  independently of a representation. To this end, Archimedean
  ordered $^*$\=/algebras ($^*$\=/algebras whose 
  real linear subspace of Hermitian elements are an Archimedean ordered vector space
  with rather weak compatibilities with the algebraic structure) are examined. 
  The order induces a translation-invariant uniform metric which comes from a 
  $C^*$\=/norm in the bounded case. It will then be shown that uniformly complete Archimedean ordered
  $^*$\=/algebras have good order properties (like existence of infima, suprema or absolute values)
  if and only if they have good algebraic properties (like existence of inverses
  or square roots). This suggests the definition of \Sus\=/algebras as uniformly complete Archimedean
  ordered $^*$\=/algebras which have all these equivalent properties.
  All methods used are completely elementary and do not require any representation
  theory and not even any assumptions of boundedness, so \Sus\=/algebras generalize 
  some important properties of $C^*$\=/algebras to algebras of unbounded operators. 
  Similarly, they generalize some properties of $\Phi$-algebras (certain lattice-ordered
  commutative real algebras) to non-commutative ordered $^*$\=/algebras.
  It is also shown that the order on \Sus\=/algebra is uniquely determined, so
  \Sus\=/algebras are indeed just a class of well-behaved $^*$\=/algebras.
  As an example, \Sus\=/algebras of unbounded operators on a Hilbert space are 
  constructed. They arise e.g. as $^*$\=/algebras of symmetries of a self-adjoint
  (not necessarily bounded) Hamiltonian operator of a quantum mechanical system.
\end{abstract}

%  The aim of this article is to describe a class of *-algebras that allows to treat well-behaved algebras of unbounded operators independently of a representation. To this end, Archimedean ordered *-algebras (*-algebras whose real linear subspace of Hermitian elements are an Archimedean ordered vector space with rather weak compatibilities with the algebraic structure) are examined. The order induces a translation-invariant uniform metric which comes from a C*-norm in the bounded case. It will then be shown that uniformly complete Archimedean ordered *-algebras have good order properties (like existence of infima, suprema or absolute values) if and only if they have good algebraic properties (like existence of inverses or square roots). This suggests the definition of Su*-algebras as uniformly complete Archimedean ordered *-algebras which have all these equivalent properties. All methods used are completely elementary and do not require any representation theory and not even any assumptions of boundedness, so Su*-algebras generalize some important properties of C^*-algebras to algebras of unbounded operators. Similarly, they generalize some properties of Phi-algebras (certain lattice-ordered commutative real algebras) to non-commutative ordered *-algebras. As an example, Su*-algebras of unbounded operators on a Hilbert space are constructed. They arise e.g. as *-algebras of symmetries of a self-adjoint (not necessarily bounded) Hamiltonian operator of a quantum mechanical system.
 
\section{Introduction}
\label{sec:Introduction}
Many important examples of $^*$\=/algebras, especially $^*$\=/algebras of complex-valued
functions or $^*$\=/algebras of adjointable endomorphisms, carry a partial order on their Hermitian
elements that is compatible with the algebraic structure: 
In the former case, this is the order by pointwise comparison of real-valued
functions, in the latter it is the usual order on Hermitian operators.
From a more abstract point of view, it has long been known that there exists an
intrinsic partial order on the Hermitian elements of a $C^*$\=/algebra, which can be
defined in many equivalent ways (e.g. by declaring squares of Hermitian elements
to be the positive ones, or elements with non-negative real spectrum). This is
of course not surprising as $C^*$\=/algebras can always be represented as $^*$\=/algebras
of bounded operators, and in the commutative case even as $^*$\=/algebras
of continuous functions. However, a generalization
of this approach seems to be difficult, at least in the realm of topological $^*$\=/algebras:
Already Banach $^*$\=/algebras can have extremely pathological order properties.
Because of this, examining
$^*$\=/algebras having in some sense ``unbounded'' elements by means of locally convex
$^*$\=/algebras is a rather hard task.

However, in the commutative case, $^*$\=/algebras are just the complexifications of
real associative algebras. So the theory of ordered real algebras, especially
of lattice ordered ones like (almost) $f$\=/algebras and $\Phi$\=/algebras, immediately
carries over and yields examples of well-behaved ordered $^*$\=/algebras even beyond
the scope of $C^*$\=/algebras. The representation theorem \cite[Thm.~2.3]{henriksen.johnson:structureOfArchimedeanLatticeOrderedAlgebras}
for $\Phi$\=/algebras as algebras of functions on a compact Hausdorff space with values in the extended
real numbers further exemplifies the close relation between commutative $C^*$\=/algebras
and (complexifications of) $\Phi$\=/algebras.

The aim of the present article is to examine ordered $^*$\=/algebras
and ultimately to determine a class of very well-behaved ordered $^*$\=/algebras
that generalize important properties of $C^*$\=/algebras to the unbounded case,
as well as properties of $\Phi$\=/algebras to the non-commutative case. This includes
the existence of suprema and infima of finitely many commuting Hermitian elements,
of absolute values, square roots of positive elements and inverses of elements that
are coercive (i.e. ``strictly'' positive), as well as automatic continuity
of unital $^*$\=/homomorphisms and the uniqueness of the order. Special attention 
is given to situations where order-theoretic and algebraic concepts are equivalent.
The most obvious example for this are absolute values: The absolute value $\abs{a}$ of a Hermitian
element $a$ should be, from the purely order-theoretic point of view, the supremum of $a$ and $-a$.
But from a more algebraic point of view, $\abs{a}$ should be the (positive) square root of $a^2$.
This raises the question whether, or under which circumstances, the two descriptions
are equivalent.

It will be shown that every Archimedean ordered $^*$\=/algebra carries a metrizable,
translation-invariant topology. In the bounded case, this topology comes from a
$C^*$-norm as in \cite{scheiderer:positivityAndSOSaGuide}, but there is no need for restriction to this special case: Theorem~\ref{theorem:su}
shows that for complete Archimedean ordered $^*$\=/algebras, the first properties 
mentioned above (from existence of suprema and infima to existence of inverses) 
are all equivalent and then imply the others (automatic continuity, uniqueness of the 
order and compatibility or equivalence of some further order-theoretic 
and algebraic concepts). Those algebras where these equivalent properties are
fulfilled will be called \Sus\=/algebras. They include $C^*$\=/algebras as well 
as (complexifications of) complete $\Phi$\=/algebras as special cases. In the end, examples
of \Sus\=/algebras of unbounded operators, which are neither $C^*$- nor $\Phi$\=/algebras,
will be constructed. This way, \Sus\=/algebras allow to examine $^*$\=/algebras
of unbounded operators independently of concrete representations.

The article is organized as follows: The next Section~\ref{sec:preliminaries}
explains the notation and gives some basic and well-known
facts especially about ordered vector spaces. Section~\ref{sec:definition}
contains the definition of (quasi\=/) ordered $^*$\=/algebras as well as some important examples,
both well-behaved and ill-behaved ones, and discusses the construction of the
uniform metric. After that, Section~\ref{sec:rad} describes radical ordered $^*$\=/algebras
which fulfil an additional compatibility between multiplication and order, and it is
shown that especially all symmetric ordered $^*$\=/algebras (those in which ``strictly''
positive elements have a multiplicative inverse) are radical.
The operations $\vee$ and $\wedge$, which describe
especially well-behaved suprema and infima of two commuting Hermitian elements,
are discussed in Section~\ref{sec:supsetc}. This leads to the definition of $\Phi^*$\=/algebras
which are essentially non-commutative generalizations of $\Phi$\=/algebras.
Square roots, which allow the construction of absolute values and thus of suprema and infima,
are examined in Section~\ref{sec:sqrt}. All this then leads to
the main Theorem~\ref{theorem:su} in Section~\ref{sec:su}, which essentially
states that in the uniformly complete case, the existence of suprema, infima, absolute values,
square roots and inverses are equivalent, and motivates Definition~\ref{definition:sus} of \Sus\=/algebras
as those complete ordered $^*$\=/algebras where these equivalent conditions are
fulfilled. Moreover, all the results obtained in the previous sections (like uniqueness
of the order or automatic continuity of unital $^*$\=/homomorphisms) then apply
especially to these \Sus\=/algebras.
Finally, in Section~\ref{sec:example}, examples of \Sus\=/algebras
of unbounded operators on a Hilbert space are constructed.
\section{Preliminaries} \label{sec:preliminaries}
The natural numbers are $\NN = \{1,2,3,\dots\}$, $\NN_0 \coloneqq \NN\cup\{0\}$ and 
the sets of real and complex numbers are denoted by $\RR$ and $\CC$, respectively.
If $X$ is a set, then $\id_X\colon X\to X$ is $x\mapsto \id_X(x):= x$. 
A \neu{quasi-order} on $X$ is a reflexive
and transitive relation, hence a \neu{partial order} is a quasi-order that is additionally
anti-symmetric. If $X$ and $Y$ are both endowed with a quasi-order $\lesssim$, then a map
$\Psi \colon X \to Y$ is called \neu{increasing} if $\Psi(x) \lesssim \Psi(\tilde{x})$ for all $x,\tilde{x}\in X$
with $x\lesssim \tilde{x}$. If $\Psi$ is injective and increasing and if conversely also $x\lesssim \tilde{x}$
holds for all $x,\tilde{x}\in X$ with $\Psi(x) \lesssim \Psi(\tilde{x})$, then $\Psi$ is called
an \neu{order embedding}.

A \neu{quasi-ordered vector space} is a real vector space $V$ endowed with a quasi-order
$\lesssim$ such that $u + w \lesssim v + w$ and $\lambda u \lesssim \lambda v$ hold for
all $u,v,w\in V$ with $u\lesssim v$ and all $\lambda \in {[0,\infty[}$. An 
\neu{ordered vector space} is a quasi-ordered vector space whose order is even a partial order,
which is then typically denoted by $\le$ instead of $\lesssim$.
For every quasi-ordered vector space $V$, the convex cone (non-empty subset of a real vector space
closed under addition and scalar multiplication with non-negative reals) of \neu{positive}
elements is $V^+ \coloneqq \set{v\in V}{v\gtrsim 0}$, and one can check that this
describes a one-to-one correspondence between convex cones in $V$ and orders on $V$ that turn $V$
into a quasi-ordered vector space. From this point of view, $V$ is even an ordered vector
space if and only if $V^+\cap (-V^+) = \{0\}$. 
A quasi-ordered vector space $V$ is called \neu{Archimedean} if it has the following
property: Whenever $v \lesssim \epsilon w$ holds for fixed $v \in V$ and $w\in V$
and all $\epsilon\in{]0,\infty[}$, then $v \lesssim 0$.

The real vector space $\Lin(V,W)$ of all 
linear maps $\Psi \colon V\to W$ between two quasi-ordered vector spaces is again a
quasi-ordered vector space by declaring the positive elements to be precisely the 
increasing linear maps. Because of this, the increasing linear maps are called \neu{positive}.
Note that a linear map $\Psi\colon V\to W$ is increasing if and only if $\Psi(v) \in W^+$
for all $v\in V^+$.

In ordered vector spaces it makes sense to discuss suprema and infima of arbitrary 
non-empty subsets.
A \neu{Riesz space} (or vector lattice) is an ordered vector space $\RieszSpace{R}$ in which 
suprema and infima of all pairs of elements exist. It is well-known that this is already
the case if $\sup \{r,-r\}$ exists for all $r \in \RieszSpace{R}$. The usual notations
$\vee$ and $\wedge$ for suprema and infima in Riesz spaces will be avoided and will
be reserved for a similar concept that is introduced later for ordered $^*$-algebras.
Endowing Riesz spaces with an additional algebraic structure leads to e.g.\ the concept of
\neu{$\Phi$-algebras}, which are Archimedean Riesz spaces $\R$ endowed with a multiplication 
that turns $\R$ into a real unital associative algebra such that $rs \in \R^+$ for all $r,s\in \R^+$
and $\inf\{rt, s\} = \inf\{tr, s\} = 0$ for all $r,s,t \in \R^+$ with $\inf \{ r, s \} = 0$.
Note that this property, applied twice with $t=r$ and $t=s$, especially implies that
$rs = \inf\{rs,rs\} = 0$ for all $r,s \in \R^+$ with $\inf \{ r, s \} = 0$, and from
$0 = \inf\big\{\sup\{r,-r\}-r,\sup\{r,-r\}+r\big\}$ for all $r\in \mathcal{R}$ it follows that $(\sup\{r,-r\})^2 = r^2$.
One remarkable result about $\Phi$\=/algebras is a representation theorem as algebras of extended
real-valued functions on compact Hausdorff spaces
\cite{henriksen.johnson:structureOfArchimedeanLatticeOrderedAlgebras}, and especially that
$\Phi$\=/algebras are automatically commutative. This shows that $\Phi$\=/algebras
are a good abstraction of lattice-ordered algebras of real-valued functions. There are also
many similar notions of Riesz spaces with multiplication that have been studied extensively,
most notably (almost) $f$\=/algebras. One essential takeaway is that a multiplication
on an Archimedean Riesz space is automatically commutative under very mild assumptions of 
compatibility with the order \cite{kouki.toumi.toumi:commutativityOfSomeArchimedeanOrderedAlgebras, 
scheffold:ffBanachverbandsalgebren, bernau.huijsmans:almostFAlgebrasAndDAlgebras,
buskes.vanRooij:AlmostFAlgebrasCommutativityandCSInequality}.
This unfortunately means that such algebras are not suitable for the description of
reasonably well-behaved non-commutative algebras of operators, which are the usual
non-commutative analog of algebras of functions.

A \neu{$^*$\=/vector space} is a complex vector space $V$ endowed with an antilinear involution
$\argument^*\colon V\to V$. An element $v$ of a $^*$\=/vector space $V$ is called \neu{Hermitian}
if $v=v^*$ and the real linear subspace of Hermitian elements in $V$ is denoted by $V_\Hermitian$.
Then $V = V_\Hermitian \oplus \I V_\Hermitian$ as a real vector space, and this decomposition
can explicitly be described as $v = \RE(v)+\I\,\IM(v)$ with $\RE(v) = \frac{1}{2}(v+v^*)$ and 
$\IM(v) = \frac{1}{2\I}(v-v^*)$ for all $v\in V$. The most obvious example of a $^*$\=/vector space
is of course given by $\CC$ with complex conjugation $\cc{\argument}$ as $^*$\=/involution.
The complex vector space $\Lin(V,W)$ of all linear maps $\Psi \colon V\to W$ between two
$^*$\=/vector spaces is again endowed with an antilinear involution defined by 
$\Psi^*(v) \coloneqq \Psi(v^*)^*$ for all $\Psi \in \Lin(V,W)$ and all $v\in V$.
A linear map $\Psi \colon V\to W$ thus is Hermitian if and only if $\Psi(v^*) = \Psi(v)^*$
holds for all $v\in V$, or equivalently, if and only if $\Psi(v) \in W_\Hermitian$ for all $v\in V_\Hermitian$.

A \neu{$^*$\=/algebra} is a unital associative complex algebra $\A$ which is also a
$^*$\=/vector space such that $(ab)^* = b^*a^*$ holds for all $a,b\in \A$. Its unit is denoted
by $\Unit$ or, more explicitly, by $\Unit_\A$, and is automatically Hermitian. Moreover, a
\neu{unital $^*$\=/homomorphism} between two $^*$\=/algebras is a unital homomorphism of algebras
which is additionally Hermitian, and a \neu{unital $^*$\=/subalgebra} of a $^*$\=/algebra is a unital subalgebra
that is stable under $\argument^*$. It is not explicitly required that $0\neq \Unit$,
but the only case in which this is not fulfilled is the not very interesting algebra $\{0\}$.
For a subset $S\subseteq \A$ of a $^*$\=/algebra $\A$, the \neu{commutant} $S' \coloneqq \set[\big]{a\in \A}{\forall_{s\in S}: sa=as}$
is a unital subalgebra, and even a unital $^*$\=/subalgebra if $S$ is stable under $\argument^*$.
If $S$ is commutative, then the \neu{bicommutant} $S''$ is again commutative and $S \subseteq S'' \subseteq S'$.
For example, the multiplicative inverse $a^{-1}$ of an invertible $a\in \A$ is in the
bicommutant of $a$. A \neu{$C^*$-(semi)norm} on a $^*$\=/algebra
$\A$ is a (semi)norm $\seminorm{}{\argument}$ for which $\seminorm{}{ab} \le \seminorm{}{a}\seminorm{}{b}$
and $\seminorm{}{a^*a} = \seminorm{}{a}^2$ hold for all $a,b\in \algebra{A}$,
hence especially $\seminorm{}{a^*}=\seminorm{}{a}$,
and a \neu{$C^*$-algebra} 
is a $^*$\=/algebra that is complete with respect to the topology of a $C^*$-norm.

A \neu{(quasi-)ordered $^*$\=/vector space} is a $^*$\=/vector space $V$ whose
real linear subspace of Hermitian elements $V_\Hermitian$ is endowed with an order
that turns it into a (quasi-)ordered vector space. The properties of ordered vector
spaces and linear functions between them, like being Archimedean or positive, apply 
to ordered $^*$\=/vector spaces in the obvious way, i.e. they refer
to the order on the Hermitian elements.
\section{Archimedean Ordered \texorpdfstring{$^*$\=/Algebras}{*-Algebras}} \label{sec:definition}
(Quasi-)ordered $^*$\=/algebras are defined analogously to (quasi-)ordered $^*$\=/vector spaces,
and have already been studied in e.g.~\cite{powers:SelfadjointAlgebrasOfUnboundedOperators2, schmuedgen:UnboundedOperatorAlgebraAndRepresentationTheory} as ``$^*$\=/algebras that
are equipped with an admissible wedge'' in the context of $^*$-algebras of unbounded
operators and, especially in the commutative case and up to complexification, as 
``rings equipped with a quadratic module'' in real algebraic geometry, see e.g.\ \cite{scheiderer:positivityAndSOSaGuide}
for a survey. However, it is important to point out that with respect to
quadratic modules, the term ``Archimedean'' unfortunately is used in a different way than
with respect to ordered vector spaces.
\begin{definition}
  A \neu{quasi-ordered $^*$\=/algebra} is a $^*$\=/algebra $\A$ whose real linear 
  subspace $\A_\Hermitian$ is endowed with a quasi-order $\lesssim$ such that
  \begin{equation}
    a + c \lesssim b +c
    \,,\quad\quad
    d^*a\,d \lesssim d^*b\,d
    \quad\quad\text{and}\quad\quad
    0 \lesssim \Unit
  \end{equation}
  hold for all $a,b,c\in \A_\Hermitian$ with $a\lesssim b$ and all $d\in \A$. An 
  \neu{ordered $^*$\=/algebra} is a quasi-ordered $^*$\=/algebra $\A$ for which $\A_\Hermitian$
  is partially ordered.
\end{definition}
As $^*$\=/algebras are required to have a unit, these axioms especially imply that every 
(quasi\=/)ordered $^*$\=/algebra is a (quasi\=/)ordered $^*$\=/vector space. Thus, a quasi-ordered 
$^*$\=/algebra $\A$ will be called Archimedean if $\A_\Hermitian$ is Archimedean as a quasi-ordered vector space
and we will especially be interested in positive Hermitian linear maps and positive unital $^*$\=/homomorphisms
between quasi-ordered $^*$-algebras.
Note that the set $\A_\Hermitian^+$ of positive Hermitian elements of $\A$ generates $\A_\Hermitian$
as a real vector space because $4a = (a+\Unit)^2 - (a-\Unit)^2$ holds for all $a\in \A_\Hermitian$ and because $(a \pm \Unit)^2 \in \A_\Hermitian^+$.
Moreover, one easily checks that
\begin{equation}
  \lambda a + \mu b \in \A_\Hermitian^+
  \,,\quad\quad
  d^*a\,d \in \A_\Hermitian^+
  \quad\quad\text{and}\quad\quad
  \Unit \in \A_\Hermitian^+
  \label{eq:admissibleCone}
\end{equation}
hold for all $a,b \in \A_\Hermitian^+$, $d\in \A$ and scalars $\lambda, \mu \in {[0,\infty[}$.
Conversely, if $\A$ is a $^*$\=/algebra and $\A_\Hermitian^+$ an arbitrary subset of $\A_\Hermitian$
that fulfils these three conditions \eqref{eq:admissibleCone}, then there is a unique order on $\A_\Hermitian$
such that $\A$ becomes a quasi-ordered $^*$-algebra whose set of positive Hermitian elements 
is precisely this set $\A_\Hermitian^+$. This order is given for $a,b\in \A_\Hermitian$ by $a\lesssim b$ iff $b-a\in\A_\Hermitian^+$.

Again, the most obvious example of an ordered $^*$\=/algebra is $\CC$ with the usual order on $\CC_\Hermitian \cong \RR$.
More interesting ones are:
\begin{example} \label{example:fun}
  Let $X$ be a non-empty set and $\CC^X$ the unital $^*$\=/algebra of all complex-valued functions
  on $X$ with the pointwise operations. Then $\CC^X$ with the pointwise order on its Hermitian elements,
  i.e. $f \le g$ if and only if $f(x) \le g(x)$ for all $x\in X$, is an Archimedean ordered $^*$\=/algebra.
  Consequently, all unital $^*$\=/subalgebras of $\CC^X$ with this pointwise order are
  Archimedean ordered $^*$\=/algebras as well.
\end{example}
Special cases of such ordered $^*$\=/algebras of functions are of course
those of continuous functions, denoted by $\Stetig(X)$ if $X$ is a topological space.
Another special case are polynomials, which demonstrate that there can be, in general,
many possible orders on the same $^*$\=/algebra:
\begin{example} \label{example:poly}
  Let $\CC[x_1,\dots,x_N]$ with $N\in \NN$ be the $^*$\=/algebra of complex polynomials in 
  $N$ Hermitian variables $x_1,\dots,x_N$, i.e.\ the $^*$\=/involution is given by complex conjugation of all coefficients.
  For every subset $S \subseteq \RR^N$, the \neu{$S$-pointwise order}
  on the Hermitian polynomials, i.e.
  $p \le q$ if and only if $p(s_1,\dots,s_N) \le q(s_1,\dots,s_N)$ for all $(s_1,\dots,s_N)\in S$,
  turns $\CC[x_1,\dots,x_N]$ into an Archimedean quasi-ordered $^*$\=/algebra, which is even
  an Archimedean ordered $^*$\=/algebra e.g.\ if $S$ has non-empty interior.
\end{example}
Non-commutative examples are provided by $^*$\=/algebras of operators, i.e. $O^*$\=/algebras:
\begin{example} \label{example:Ostar}
  Let $\Dom$ be a complex pre-Hilbert space with inner product $\skal{\argument}{\argument}$
  (antilinear in the first, linear in the second argument), then a linear endomorphism $a \colon \Dom \to \Dom$
  is said to be \neu{adjointable} if there exists a (necessarily unique) linear
  map $a^*\colon \Dom\to\Dom$ such that $\skal{a^*(\xi)}{\eta} = \skal{\xi}{a(\eta)}$
  holds for all $\xi,\eta\in \Dom$. In this case, $a^*$ is called the \neu{adjoint endomorphism}.
  The set of all adjointable linear endomorphisms on $\Dom$ is a $^*$\=/algebra and becomes
  an Archimedean ordered $^*$\=/algebra, denoted $\Adbar(\Dom)$, with the usual order of Hermitian operators on $\Dom$,
  i.e. $a \le b$ if and only if $\skal{\xi}{a(\xi)} \le \skal{\xi}{b(\xi)}$ for all $\xi \in \Dom$.
  Consequently, all unital $^*$\=/subalgebras of $\Adbar(\Dom)$ are Archimedean ordered $^*$\=/algebras as well.
  These are the $O^*$\=/algebras on $\Dom$, see e.g. the monograph 
  \cite{schmuedgen:UnboundedOperatorAlgebraAndRepresentationTheory} for more details.
\end{example}
In these first examples, the order is essentially determined by positive Hermitian linear functionals,
namely the evaluation functionals at points of $X$ and $S$ in Examples~\ref{example:fun} and
\ref{example:poly}, or the vector functionals $\Adbar(\Dom) \ni a \mapsto \skal{\xi}{a(\xi)} \in \CC$
in Example \ref{example:Ostar}. But there are also other types of examples:
\begin{example} \label{example:gen}
  Let $\algebra{A}$ be a unital $^*$\=/algebra, $G\subseteq \algebra{A}_\Hermitian$ and define
  $\genPos{G}$ as
  \begin{equation}
    \genPos{G} \coloneqq
    \set[\Big]{
      \sum\nolimits_{n=1}^{N} a_{n}^*\,g_n\,a_{n}
    }{
      N \in \NN;\,g_1,\dots,g_N\in G\cup\{\Unit\},\, a_{1},\dots,a_N \in \algebra{A}
    }\,.
  \end{equation}
  Then setting $\A^+_\Hermitian \coloneqq \genPos{G}$
  turns $\A$ into a quasi-ordered $^*$\=/algebra. This order on $\A$ will be called the
  \neu{order generated by $G$}, and $\genPos{G}$ is the smallest
  (with respect to inclusion) choice of positive Hermitian elements that contains $G$ and 
  with which $\A$ becomes a quasi-ordered $^*$\=/algebra. Especially for
  $G=\emptyset$ we write
  \begin{equation}
    \A^{++}_\Hermitian \coloneqq \genPos{\emptyset} = \set[\Big]{\sum\nolimits_{n=1}^N a_n^*a_n}{N\in \NN;\,a_1,\dots,a_N \in \A}
  \end{equation}
  and call the elements of $\A^{++}_\Hermitian$ \neu{algebraically positive}.
\end{example}
There are many strong results (called ``Positivstellensätze'') in real algebraic geometry that
link orders on $^*$\=/algebras that are induced by positive Hermitian linear functionals like
in Examples~\ref{example:fun}, \ref{example:poly} and \ref{example:Ostar} to orders generated
by a set of positive elements like in Example~\ref{example:gen}. These include the classical
Positivstellensatz of Krivine and Stengle for polynomials but also newer results for non-commutative
$^*$\=/algebras like \cite{schmuedgen:StrictPositivstellensatzForWeylAlgebra, schmuedgen:StrictPositivstellensatzForEnvelopingAlgebras}.

Choosing the order on a $^*$\=/algebra $\A$ for which $\A^+_\Hermitian = \A^{++}_\Hermitian$
yields a canonical way to construct a quasi-ordered $^*$\=/algebra out of any $^*$\=/algebra.
For example, the canonical order on $C^*$-algebras can be described like this. There is also
another canonical (yet pathological) choice, namely $\A^+_\Hermitian = \A_\Hermitian$.
For a general quasi-ordered $^*$\=/algebra $\A$ one clearly has 
$\A^{++}_\Hermitian \subseteq \A^+_\Hermitian \subseteq \A_\Hermitian$, but
it is well-known that these extreme cases may coincide:
\begin{example} \label{example:negativeUnit}
  Let $\mathbb{S} \coloneqq \set{z\in \CC}{\abs{z} = 1}$ and let $\A$ be the unital associative
  algebra $\A \coloneqq \Stetig(\mathbb{S})$, but endowed with the $^*$\=/involution $f^* \coloneqq
  \cc{\argument} \circ f \circ \tau$ for all $f\in \Stetig(\mathbb{S})$ (instead of the usual pointwise one $f^* \coloneqq \cc{\argument} \circ f$),
  where $\tau \colon \mathbb{S}\to \mathbb{S}$
  is $z \mapsto \tau(z) \coloneqq -z$. This way $\Stetig(\mathbb{S})$
  indeed becomes a $^*$\=/algebra. The usual norm $\seminorm{\max}{f} \coloneqq \max_{z\in \mathbb{S}} \abs{f(z)}$
  turns $\Stetig(\mathbb{S})$ into a Banach space and makes multiplication and $^*$\=/involution
  continuous. However, $\id_{\mathbb{S}}$ describes a function in $\Stetig(\mathbb{S})$
  for which $- \Unit_{\mathbb{S}} = (\id_{\mathbb{S}})^* \,\id_{\mathbb{S}} \in \A^{++}_\Hermitian$
  holds, thus $\A_\Hermitian = \A_\Hermitian^{++} - \A_\Hermitian^{++} = \A_\Hermitian^{++}$.
\end{example}
Finally, there is a standard example of a non-Archimedean ordered $^*$\=/algebra:
\begin{example} \label{example:ptlg}
  The commutative unital subalgebra
  \begin{equation}
    \A
    \coloneqq
    \set[\bigg]{ M_{a,b} \coloneqq \binom{\,a\,\,\,b\,}{\,0\,\,\,a\,} \in \CC^{2\times 2} }{a,b\in \CC}
  \end{equation}
  of the matrix algebra $\CC^{2\times 2}$ with elementwise complex conjugation as 
  $^*$\=/involution becomes a $^*$\=/algebra. Its algebraically positive elements are
  \begin{equation}
    \A^{++}_\Hermitian
    =
    \set[\big]{ M_{a,b} }{a,b\in \RR\text{ with } a>0\text{ or } a=b=0}
  \end{equation}
  and $\A$ with the algebraic order is an ordered $^*$\=/algebra, but not Archimedean because
  $M_{0,1} \le \epsilon M_{1,0}$ for all $\epsilon \in {]0,\infty[}$. Note especially
  that $M_{0,1}$ is a non-zero Hermitian element that squares to $0$.
\end{example}
Examples~\ref{example:negativeUnit} and \ref{example:ptlg} already
indicate that one should not expect to be able to prove many strong results for
quasi-ordered $^*$-algebras without any additional assumptions. Nevertheless,
there is at least a possibility to characterize the pathological elements and
in many cases one can eventually get rid of them by taking a suitable quotient.
This way it will become clear that an order on a $^*$-algebra can be seen as a
generalization of a $C^*$-norm. This follows essentially \cite{cimpric:representationTheoremForArchimedeanQuadraticModules},
but caution is advised because of the different usage of the term ``Archimedean'' there:
\begin{lemma} \label{lemma:ordersquare}
  Let $\A$ be a quasi-ordered $^*$\=/algebra, $a\in \A_\Hermitian$ and $\lambda \in {]0,\infty[}$,
  then $a^2 \lesssim \lambda^2 \Unit$ if and only if $-\lambda \Unit \lesssim a \lesssim \lambda \Unit$.
  If $\A$ is Archimedean, then this equivalence also holds for $\lambda = 0$.
\end{lemma}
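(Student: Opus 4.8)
The plan is to reduce both directions to statements about the positive cone $\A_\Hermitian^+$ and to exploit two elementary consequences of the axioms: first, that $d^*d \gtrsim 0$ for every $d \in \A$ (apply the sandwiching axiom to $0 \lesssim \Unit$, giving $0 \lesssim d^*\Unit d = d^*d$), so in particular every square $c^2 = c^*c$ of a Hermitian $c$ is positive; and second, that $b \gtrsim 0$ implies $d^*bd \gtrsim 0$ for all $d$. Throughout I write $u \coloneqq \lambda\Unit - a$ and $v \coloneqq \lambda\Unit + a$, both Hermitian and mutually commuting since they are polynomials in $a$.

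For the implication $-\lambda\Unit \lesssim a \lesssim \lambda\Unit \implies a^2 \lesssim \lambda^2\Unit$ with $\lambda > 0$, the hypotheses say precisely $u \gtrsim 0$ and $v \gtrsim 0$. The difficulty is that the desired conclusion $\lambda^2\Unit - a^2 = uv \gtrsim 0$ is a statement about a \emph{product} of two positive elements, which is not positive in general. I would resolve this by a symmetric sandwich: since $u$ is Hermitian and $v \gtrsim 0$, the axiom gives $uvu = u^*vu \gtrsim 0$, and likewise $vuv = v^*uv \gtrsim 0$. Because $u$ and $v$ commute, adding these yields $uvu + vuv = uv(u+v) = 2\lambda(\lambda^2\Unit - a^2)$, so $2\lambda(\lambda^2\Unit - a^2) \gtrsim 0$; scaling by $\tfrac{1}{2\lambda} > 0$ gives $a^2 \lesssim \lambda^2\Unit$.

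For the converse $a^2 \lesssim \lambda^2\Unit \implies -\lambda\Unit \lesssim a \lesssim \lambda\Unit$ with $\lambda > 0$, I would instead complete the square. The hypothesis is $\lambda^2\Unit - a^2 \gtrsim 0$, and the identity $2\lambda\,u = u^2 + (\lambda^2\Unit - a^2)$, checked by expanding $u^2 = \lambda^2\Unit - 2\lambda a + a^2$, exhibits $2\lambda u$ as a sum of the positive square $u^2$ and the positive element $\lambda^2\Unit - a^2$; hence $u \gtrsim 0$, i.e. $a \lesssim \lambda\Unit$. The symmetric identity $2\lambda\,v = v^2 + (\lambda^2\Unit - a^2)$ gives $v \gtrsim 0$, i.e. $-\lambda\Unit \lesssim a$.

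Finally, for $\lambda = 0$ in the Archimedean case, I would bootstrap from the positive-$\lambda$ equivalence. If $0 \lesssim a \lesssim 0$, then for every $\epsilon > 0$ one has $-\epsilon\Unit \lesssim 0 \lesssim a \lesssim 0 \lesssim \epsilon\Unit$ (using $\Unit \gtrsim 0$), so the $\lambda = \epsilon$ case gives $a^2 \lesssim \epsilon^2\Unit$; since $\epsilon^2$ ranges over all of ${]0,\infty[}$, the Archimedean property forces $a^2 \lesssim 0$. Conversely, if $a^2 \lesssim 0$, then $a^2 \lesssim \epsilon^2\Unit$ for every $\epsilon > 0$, so the $\lambda = \epsilon$ case gives $-\epsilon\Unit \lesssim a \lesssim \epsilon\Unit$; applying the Archimedean property to $a \lesssim \epsilon\Unit$ and to $-a \lesssim \epsilon\Unit$ separately yields $a \lesssim 0$ and $0 \lesssim a$. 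I expect the main obstacle to be the $(\Leftarrow)$ direction for $\lambda > 0$, where product-positivity must be obtained: the symmetric sandwich $uvu + vuv = 2\lambda\,uv$ is the key device, and everything else is routine bookkeeping with the cone axioms and the Archimedean property.
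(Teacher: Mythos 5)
Your proof is correct and follows essentially the same route as the paper: your identities $2\lambda u = u^2 + (\lambda^2\Unit - a^2)$ and $uvu + vuv = 2\lambda(\lambda^2\Unit - a^2)$ are exactly the two identities the paper's proof is built on, and the $\lambda=0$ bootstrap via the Archimedean property matches as well. No gaps.
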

\begin{proof}
  For $\lambda \in {]0,\infty[}$ this is essentially \cite[Lemma~3.1]{cimpric:representationTheoremForArchimedeanQuadraticModules},
  an immediate consequence of the identities
  \begin{align*}
    \lambda \Unit \pm a
    &=
    \frac{\lambda^2 \Unit - a^2 + (\lambda \Unit \pm a)^2}{2\lambda}
    \shortintertext{and}
    \lambda^2 \Unit - a^2
    &=
    \frac{(\lambda \Unit +a)(\lambda \Unit -a)(\lambda \Unit +a)+(\lambda \Unit -a)(\lambda \Unit +a)(\lambda \Unit -a)}{2\lambda}
    .
  \end{align*}
  So $a^2 \lesssim 0$ implies $-\epsilon \Unit \lesssim a \lesssim \epsilon \Unit$
  and $0 \lesssim a \lesssim 0$ implies $a^2 \lesssim \epsilon^2 \Unit$
  for all $\epsilon \in {]0,\infty[}$. If $\A$ is Archimedean, then this shows that 
  $a^2 \lesssim 0$ and $0 \lesssim a \lesssim 0$ are also equivalent.
\end{proof}
\begin{proposition} \label{proposition:nonilpotent}
  Let $\A$ be an Archimedean ordered $^*$\=/algebra and $a\in \A_\Hermitian$ nilpotent, then $a=0$.
\end{proposition}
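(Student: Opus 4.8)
The plan is to exploit the $\lambda=0$ clause of Lemma~\ref{lemma:ordersquare}, which in an Archimedean ordered $^*$\=/algebra says precisely that $b^2 \lesssim 0$ forces $0 \lesssim b \lesssim 0$, and hence $b=0$ by antisymmetry of the (genuine, not merely quasi-) order. So the whole argument will hinge on turning the hypothesis that $a$ is nilpotent into a chain of statements of the form ``a square is $\lesssim 0$'', and then descending.

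First I would pass to dyadic powers. Since $a$ is Hermitian, every power $a^m$ is again Hermitian (as $(a^m)^* = (a^*)^m = a^m$), and nilpotency provides some $k \in \NN_0$ with $a^{2^k}=0$, obtained simply by choosing $2^k$ at least as large as the nilpotency index. The point of using dyadic exponents is that consecutive such powers are related by squaring: writing $b_j \coloneqq a^{2^j}$ one has $b_0 = a$ and $b_j = b_{j-1}^2$ for $j \ge 1$, with each $b_j \in \A_\Hermitian$.

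Then I would run a downward induction on $j$ from $k$ to $0$, establishing $b_j = 0$ at every stage. The base case is the hypothesis $b_k = 0$. For the inductive step, assuming $b_j = 0$ for some $j \in \{1,\dots,k\}$, I note that $b_{j-1}^2 = b_j = 0 \lesssim 0$, so Lemma~\ref{lemma:ordersquare} in the Archimedean case with $\lambda = 0$ yields $0 \lesssim b_{j-1} \lesssim 0$ and therefore $b_{j-1} = 0$. Iterating all the way down to $j=0$ gives $a = b_0 = 0$, which is the claim.

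There is essentially no hard step once Lemma~\ref{lemma:ordersquare} is in hand; the only things to get right are the two places where the hypotheses are genuinely used. The Archimedean assumption enters exactly through the $\lambda=0$ clause and cannot be dropped: Example~\ref{example:ptlg} exhibits a nonzero Hermitian $M_{0,1}$ with $M_{0,1}^2 = 0$, which survives precisely because that algebra fails to be Archimedean. And antisymmetry of the order (i.e.\ that we work with an \emph{ordered}, not merely quasi-ordered, $^*$\=/algebra) is what lets me pass from $0 \lesssim b_{j-1} \lesssim 0$ to $b_{j-1} = 0$ at each stage.
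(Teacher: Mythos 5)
Your proof is correct and rests on exactly the same mechanism as the paper's: the $\lambda=0$ clause of Lemma~\ref{lemma:ordersquare} together with antisymmetry, applied repeatedly to halve the nilpotency exponent. The paper organizes the descent slightly differently (taking the minimal $n$ with $a^n=0$, showing $n$ must be odd, and then using $a^{n+1}=0$ to force $n=1$), whereas you descend through dyadic powers, but this is only a difference in bookkeeping.
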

\begin{proof}
  Let $n \in \NN$ be the minimal exponent for which $a^n = 0$. Then $n$ is odd because otherwise
  $0 \le a^{n/2} \le 0$ by the previous Lemma~\ref{lemma:ordersquare}, which contradicts
  minimality of $n$. But $a^{n+1} = 0$ now implies $0 \le a^{(n+1)/2} \le 0$, so $n=1$ by minimality
  of $n$.
\end{proof}
In Example~\ref{example:ptlg} we have seen that nilpotent Hermitian elements can indeed occur in non-Archimedean ordered $^*$\=/algebras.
Like in \cite{cimpric:representationTheoremForArchimedeanQuadraticModules} we define:
\begin{definition} \label{definition:inftyseminorm}
  Let $\A$ be a quasi-ordered $^*$\=/algebra, then define the map $\seminorm{\infty}{\argument} \colon \A \to [0,\infty]$,
  \begin{equation}
    a \mapsto \seminorm{\infty}{a} \coloneqq \inf \set[\big]{\lambda \in {]0,\infty[}}{a^*a \lesssim \lambda^2 \Unit}\,,
  \end{equation}
  where it is understood that the infimum of the empty set is $\infty$. An element
  $a\in \A$ is called \neu{uniformly bounded} if $\seminorm{\infty}{a} < \infty$ 
  and the set of all uniformly bounded elements in $\A$ is denoted by $\A^\bd$.
  The algebra $\A$ itself is called \neu{uniformly bounded} if $\A = \A^\bd$.
\end{definition}
Lemma~\ref{lemma:ordersquare} immediately gives an alternative description of $\seminorm{\infty}{\argument}$ 
on Hermitian elements:
\begin{proposition} \label{proposition:inftyseminorm}
  Let $\A$ be a quasi-ordered $^*$\=/algebra and $a\in \A_\Hermitian$, then
  \begin{equation}
    \seminorm{\infty}{a} = \inf\set[\big]{\lambda \in {]0,\infty[}}{-\lambda \Unit \lesssim a\lesssim \lambda \Unit}\,,
  \end{equation}
  where again the infimum of the empty set is $\infty$.
\end{proposition}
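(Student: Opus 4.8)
The plan is to reduce the asserted formula directly to Lemma~\ref{lemma:ordersquare}, exploiting that $a$ is Hermitian. First I would note that $a^*a = a^2$ because $a = a^*$, so the defining infimum of $\seminorm{\infty}{a}$ from Definition~\ref{definition:inftyseminorm} may be rewritten as the infimum over all $\lambda \in {]0,\infty[}$ for which $a^2 \lesssim \lambda^2 \Unit$ holds.

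Next I would compare the two index sets
\begin{equation*}
  S_1 \coloneqq \set[\big]{\lambda \in {]0,\infty[}}{a^2 \lesssim \lambda^2 \Unit}
  \quad\text{and}\quad
  S_2 \coloneqq \set[\big]{\lambda \in {]0,\infty[}}{-\lambda \Unit \lesssim a \lesssim \lambda \Unit}.
\end{equation*}
By Lemma~\ref{lemma:ordersquare}, for every fixed $\lambda \in {]0,\infty[}$ the condition $a^2 \lesssim \lambda^2 \Unit$ is equivalent to $-\lambda \Unit \lesssim a \lesssim \lambda \Unit$. Hence $S_1 = S_2$ as subsets of ${]0,\infty[}$, and consequently $\inf S_1 = \inf S_2$, which is precisely the claimed identity. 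The convention that the infimum of the empty set equals $\infty$ takes care of the degenerate case in which no such $\lambda$ exists, since then $S_1 = S_2 = \emptyset$ and both infima equal $\infty$.

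There is essentially no obstacle: all of the substance is already packaged in Lemma~\ref{lemma:ordersquare} together with the Hermiticity of $a$. In particular, because the infima range only over strictly positive $\lambda$, I expect to use neither the Archimedean hypothesis nor the $\lambda = 0$ case of that lemma, which is why the statement holds for an arbitrary quasi-ordered $^*$\=/algebra.
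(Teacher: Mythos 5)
Your argument is correct and is exactly the paper's own reasoning: the paper states this proposition without a separate proof, remarking only that Lemma~\ref{lemma:ordersquare} ``immediately gives'' it, which is precisely your identification of the two index sets via $a^*a = a^2$ and the $\lambda \in {]0,\infty[}$ case of that lemma. Your observation that the Archimedean hypothesis is not needed (since only strictly positive $\lambda$ occur) is also right and consistent with the proposition being stated for arbitrary quasi-ordered $^*$\=/algebras.
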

In the Archimedean case, these infima are even minima:
\begin{proposition} \label{proposition:inftyseminormIsmin}
  Let $\A$ be an Archimedean quasi-ordered $^*$\=/algebra and $a\in \A^\bd$, then
  \begin{equation}
    a^*a \lesssim \seminorm{\infty}{a}^2 \Unit\,.
  \end{equation}
  If even $a\in (\A^\bd)_\Hermitian$, then also
  \begin{equation}
    -\seminorm{\infty}{a} \Unit \lesssim a \lesssim \seminorm{\infty}{a} \Unit\,.
  \end{equation}
\end{proposition}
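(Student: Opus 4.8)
The plan is to show that the infimum defining $\seminorm{\infty}{a}$ is in fact attained; both displayed inequalities are precisely this statement. Set $\mu \coloneqq \seminorm{\infty}{a}$, which is finite since $a\in\A^\bd$, and note that $a^*a\in\A_\Hermitian$, so that $\seminorm{\infty}{a}$ is computed from the condition $a^*a \lesssim \lambda^2\Unit$.

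First I would observe that the set $S \coloneqq \set{\lambda \in {]0,\infty[}}{a^*a \lesssim \lambda^2\Unit}$ is upward closed: for $\lambda \in S$ and $\lambda' \ge \lambda$ one has $(\lambda'^2-\lambda^2)\Unit \gtrsim 0$, because $\Unit \gtrsim 0$ and the scalar is non-negative, so $a^*a \lesssim \lambda^2\Unit \lesssim \lambda'^2\Unit$ and thus $\lambda' \in S$. Since $\mu = \inf S$, any $\lambda > \mu$ fails to be a lower bound of $S$, so some $\lambda_0 \in S$ satisfies $\lambda_0 < \lambda$; upward closedness then yields $\lambda \in S$. Hence $a^*a \lesssim \lambda^2\Unit$ holds for every $\lambda > \mu$.

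The remaining step is to let $\lambda$ decrease to $\mu$ by invoking the Archimedean property, and this is where the only real care is needed. Writing $\lambda = \mu+\epsilon$ and subtracting $\mu^2\Unit$ gives $a^*a - \mu^2\Unit \lesssim \epsilon(2\mu+\epsilon)\Unit$ for all $\epsilon \in {]0,\infty[}$. The obstacle is that the right-hand side is not of the form $\epsilon w$ with $w$ fixed, owing to the quadratic term $\epsilon^2$, so the Archimedean condition does not apply verbatim. I would circumvent this by first restricting to $\epsilon \in {]0,1]}$, where $\epsilon(2\mu+\epsilon)\Unit \lesssim \epsilon(2\mu+1)\Unit$, and then extending to $\epsilon \ge 1$ via the case $\epsilon = 1$, using $(2\mu+1)\Unit \gtrsim 0$. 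Setting $v \coloneqq a^*a - \mu^2\Unit$ and the fixed element $w \coloneqq (2\mu+1)\Unit$, this yields $v \lesssim \epsilon w$ for all $\epsilon \in {]0,\infty[}$, so the Archimedean hypothesis forces $v \lesssim 0$, i.e.\ $a^*a \lesssim \seminorm{\infty}{a}^2\Unit$.

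For the second inequality, assuming additionally $a \in (\A^\bd)_\Hermitian$ gives $a^*a = a^2$, so the first part reads $a^2 \lesssim \mu^2\Unit$. Lemma~\ref{lemma:ordersquare}, in the Archimedean form that also covers the case $\mu = 0$, then translates this immediately into $-\mu\Unit \lesssim a \lesssim \mu\Unit$, which is the desired $-\seminorm{\infty}{a}\Unit \lesssim a \lesssim \seminorm{\infty}{a}\Unit$.
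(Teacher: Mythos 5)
Your proof is correct and follows essentially the same route as the paper: approximate $\seminorm{\infty}{a}^2\Unit$ from above, invoke the Archimedean property, and then apply Lemma~\ref{lemma:ordersquare} (including its $\lambda=0$ case) for the Hermitian statement. The only difference is that the paper writes the bound as $a^*a \lesssim (\seminorm{\infty}{a}^2+\epsilon)\Unit$ directly, which sidesteps the quadratic term $\epsilon^2$ that you handle (correctly) by restricting to $\epsilon\le 1$ and using upward closedness.
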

\begin{proof}
  From the definition of $\seminorm{\infty}{a}$ one sees that
  $a^*a \lesssim \big(\seminorm{\infty}{a}^2 + \epsilon\big) \Unit$
  for all $\epsilon \in {]0,\infty[}$, hence
  $a^*a \lesssim \seminorm{\infty}{a}^2 \Unit$ as $\A$ is Archimedean.
  If $a$ is even Hermitian, then this implies 
  $-\seminorm{\infty}{a} \Unit \lesssim a \lesssim \seminorm{\infty}{a} \Unit$
  by Lemma~\ref{lemma:ordersquare} again.
\end{proof}
The crucial property of $\seminorm{\infty}{\argument}$ is 
that it yields a $C^*$-(semi)norm on the uniformly bounded elements. Recall that
a \neu{$^*$\=/ideal} of a $^*$\=/algebra $\A$ is a linear subspace $\algebra{I}\subseteq \A$
that is stable under the $^*$\=/involution and fulfils $ba \in \algebra{I}$ for all $a\in \A$
and all $b\in\algebra{I}$ (thus also $ab = (b^* a^*)^* \in \algebra{I}$ for all $a\in \A$
and all $b\in\algebra{I}$).
\begin{proposition} \label{proposition:Cstar}
  Let $\A$ be a quasi-ordered $^*$\=/algebra, then $\A^\bd$ is a unital $^*$\=/subalgebra 
  of $\A$, and the restriction of $\seminorm{\infty}{\argument}$ to $\A^\bd$ is 
  a $C^*$-seminorm. Its kernel $\mathcal{K} \coloneqq \set[\big]{a\in \A}{\seminorm{\infty}{a}=0}$
  is a $^*$\=/ideal of $\A^\bd$. If $\A$ is Archimedean, then $\mathcal{K}$ is even
  a $^*$\=/ideal of whole $\A$, and if $\A$ is an Archimedean ordered $^*$\=/algebra, then
  $\mathcal{K} = \{0\}$ so that $\seminorm{\infty}{\argument}$ is a $C^*$\=/norm on $\A^\bd$.
\end{proposition}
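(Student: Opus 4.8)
The plan is to verify the three defining inequalities of a $C^*$\=/seminorm one at a time, read off as a byproduct that $\A^\bd$ is closed under the algebraic operations, and only afterwards treat the kernel $\mathcal{K}$. Homogeneity $\seminorm{\infty}{\lambda a} = \abs{\lambda}\seminorm{\infty}{a}$ is immediate from $(\lambda a)^*(\lambda a) = \abs{\lambda}^2 a^*a$ and the substitution $\mu = \abs{\lambda}\nu$ in the defining infimum. The $C^*$\=/property $\seminorm{\infty}{a^*a} = \seminorm{\infty}{a}^2$ is, somewhat surprisingly, essentially free: for any $a$ the element $a^*a$ is Hermitian and satisfies $0 = a^*0\,a \lesssim a^*\Unit a = a^*a$, so by Proposition~\ref{proposition:inftyseminorm} the lower bound is automatic and $\seminorm{\infty}{a^*a} = \inf\set{\mu\in{]0,\infty[}}{a^*a \lesssim \mu\Unit}$; substituting $\mu = \lambda^2$ in the definition of $\seminorm{\infty}{a}$ shows the right-hand side equals $\seminorm{\infty}{a}^2$.

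Submultiplicativity and the triangle inequality rest on elementary order manipulations, using only that for any $\lambda > \seminorm{\infty}{a}$ one has $a^*a \lesssim \lambda^2\Unit$. For $\seminorm{\infty}{ab} \le \seminorm{\infty}{a}\seminorm{\infty}{b}$ I would fix $\lambda > \seminorm{\infty}{a}$ and $\kappa > \seminorm{\infty}{b}$, apply the axiom $x \lesssim y \Rightarrow d^*xd \lesssim d^*yd$ with $d=b$ to get $b^*a^*ab \lesssim \lambda^2 b^*b \lesssim \lambda^2\kappa^2\Unit$, and read off $\seminorm{\infty}{ab}^2 = \seminorm{\infty}{(ab)^*(ab)} \le \lambda^2\kappa^2$ via the $C^*$\=/property. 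The triangle inequality is the main obstacle and needs the noncommutative Cauchy--Schwarz estimate obtained from
\[
0 \lesssim (\sqrt{r}\,a - r^{-1/2}b)^*(\sqrt{r}\,a - r^{-1/2}b) = r\,a^*a - (a^*b+b^*a) + r^{-1}b^*b,
\]
valid for every $r\in{]0,\infty[}$. Hence $a^*b+b^*a \lesssim r\,a^*a + r^{-1}b^*b$, so $(a+b)^*(a+b) \lesssim (1+r)a^*a + (1+r^{-1})b^*b \lesssim \big[(1+r)\lambda^2 + (1+r^{-1})\kappa^2\big]\Unit$. Optimizing the scalar bound over $r$ (the minimum is at $r=\kappa/\lambda$ and equals $(\lambda+\kappa)^2$) yields $\seminorm{\infty}{a+b} \le \lambda+\kappa$ directly from the definition, and letting $\lambda \downarrow \seminorm{\infty}{a}$, $\kappa\downarrow\seminorm{\infty}{b}$ finishes it. These computations also show $\A^\bd$ is stable under sums and products; together with $\seminorm{\infty}{\Unit} \le 1$ and the identity $\seminorm{\infty}{a^*}=\seminorm{\infty}{a}$ — which I would extract from the previous two properties by the usual argument $\seminorm{\infty}{a}^2 = \seminorm{\infty}{a^*a} \le \seminorm{\infty}{a^*}\seminorm{\infty}{a}$ and symmetry — this makes $\A^\bd$ a unital $^*$\=/subalgebra and $\seminorm{\infty}{\argument}$ a $C^*$\=/seminorm on it.

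For the kernel, note first that as the kernel of a $C^*$\=/seminorm $\mathcal{K}$ is automatically a linear subspace, that $\seminorm{\infty}{a^*}=\seminorm{\infty}{a}$ makes it $^*$\=/stable, and that submultiplicativity gives $\seminorm{\infty}{ab} \le \seminorm{\infty}{a}\seminorm{\infty}{b} = 0$ for $a\in\A^\bd$, $b\in\mathcal{K}$ (and symmetrically), so $\mathcal{K}$ is a $^*$\=/ideal of $\A^\bd$ with no extra hypotheses. To upgrade this to an ideal of all of $\A$ when $\A$ is Archimedean, I would invoke Proposition~\ref{proposition:inftyseminormIsmin}: $b\in\mathcal{K}$ forces $b^*b \lesssim 0$, whence for arbitrary $d\in\A$ the axiom gives $(bd)^*(bd) = d^*b^*b\,d \lesssim 0 \lesssim \mu\Unit$ for all $\mu>0$, i.e. $bd\in\mathcal{K}$; thus $\mathcal{K}$ is a right ideal, and $^*$\=/stability promotes it to a two-sided $^*$\=/ideal of $\A$.

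Finally, if $\A$ is an Archimedean ordered $^*$\=/algebra, then for Hermitian $a\in\mathcal{K}$ Proposition~\ref{proposition:inftyseminorm} together with the Archimedean property yields $0 \lesssim a \lesssim 0$, so $a=0$ by antisymmetry; for general $a\in\mathcal{K}$ both $\RE(a)$ and $\IM(a)$ lie in $\mathcal{K}\cap\A_\Hermitian = \{0\}$ by the triangle inequality, hence $a=0$ and $\mathcal{K}=\{0\}$, making $\seminorm{\infty}{\argument}$ a genuine $C^*$\=/norm on $\A^\bd$. I expect the triangle inequality, through the Cauchy--Schwarz estimate and the optimization over $r$, to be the only genuinely non-routine step; everything else reduces to the defining axioms and the two earlier propositions.
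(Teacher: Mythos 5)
Most of your argument tracks the paper's proof closely: the homogeneity, the $C^*$\=/identity via Proposition~\ref{proposition:inftyseminorm} and the substitution $\mu=\lambda^2$, the submultiplicativity via $b^*a^*a\,b\lesssim\lambda^2 b^*b$, and the triangle inequality (your Cauchy--Schwarz estimate optimized at $r=\kappa/\lambda$ is exactly the paper's explicit identity with coefficients $1+\mu/\lambda$ and $1+\lambda/\mu$) are all correct, as are the treatment of the kernel as an ideal of $\A^\bd$, the upgrade to an ideal of all of $\A$ in the Archimedean case, and the final reduction to Hermitian elements.

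However, there is a genuine gap at the $^*$\=/stability of $\A^\bd$. You propose to obtain $\seminorm{\infty}{a^*}=\seminorm{\infty}{a}$ from $\seminorm{\infty}{a}^2=\seminorm{\infty}{a^*a}\le\seminorm{\infty}{a^*}\seminorm{\infty}{a}$ ``and symmetry''. That inequality only bounds $\seminorm{\infty}{a^*}$ from below; the symmetric inequality $\seminorm{\infty}{a^*}^2\le\seminorm{\infty}{a}\seminorm{\infty}{a^*}$ yields the upper bound only after dividing by $\seminorm{\infty}{a^*}$, which presupposes $\seminorm{\infty}{a^*}<\infty$ --- i.e.\ precisely that $a^*\in\A^\bd$, which is what has to be proved. (In a $C^*$\=/algebra this issue is invisible because the norm is finite everywhere; here finiteness is the whole point.) Your submultiplicativity estimate was moreover only derived for factors already known to lie in $\A^\bd$. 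The missing ingredient is an argument showing that $a^*a\lesssim\lambda^2\Unit$ forces $aa^*\lesssim\lambda^2\Unit$; the paper does this with the computation
\begin{equation*}
0\;\lesssim\;\frac{(\lambda^2\Unit-aa^*)^2}{\lambda^2}\;=\;\lambda^2\Unit-2aa^*+a\,\frac{a^*a}{\lambda^2}\,a^*\;\lesssim\;\lambda^2\Unit-aa^*,
\end{equation*}
using $a\,(a^*a/\lambda^2)\,a^*\lesssim a\Unit a^*=aa^*$. The same step is needed to see that $\mathcal{K}$ is stable under $\argument^*$ (take $\lambda$ arbitrarily small), on which both your promotion of the right ideal to a two-sided $^*$\=/ideal and your decomposition $a=\RE(a)+\I\,\IM(a)$ rely. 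Once this is inserted, the rest of your proof goes through.
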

\begin{proof}
  The claims for general, not necessarily Archimedean $\A$ have been proven in \cite[Thm.~3.2]{cimpric:representationTheoremForArchimedeanQuadraticModules}.
  For convenience of the reader, the details are given here as well:
  
  From the definition of $\seminorm{\infty}{\argument}$
  it is clear that $\Unit \in \A^\bd$ with $\seminorm{\infty}{\Unit}\le 1$
  and that $\alpha a \in\A^\bd$ with $\seminorm{\infty}{\alpha a} = \abs{\alpha} \seminorm{\infty}{a}$
  for all $a\in \A^\bd$ and all $\alpha \in \CC\backslash\{0\}$, as well as for $\alpha = 0$ because clearly
  $\seminorm{\infty}{0} = 0$.
  Now given $a,b\in \A^\bd$ and $\lambda, \mu \in {]0,\infty[}$ such that $a^*a \lesssim \lambda^2 \Unit$ and
  $b^*b \lesssim \mu^2 \Unit$, then
  \begin{align*}
    (\lambda + \mu)^2 \Unit - &(a+b)^*(a+b)
    = \\
    &=
    \bigg( 1 + \frac{\mu}{\lambda} \bigg)\big( \lambda^2 \Unit - a^*a \big)
    +
    \bigg( 1 + \frac{\lambda}{\mu} \bigg)\big( \mu^2 \Unit - b^*b \big)
    +
    \bigg( \frac{\sqrt{\lambda}}{\sqrt{\mu}} b - \frac{\sqrt{\mu}}{\sqrt{\lambda}} a \bigg)^*
    \bigg( \frac{\sqrt{\lambda}}{\sqrt{\mu}} b - \frac{\sqrt{\mu}}{\sqrt{\lambda}} a \bigg)
  \end{align*}
  is positive, so $\seminorm{\infty}{a+b} \le \lambda + \mu$. Moreover,
  $(ab)^*(ab) = b^*a^*a\,b \lesssim \lambda^2 b^*b \lesssim (\lambda \mu)^2 \Unit$ 
  shows that $\seminorm{\infty}{ab} \le \lambda \mu$.
  Thus $\seminorm{\infty}{a+b} \le \seminorm{\infty}{a} + \seminorm{\infty}{b}$
  and $\seminorm{\infty}{ab} \le \seminorm{\infty}{a} \seminorm{\infty}{b}$,
  and especially $a+b, ab \in \A^\bd$. So $\A^\bd$
  is a unital subalgebra of $\A$ and $\seminorm{\infty}{\argument}$ a submultiplicative
  seminorm on $\A^\bd$.
  
  In order to show that $\A^\bd$ is stable under $\argument^*$, let $a\in \A^\bd$ be given as well as $\lambda \in {]0,\infty[}$
  such that $a^*a \lesssim \lambda^2 \Unit$. Then
  \begin{equation*}
    0
    \lesssim
    \frac{ (\lambda^2 \Unit - aa^* )^2 }{ \lambda^2 }
    =
    \lambda^2 \Unit - 2 aa^* + a\,\frac{a^*a}{\lambda^2}\,a^*
    \lesssim
    \lambda^2 \Unit - aa^*
  \end{equation*}
  shows that $\seminorm{\infty}{a^*} \le \lambda$. It follows that $\A^\bd$ is a unital $^*$\=/subalgebra
  of $\A$ and $\seminorm{\infty}{a^*} = \seminorm{\infty}{a}$ for all $a\in \A$ because $\argument^*$
  is an involution. 
  
  The $C^*$-property also is fulfilled: The increasing and continuous map 
  $\argument^2\colon {[0,\infty[}\to {[0,\infty[}$ commutes with infima so that
  Definition~\ref{definition:inftyseminorm}
  yields $\seminorm{\infty}{a}^2 = \inf \set[\big]{\lambda^2 \in {]0,\infty[}}{a^*a \lesssim \lambda^2 \Unit}$
  for all $a\in \A^\bd$, which coincides with $\seminorm{\infty}{a^*a}$ by Proposition~\ref{proposition:inftyseminorm}.
  So $\seminorm{\infty}{\argument}$ is indeed a $C^*$\=/seminorm on $\A^\bd$ and its kernel $\mathcal{K}$
  is a $^*$\=/ideal of $\A^\bd$. % Bis hierhin könnte raus
  
  Now given $a, b \in \A$ such that $\seminorm{\infty}{a} = 0$, 
  then $(ab)^*(ab) = b^*a^*a\,b \lesssim \epsilon^2 b^*b$ holds for all $\epsilon \in {]0,\infty[}$,
  which implies $(ab)^*(ab) \lesssim 0$ if $\A$ is additionally Archimedean. In this case
  $\seminorm{\infty}{ab} = 0$ so that $\mathcal{K}$ is even a $^*$\=/ideal of $\A$.
  
  Finally, assume that $\A$ is even an Archimedean ordered $^*$\=/algebra and let 
  $a\in \mathcal{K}$ be given. If $a$ is Hermitian, then $a=0$ by Proposition~\ref{proposition:inftyseminormIsmin}.
  Otherwise $a$ can be expressed as the linear combination $a = \RE(a) + \I \IM(a)$
  of Hermitian elements $\RE(a), \IM(a) \in \mathcal{K}$, which are both $0$ so that again $a=0$.
\end{proof}
So we see that uniformly bounded Archimedean ordered $^*$\=/algebras with the norm 
$\seminorm{\infty}{\argument}$ are pre-$C^*$\=/algebras 
(i.e. $^*$\=/algebras endowed with a $C^*$-norm). Using some standard results about $C^*$\=/algebras,
e.g. the possibility to represent every $C^*$\=/algebra as a $^*$\=/algebra of bounded operators
on a Hilbert space by the Gelfand–Naimark theorem, one can also show that the converse is true as well: 
every pre-$C^*$\=/algebra with the canonical order inherited from its completion to a $C^*$\=/algebra 
is a uniformly bounded Archimedean ordered $^*$\=/algebra. It will be interesting to extend 
the concept of completeness of a $C^*$\=/algebra to general Archimedean ordered $^*$\=/algebras.
While $\seminorm{\infty}{\argument}$ is finite only on the uniformly bounded
elements, and thus does not describe a norm on all Archimedean ordered $^*$\=/algebras,
it still allows to construct a translation-invariant metric:
\begin{definition}
  Let $\A$ be an Archimedean ordered $^*$\=/algebra, then the \neu{uniform metric}
  on $\A$ is defined as the map $\metric_\infty\colon \A\times \A \to {[0,\infty[}$,
  \begin{equation}
    (a,b) \mapsto \metric_\infty(a,b) \coloneqq \min\big\{ \seminorm{\infty}{a-b}, 1 \big\}
    .
  \end{equation}
  All metric notions will always refer to this uniform metric, and $\A$ is especially
  called \neu{uniformly complete} if it is complete with respect to $\metric_\infty$. 
\end{definition}
Note that it is easy to check that $\metric_\infty$ is indeed a translation-invariant metric.

In this language, $C^*$\=/algebras are the uniformly bounded and uniformly complete Archimedean ordered $^*$\=/algebras.
However, neither the product, nor the left or right multiplication with a fixed element are
continuous in the general case: Consider the $^*$\=/algebra $\CC[x]$ of polynomials in one Hermitian
element $x$ like in Example~\ref{example:poly} with the $\RR$-pointwise ordering. Then 
$\lim_{n\to \infty} \Unit / n = 0$ but the sequence
$\NN \ni n \mapsto x / n \in \CC[x]_\Hermitian$ does not converge.
Nevertheless, this metric is still sufficiently well-behaved for some purposes. For example,
it is easy to see that every positive unital $^*$\=/homomorphism between Archimedean
ordered $^*$\=/algebras is automatically continuous with respect to the uniform metric,
because this metric is induced by the order. Moreover:
\begin{lemma} \label{lemma:csersatz}
  Let $\A$ be a quasi-ordered $^*$\=/algebra and $a,b\in \A$, then
  \begin{equation}
    a^* b + b^*a \lesssim \chi^{-2} a^*a + \chi^2 b^*b
  \end{equation}
  holds for all $\chi\in {]0,\infty[}$.
\end{lemma}
\begin{proof}
  This follows from $0 \lesssim (\chi^{-1} a - \chi b)^*(\chi^{-1} a - \chi b) = \chi^{-2} a^*a - a^*b - b^*a + \chi^2 b^*b$.
\end{proof}
\begin{proposition} \label{proposition:closedstuff}
  Let $\A$ be an Archimedean ordered $^*$\=/algebra and $S\subseteq \A_\Hermitian$, then 
  the space $\A_\Hermitian$ of Hermitian elements, the space $\A^\bd$ of uniformly bounded
  elements, the commutant $S'$, the bicommutant $S''$ and the sets $\set{a\in \A_\Hermitian}{a\le s\text{ for all }s\in S}$
  and $\set{a\in \A_\Hermitian}{a\ge s\text{ for all }s\in S}$ are closed in $\A$
  with respect to the uniform metric.
\end{proposition}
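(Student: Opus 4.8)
The plan is to treat the six sets in increasing order of difficulty, reducing everything to two facts established earlier: that the $^*$\=/involution is an isometry for $\metric_\infty$ (Proposition~\ref{proposition:Cstar} gives $\seminorm{\infty}{a^*}=\seminorm{\infty}{a}$, so that $\RE$, $\IM$ and $a\mapsto a-a^*$ are continuous), and that $-\lambda\Unit\le h\le\lambda\Unit$ whenever $h\in\A_\Hermitian$ and $\seminorm{\infty}{h}<\lambda$ (Proposition~\ref{proposition:inftyseminormIsmin}). For the two easy spaces: $\A_\Hermitian$ is the kernel of the continuous map $a\mapsto a-a^*$, hence closed; and if $a_n\to a$ with $a_n\in\A^\bd$, then $\seminorm{\infty}{a-a_n}<1<\infty$ for some $n$, so $a-a_n\in\A^\bd$, and since $\A^\bd$ is a linear subspace (Proposition~\ref{proposition:Cstar}) also $a=a_n+(a-a_n)\in\A^\bd$.

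For the order sets it suffices to show that $\A_\Hermitian^+$ is closed, since then $\set{a}{a\ge s\text{ for all }s\in S}=\bigcap_{s\in S}(s+\A_\Hermitian^+)$ and $\set{a}{a\le s\text{ for all }s\in S}=\bigcap_{s\in S}(s-\A_\Hermitian^+)$ are intersections of translates (and negatives) of a closed set, hence closed. To see $\A_\Hermitian^+$ is closed, let $a_n\to a$ with $a_n\ge 0$. Then $a\in\A_\Hermitian$ as that space is closed, and for every $\epsilon>0$ eventually $\seminorm{\infty}{a-a_n}<\epsilon$, whence $a\ge a_n-\epsilon\Unit\ge-\epsilon\Unit$ by Proposition~\ref{proposition:inftyseminormIsmin}; thus $-a\le\epsilon\Unit$ for all $\epsilon>0$, and the Archimedean property forces $a\ge 0$.

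The main obstacle is the commutant and bicommutant, precisely because left and right multiplication are \emph{not} continuous, so one cannot pass to the limit in $sa_n=a_ns$ directly. I would first reduce to showing that $\{s\}'$ is closed for a single Hermitian $s$: then $S'=\bigcap_{s\in S}\{s\}'$ is closed because $S\subseteq\A_\Hermitian$, and since $S'$ is then a $^*$\=/subalgebra its Hermitian part spans it, so $S''=\bigcap_{h\in S'\cap\A_\Hermitian}\{h\}'$ is again an intersection of commutants of Hermitian elements. For fixed Hermitian $s$ the set $\{s\}'$ is a $^*$\=/subalgebra, so by continuity of $\RE$ and $\IM$ it is enough to prove: if $h_n\to h$ with $h_n=h_n^*$ and $sh_n-h_ns=0$, then $sh-hs=0$.

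Here is the delicate point. Writing $b_n\coloneqq h-h_n$, which is Hermitian with $\seminorm{\infty}{b_n}\to 0$, the element $c\coloneqq sh-hs=sb_n-b_ns$ is anti-Hermitian and \emph{independent of $n$}, so $\I c$ is Hermitian; the estimate must therefore avoid reproducing $c^*c$ circularly, which is why I bound the fixed Hermitian element $\I c=2\RE(\I s b_n)$ by the fixed positive element $s^2$ rather than trying to make $sb_n$ small. Applying Lemma~\ref{lemma:csersatz} to $\I sb_n=(\rho s)^*\big((\I/\rho)b_n\big)$ and using $b_n^2\le\seminorm{\infty}{b_n}^2\Unit$ (Proposition~\ref{proposition:inftyseminormIsmin} with Lemma~\ref{lemma:ordersquare}), one gets, after absorbing the two free parameters into a single $\kappa>0$,
\begin{equation*}
  \pm\,\I c\le\kappa\,s^2+\kappa^{-1}\seminorm{\infty}{b_n}^2\,\Unit\qquad\text{for all }\kappa>0\text{ and all }n.
\end{equation*}
A double use of the Archimedean property then finishes: fixing $\kappa$ and letting $n\to\infty$ kills the second summand, giving $\pm\,\I c\le\kappa\,s^2$ for every $\kappa>0$, and letting $\kappa\to 0$ forces $\pm\,\I c\le 0$, hence $c=0$. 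I expect this last estimate—turning discontinuity of multiplication into a controllable order inequality against the fixed element $s^2$—to be the crux of the whole proposition.
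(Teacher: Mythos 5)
Your proof is correct and follows essentially the same route as the paper: reduce to Hermitian elements via the continuous projections $\RE$ and $\IM$, handle the order sets by the $\epsilon\Unit$-perturbation plus the Archimedean property, and treat the commutant by bounding the fixed anti-Hermitian commutator via Lemma~\ref{lemma:csersatz} together with $b_n^2\le\seminorm{\infty}{b_n}^2\Unit$ before invoking the Archimedean property (the paper packages your two parameters into one by taking $\chi=\sqrt{\epsilon}$ and bounding against $\epsilon(\Unit+s^2)$, but the estimate is the same). Your argument for the closedness of $\A^\bd$ — using that any element at finite $\seminorm{\infty}{\argument}$-distance from a bounded element is itself bounded because $\A^\bd$ is a subspace — is a slightly slicker packaging of the paper's step, but not a different method.
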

\begin{proof}
  Using that $\seminorm{\infty}{\RE(a)} \le \seminorm{\infty}{a}$ and
  $\seminorm{\infty}{\IM(a)} \le \seminorm{\infty}{a}$ hold for all $a\in \A^\bd$,
  it is easy to check that the $\RR$\=/linear projectors $\RE,\IM\colon \A \to \A$ are continuous
  and thus $\A_\Hermitian = \IM^{-1}(\{0\})$ is closed.
  
  Now consider a sequence $(a_n)_{n\in \NN}$ in $\A_\Hermitian$ that converges against some 
  $\hat{a} \coloneqq \lim_{n\to \infty}a_n \in \A_\Hermitian$ and let
  $\epsilon \in {]0,\infty[}$ be given, then there exists an $n\in \NN$ such that
  $\seminorm{\infty}{\hat{a}-a_n} \le \epsilon$, i.e.\ 
  $-\epsilon \Unit \le \hat{a} - a_n \le \epsilon \Unit$ by Proposition~\ref{proposition:inftyseminormIsmin}.
  If all $a_n$ with $n\in \NN$ are uniformly bounded, then this shows that $\hat{a}$ is also uniformly bounded,
  so $\A^\bd \cap \A_\Hermitian$ and $\A^\bd = \RE^{-1}(\A^\bd \cap \A_\Hermitian) \cap \IM^{-1}(\A^\bd \cap \A_\Hermitian)$
  are closed in $\A$.
  
  Moreover, let $s\in \A_\Hermitian$ be given. If $a_n \le s$ for $n\in \NN$, then
  $\hat{a} \le a_n + \epsilon \Unit \le s + \epsilon \Unit$, and if $a_n \ge s$ for $n\in \NN$,
  then $\hat{a} \ge a_n - \epsilon \Unit \ge s - \epsilon \Unit$. If $a_n \in \{s\}'$ for $n\in \NN$, then 
  \begin{equation*}
    \I (\hat{a} s - s\hat{a}) 
    = 
    (\hat{a}-a_n) (\I s) + (-\I s)(\hat{a}-a_n)
    \le 
    \epsilon^{-1} (\hat{a}-a_n)^2 + \epsilon s^2 
    \le 
    \epsilon (\Unit + s^2)
  \end{equation*}
  by Lemma~\ref{lemma:csersatz} with $\chi = \sqrt{\epsilon}$ and
  Lemma~\ref{lemma:ordersquare}. As $\{-s\}' = \{s\}'$, the same
  estimate holds with $-s$ in place of $s$, so
  $-\epsilon(\Unit + s^2) \le \I (\hat{a} s - s\hat{a}) \le \epsilon(\Unit+s^2)$.
  
  Using that $\A$ is Archimedean, these estimates show that 
  $\set{a\in \A_\Hermitian}{a\le s}$, $\set{a\in \A_\Hermitian}{a\ge s}$, 
  and $\{s\}' \cap \A_\Hermitian$ are closed in $\A$. As 
  $\{s\}' = \RE^{-1}(\{s\}' \cap \A_\Hermitian) \cap \IM^{-1}(\{s\}' \cap \A_\Hermitian)$
  also $\{s\}'$ is closed in $\A$. Consequently, intersections
  of such sets, and especially $\set{a\in \A_\Hermitian}{a\le s\text{ for all }s\in S}$,
  $\set{a\in \A_\Hermitian}{a\ge s\text{ for all }s\in S}$ and $S'$ are closed.
  From $S\subseteq \A_\Hermitian$ it follows that $S'$ is stable under $\argument^*$,
  and thus $S'' = (S'\cap \A_\Hermitian)'$ is also closed in $\A$.
\end{proof}
\section{Radical and Symmetric Ordered \texorpdfstring{$^*$-Algebras}{*-Algebras}} \label{sec:rad}
The only compatibility between order and multiplication that has been discussed so far is the axiom
of quasi-ordered $^*$\=/algebras $\A$ that $b^*a\,b \in \A_\Hermitian^+$ for all $a\in \A_\Hermitian^+$
and all $b\in \A$. If the order is sufficiently nice (especially antisymmetric and Archimedean),
then this is indeed enough to guarantee that the elements of $\A$ essentially behave like
adjointable endomorphisms on a pre-Hilbert space, which can be made rigorous by a representation
theorem like in \cite{schoetz:preprintGelfandNaimarkTheorems}. However, it is well-known
that such $^*$\=/algebras of (unbounded) adjointable endomorphisms can still exhibit some unexpected
behaviour because Hermitian endomorphisms need not be essentially self-adjoint.
Because of this, it will be necessary to introduce another compatibility 
between order and multiplication that gurantees that commuting elements essentially
behave like complex-valued functions (see again \cite{schoetz:preprintGelfandNaimarkTheorems}):
\begin{definition} \label{definition:radical}
  Let $\A$ be an ordered $^*$\=/algebra, then an element $a\in \A_\Hermitian$ is called
  \neu{coercive} if there exists an $\epsilon \in {]0,\infty[}$ such that $a \ge \epsilon \Unit$.
  An ordered $^*$\=/algebra $\A$ is called \neu{radical} if the following
  is fulfilled: Whenever $a,b\in \A_\Hermitian$ are two commuting elements such that $a$ is coercive
  and $ab \ge 0$, then $b\ge 0$.
\end{definition}
One obvious example of radical Archimedean ordered $^*$\=/algebras are function algebras
like in Example~\ref{example:fun}. Non-commutative examples will be constructed later on.
Some basic observations about radical Archimedean ordered $^*$\=/algebras are:
\begin{proposition} \label{proposition:radicalOrderSquare}
  Let $\A$ be a radical Archimedean ordered $^*$\=/algebra, $a,b\in \A_\Hermitian$ commuting
  and $a\ge 0$. Then $b^2 \le a^2$ is equivalent to $-a \le b \le a$.
\end{proposition}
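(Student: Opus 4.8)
The plan is to prove the two implications separately, in both cases turning $a$ into a coercive element by passing to $c \coloneqq a + \epsilon \Unit$ with $\epsilon \in {]0,\infty[}$, applying the radical property of Definition~\ref{definition:radical} to a suitable \emph{manifestly} positive product, and finally removing $\epsilon$ by the Archimedean property. Throughout, all of $a$, $b$, $c$, $c\pm b$ commute and are Hermitian, which is what makes the radical property and the identities $(c\pm b)^* = c \pm b$ and $(c-b)(c+b) = c^2 - b^2$ available.

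For the implication $b^2 \le a^2 \Rightarrow -a \le b \le a$ I would first note that $c^2 = a^2 + 2\epsilon a + \epsilon^2 \Unit \ge a^2 \ge b^2$, so that $c^2 - b^2 \ge 0$. The key observation is then that $2c = (c+b) + (c-b)$ is coercive and that $(c+b)\,2c = (c+b)^2 + (c^2 - b^2)$ is a sum of a Hermitian square and the positive element $c^2 - b^2$, hence positive. Since $c+b$ commutes with the coercive element $2c$, the radical property yields $c + b \ge 0$; the symmetric computation with $c - b$ in place of $c + b$ gives $c - b \ge 0$. Thus $-(a + \epsilon\Unit) \le b \le a + \epsilon\Unit$ for every $\epsilon \in {]0,\infty[}$, and the Archimedean property (applied to $b - a$ and to $-b-a$ against $w = \Unit$) gives $-a \le b \le a$.

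For the converse $-a \le b \le a \Rightarrow b^2 \le a^2$ I can no longer assume anything about $c^2 - b^2$, so the argument has to produce it. Here $c + b = (a+b) + \epsilon\Unit$ and $c - b = (a-b)+\epsilon\Unit$ are both coercive. Using the defining axiom $d^* p\, d \ge 0$ for $p \ge 0$ with $p = c + b \ge 0$ and $d = c - b$, I obtain $(c-b)(c+b)(c-b) \ge 0$, which by commutativity equals $(c^2 - b^2)(c-b)$. Since $c - b$ is coercive and commutes with $c^2 - b^2$, the radical property now gives $c^2 - b^2 \ge 0$, i.e.\ $a^2 - b^2 \ge -(2\epsilon a + \epsilon^2\Unit) \ge -\epsilon(2a+\Unit)$ for all $\epsilon \in {]0,1]}$. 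As $2a + \Unit \ge 0$, this bound extends to all $\epsilon \in {]0,\infty[}$, and the Archimedean property applied to $b^2 - a^2$ against $w = 2a + \Unit$ yields $b^2 \le a^2$.

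The one genuinely delicate point in both directions is that the product of two commuting positive elements need not be positive in a general ordered $^*$\=/algebra, so I cannot factor $a^2 - b^2 = (a-b)(a+b)$ and argue naively. The whole trick is to manufacture a product that is visibly positive — either as a sum of a square and a known positive element, or through a congruence $d^* p\, d$ — and whose other factor is coercive, so that the radical property can transfer positivity from the product to the factor I actually want. The $\epsilon$\=/shift is exactly what supplies the needed coercivity, and the Archimedean property is what lets me discard it at the end.
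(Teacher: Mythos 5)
Your proof is correct and follows essentially the same route as the paper: shift $a$ to the coercive $c=a+\epsilon\Unit$, exhibit a manifestly positive product with a coercive commuting factor, apply the radical property, and remove $\epsilon$ via the Archimedean axiom --- the forward direction uses literally the same identity as the paper's. The only (harmless) deviation is in the converse, where the paper divides the coercive factor $2c$ out of the symmetrized sum $(c+b)(c-b)(c+b)+(c-b)(c+b)(c-b)=2c(c^2-b^2)$, whereas you divide the coercive factor $c-b$ (coercive thanks to the hypothesis $b\le a$) out of the single congruence $(c-b)(c+b)(c-b)=(c^2-b^2)(c-b)$; both are valid.
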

\begin{proof}
  One argues like in the proof of Lemma~\ref{lemma:ordersquare}:
  First assume that $b^2 \le a^2$, then also $b^2 \le (a+\epsilon \Unit)^2$ for all $\epsilon \in {]0,\infty[}$,
  so
  \begin{equation*}
    2(a+\epsilon \Unit)(a+\epsilon \Unit \pm b) 
    =
    (a+\epsilon \Unit)^2 - b^2 + ( a+ \epsilon \Unit \pm b )^2
    \ge
    0
    .
  \end{equation*}
  As $\A$ is radical, this shows that $a+\epsilon \Unit \pm b \ge 0$, and then $a\pm b\ge 0$
  because $\A$ is also Archimedean; so $-a \le b \le a$.
  Conversely, if $-a \le b \le a$, then also $-(a+\epsilon \Unit) \le b \le a+\epsilon \Unit$ for all $\epsilon \in {]0,1]}$,
  so
  \begin{equation*}
    2(a+\epsilon\Unit)\big((a+\epsilon \Unit)^2 - b^2\big)
    =
    (a+\epsilon\Unit+b)(a+\epsilon\Unit-b)(a+\epsilon\Unit+b)
    +
    (a+\epsilon\Unit-b)(a+\epsilon\Unit+b)(a+\epsilon\Unit-b)
    \ge
    0
    .
  \end{equation*}
  As $\A$ is radical, this shows that $(a+\epsilon \Unit)^2 - b^2 \ge 0$,
  so $b^2 \le (a+\epsilon \Unit)^2 \le a^2 + \epsilon(2a+\Unit)$.
  It follows that $b^2 \le a^2$ because $\A$ is Archimedean.
\end{proof}
\begin{corollary} \label{corollary:radicalproducts}
  If $\A$ is a radical Archimedean ordered $^*$\=/algebra and $a,b\in \A_\Hermitian^+$ commute,
  then $ab \ge 0$.
\end{corollary}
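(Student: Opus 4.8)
The plan is to derive the inequality $ab \ge 0$ from the order-square characterization already available in Proposition~\ref{proposition:radicalOrderSquare}. The starting point is the polarization-type identity
\[
  4ab = (a+b)^2 - (a-b)^2,
\]
which holds precisely because $a$ and $b$ commute (the cross terms $ab$ and $ba$ coincide, so they add up rather than cancel). Thus it suffices to establish the single order inequality $(a-b)^2 \le (a+b)^2$, since this immediately yields $4ab \ge 0$ and hence $ab \in \A_\Hermitian^+$.

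To prove $(a-b)^2 \le (a+b)^2$ I would apply Proposition~\ref{proposition:radicalOrderSquare} with $a+b$ playing the role of the distinguished positive element and $a-b$ playing the role of the element whose square is to be compared. The hypotheses are readily checked: $a+b$ and $a-b$ commute, because $(a+b)(a-b) = a^2 - b^2 = (a-b)(a+b)$ using $ab = ba$; and $a+b \ge 0$ since $a, b \in \A_\Hermitian^+$ and $\A_\Hermitian^+$ is a convex cone. The proposition then tells me that $(a-b)^2 \le (a+b)^2$ is equivalent to the two-sided estimate $-(a+b) \le a-b \le a+b$.

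The key observation — and the only place where the positivity of both $a$ and $b$ is used — is that these two inequalities are nothing but $a \ge 0$ and $b \ge 0$ in disguise: adding $a+b$ to $-(a+b) \le a-b$ gives $0 \le 2a$, while subtracting $a$ from $a-b \le a+b$ gives $0 \le 2b$. Both hold by assumption, so the two-sided estimate is satisfied, the proposition delivers $(a-b)^2 \le (a+b)^2$, and the polarization identity finishes the argument. The main (mild) obstacle is simply spotting the right substitution into Proposition~\ref{proposition:radicalOrderSquare}; once the elements $a \pm b$ are chosen, verifying commutativity and translating the two-sided estimate into the hypotheses $a, b \ge 0$ is routine, and no further appeal to the Archimedean property or to square roots (which are only developed later) is needed.
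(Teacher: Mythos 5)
Your proof is correct and is essentially identical to the paper's: the paper also deduces $(a-b)^2 \le (a+b)^2$ from $-(a+b) \le a-b \le a+b$ via Proposition~\ref{proposition:radicalOrderSquare} and then concludes with the polarization identity $4ab = (a+b)^2 - (a-b)^2$. Your write-up merely makes explicit the routine verifications (commutativity of $a\pm b$, positivity of $a+b$) that the paper leaves implicit.
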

\begin{proof}
  From $-(a+b) \le a-b \le a+b$ we get $(a-b)^2 \le (a+b)^2$, so $4ab = (a+b)^2 - (a-b)^2 \ge 0$.
\end{proof}
Note that even in the matrix $^*$\=/algebra $\CC^{2\times 2}$ with the composition
of complex conjugation and transposition as $^*$\=/involution and the usual order on the 
Hermitian matrices, which is a $C^*$-algebra and certainly should be regarded as one of 
the most well-behaved ordered $^*$\=/algebras, there exist Hermitian (but not commuting) matrices $a$ and $b$
with $0 \le a \le b$ which do not fulfil $a^2 \le b^2$. A standard example is
\begin{equation*}
  a = \binom{\,2\,\,\,\,2\,}{\,2\,\,\,\,2\,}
  \quad\quad\text{and}\quad\quad
  b = \binom{\,6\,\,\,\,0\,}{\,0\,\,\,\,3\,}
  .
\end{equation*}
Because of this, one should not expect an analog of Proposition~\ref{proposition:radicalOrderSquare}
to be fulfilled for non-commutating Hermitian elements in well-behaved examples.
Using results from real algebraic geometry, the above Corollary~\ref{corollary:radicalproducts} can
be improved significantly, which demonstrates the importance of the radical-property:
\begin{proposition} \label{proposition:polynomialcalculus}
  Let $\A$ be a radical Archimedean ordered $^*$\=/algebra, $a_1,\dots,a_N \in \A_\Hermitian$ with $N\in \NN$
  pairwise commuting and $p_1,\dots,p_M \in \RR[x_1,\dots,x_N] \cong \CC[x_1,\dots,x_N]_\Hermitian$ with $M\in \NN$
  polynomials fulfilling
  $p_m(a_1,\dots,a_N) \in \A_\Hermitian^+$ for all $m\in \{1,\dots,M\}$. Define the associated semialgebraic set
  \begin{equation}
    S \coloneqq \set[\big]{s\in \RR^N}{p_m(s_1,\dots,s_N) \ge 0 \text{ for all }m\in \{1,\dots,M\}}
    .
  \end{equation}
  Then the unital $^*$\=/homomorphism $\CC[x_1,\dots,x_N] \ni q \mapsto q(a_1,\dots,a_N) \in \A$ is positive
  with respect to the $S$-pointwise order on $\CC[x_1,\dots,x_N]$.
\end{proposition}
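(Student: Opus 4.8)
The plan is to reduce the statement to the classical (commutative) Positivstellensatz of Krivine and Stengle and then to use the radical property to transport the resulting algebraic identity through the evaluation homomorphism $\varphi\colon \CC[x_1,\dots,x_N] \to \A$, $q \mapsto q(a_1,\dots,a_N)$. First I would record two bookkeeping facts. Since the $a_n$ pairwise commute and are Hermitian, $\varphi$ sends real polynomials (the Hermitian elements of $\CC[x_1,\dots,x_N]$) to pairwise commuting Hermitian elements of $\A$, and it sends every sum of squares in $\RR[x_1,\dots,x_N]$ into $\A_\Hermitian^+$, because $\varphi(r)^2 \in \A_\Hermitian^{++} \subseteq \A_\Hermitian^+$ for real $r$. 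Combining this with the hypothesis $\varphi(p_m) = p_m(a_1,\dots,a_N) \in \A_\Hermitian^+$ and with Corollary~\ref{corollary:radicalproducts}, applied inductively to finite products of pairwise commuting positive elements, I obtain that $\varphi$ maps the whole preordering $T$ generated by $p_1,\dots,p_M$ into $\A_\Hermitian^+$: each generator $\sigma_e \prod_{m\in e} p_m$ of $T$ is a product of commuting positive elements, hence positive, and sums of positive elements are positive.

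Next I would reduce to a strict positivity statement. Let $q \in \RR[x_1,\dots,x_N]$ be $S$\=/pointwise positive, i.e.\ $q \ge 0$ on $S$; it suffices to prove $\varphi(q) + \epsilon \Unit \ge 0$ for every $\epsilon \in {]0,\infty[}$, since then $-\varphi(q) \le \epsilon \Unit$ for all such $\epsilon$ and the Archimedean property of $\A$ forces $\varphi(q) \ge 0$. So fix $\epsilon > 0$ and set $\tilde q \coloneqq q + \epsilon$, which is strictly positive pointwise on $S$. By Stengle's strict Positivstellensatz there exist $g,h \in T$ with $g\,\tilde q = 1 + h$; crucially this version needs no compactness of $S$, so it applies to the arbitrary semialgebraic set at hand. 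Applying $\varphi$ and writing $b \coloneqq \varphi(\tilde q) = \varphi(q) + \epsilon \Unit \in \A_\Hermitian$ and $c \coloneqq \varphi(g) \in \A_\Hermitian^+$, I get the identity $c\,b = \Unit + \varphi(h)$ inside the commutative $^*$\=/subalgebra generated by the $a_n$, with $\varphi(h) \in \A_\Hermitian^+$ by the first step. Hence $c\,b \ge \Unit$, so $c\,b$ is coercive.

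It then remains to deduce $b \ge 0$ from the data $c \ge 0$, $b$ Hermitian, $c\,b$ coercive, everything commuting, and here the radical property plays the role of a ``division''. Since $c \ge 0$ and $b^2 \in \A_\Hermitian^+$ commute, Corollary~\ref{corollary:radicalproducts} gives $(c\,b)\,b = c\,b^2 \ge 0$. As $c\,b$ is coercive and commutes with the Hermitian element $b$, the defining property of radical ordered $^*$\=/algebras in Definition~\ref{definition:radical}, applied to the coercive element $c\,b$ and to $b$, yields $b \ge 0$, that is $\varphi(q) + \epsilon \Unit \ge 0$. Together with the Archimedean reduction this finishes the proof. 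The only genuinely external ingredient is the Krivine–Stengle Positivstellensatz, and I expect the main subtlety to be exactly the passage from the polynomial identity $g\,\tilde q = 1 + h$ to positivity of $b$ itself: this step would fail without the radical hypothesis and is precisely where the coercivity of $c\,b$ together with Corollary~\ref{corollary:radicalproducts} and Definition~\ref{definition:radical} is used.
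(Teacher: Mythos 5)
Your proof is correct and follows the same overall strategy as the paper: reduce to strict positivity on $S$ via the Archimedean property, invoke the Krivine--Stengle Positivstellensatz, push the certificate through the evaluation homomorphism using Corollary~\ref{corollary:radicalproducts}, and finish with the radical property. The one genuine (if small) divergence is in how the certificate is exploited. The paper first massages the classical identity $r_1 q = 1 + r_2$ into the form $(1+r_1)q = 1+r_2$ (by multiplying with $q$ and adding), so that the coercive element to which Definition~\ref{definition:radical} is applied is $\Unit + r_1(a_1,\dots,a_N)$ itself and the conclusion $q(a_1,\dots,a_N)\ge 0$ is immediate. You instead keep the raw certificate $g\,\tilde q = 1+h$, observe that $c\,b = \Unit + \varphi(h)$ is coercive, and then need one extra application of Corollary~\ref{corollary:radicalproducts} (to $c\ge 0$ and $b^2\ge 0$) to verify the hypothesis $(c\,b)\,b \ge 0$ before applying the radical property to the pair $(c\,b,\,b)$. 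Both routes are valid; yours trades the certificate-rewriting trick for one additional use of the corollary, and the paper's version makes the coercive factor visible already at the level of polynomials. Your reduction of the preordering $T$ into $\A_\Hermitian^+$ via induction on commuting products is also exactly what the paper uses implicitly.
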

\begin{proof}
  First let $q \in \CC[x_1,\dots,x_N]_\Hermitian$ be given such that $q(s_1,\dots,s_N) > 0$ for all $s \in S$.
  By the Positivstellensatz of Krivine and Stengle, there exist two polynomials
  \begin{equation*}
    r_1,r_2 \in \genPos[\Big]{\set[\Big]{\prod\nolimits_{m=1}^M (p_m)^{\sigma(m)}}{\sigma(1),\dots,\sigma(M) \in \{0,1\}}}
  \end{equation*}
  such that $(1+r_1) q = 1+r_2$ (this version of the Positivstellensatz
  can be obtained from the more traditional formulation $r_1 q = 1+r_2$ by the well-known trick of
  multiplying with $q$ and adding the identities, which yields $(1+r_1+r_2) q = 1+r_2 + r_1 q^2$).
  This shows that $\big(\Unit + r_1(a_1,\dots,a_N)\big) q(a_1,\dots,a_N) = \Unit+r_2(a_1,\dots,a_N)$
  and from the previous Corollary~\ref{corollary:radicalproducts} it follows that $r_1(a_1,\dots,a_N), r_2(a_1,\dots,a_N) \in \A_\Hermitian^+$,
  so $q(a_1,\dots,a_N)\in \A_\Hermitian^+$ because $\A$ is radical.
  
  For general $q \in \CC[x_1,\dots,x_N]_\Hermitian$ which is $S$-pointwise positive, this shows that
  $q(a_1,\dots,a_N) + \epsilon \Unit = (q+\epsilon)(a_1,\dots,a_N) \in \A_\Hermitian^+$
  for all $\epsilon \in {]0,\infty[}$,
  and thus $q(a_1,\dots,a_N) \in \A_\Hermitian^+$ because $\A$ is Archimedean.
\end{proof}
From this proof it also becomes clear that $\CC[x_1,\dots,x_N]$ with the algebraic order is not radical
for $N\ge 2$,
because there exist real polynomials that are (strictly) pointwise positive on whole $\RR^N$
but not sums of squares, hence not algebraically positive. The first paragraph thus fails for this algebra
and $a_n \coloneqq x_n$, $M\coloneqq1$, $p_1 \coloneqq 1$.

In order to construct radical Archimedean ordered $^*$\=/algebras, it will be helpful to
discuss algebras in which many elements are invertible. Recall that a $^*$\=/algebra $\A$ is
called symmetric if $a\pm \I\Unit$ has a multiplicative inverse for all $a\in \A_\Hermitian$,
or equivalently if $\Unit+a^2$ is invertible for all $a\in \A_\Hermitian$. However, there
are also similar, but non-equivalent notions where one demands that e.g.\ 
$\Unit+a^*a$ is invertible for all $a\in \A$ or that $\Unit+ \sum_{n=1}^N a_n^*a_n$
is invertible for all $a_1,\dots,a_N \in \A$ with $N\in \NN$,
see \cite[Chap.~9.8]{palmer:BanachAlgebras2} for a comparison. In ordered $^*$\=/algebras, there is another, even stronger possibility:
\begin{definition}
  An ordered $^*$\=/algebra $\A$ is called \neu{symmetric} if every coercive element of $\A_\Hermitian$
  has a multiplicative inverse.
\end{definition}
In order to prove that every symmetric Archimedean ordered $^*$\=/algebra is radical, we need some 
preliminary lemmas:
\begin{lemma} \label{lemma:inverseordering}
  Let $\A$ be an ordered $^*$\=/algebra, $a\in \A_\Hermitian$ coercive and $\epsilon \in {]0,\infty[}$
  such that $a \ge \epsilon \Unit$, then $a^{-1}$ is Hermitian, positive and uniformly bounded with 
  $\seminorm{\infty}{a^{-1}} \le \epsilon^{-1}$.
\end{lemma}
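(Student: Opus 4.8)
The plan is to establish the three assertions---that $a^{-1}$ is Hermitian, that it is positive, and that $\seminorm{\infty}{a^{-1}} \le \epsilon^{-1}$---in that order, treating the invertibility of $a$ (so that $a^{-1}$ exists in the bicommutant of $a$) as part of the hypothesis. For Hermiticity I would apply $\argument^*$ to the defining identities $a\,a^{-1} = \Unit = a^{-1}a$; since $a^* = a$ and $(xy)^* = y^*x^*$, this shows that $(a^{-1})^*$ is again a two-sided inverse of $a$, whence $(a^{-1})^* = a^{-1}$ by uniqueness of inverses.

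For positivity, note first that $a \ge \epsilon\Unit \ge 0$, using $\Unit \ge 0$ and $\epsilon \in {]0,\infty[}$. The sandwich axiom of quasi-ordered $^*$\=/algebras, in the form $0 \le c \Rightarrow 0 \le d^*c\,d$, applied with $c = a$ and $d = a^{-1}$, gives $(a^{-1})^*a\,a^{-1} \ge 0$; by the Hermiticity just proven this expression equals $a^{-1}a\,a^{-1} = a^{-1}$, so indeed $a^{-1} \ge 0$.

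The main work is the norm bound. By Definition~\ref{definition:inftyseminorm} it suffices to show $(a^{-1})^*(a^{-1}) = (a^{-1})^2 \le \epsilon^{-2}\Unit$. The organizing identity is that sandwiching $\epsilon^{-2}a^2 - \Unit$ by $a^{-1}$ reproduces exactly the quantity we want to control, namely $(a^{-1})(\epsilon^{-2}a^2 - \Unit)(a^{-1}) = \epsilon^{-2}\Unit - (a^{-1})^2$, so by the sandwich axiom the whole bound reduces to proving $\epsilon^{-2}a^2 - \Unit \ge 0$. I expect this to be the one genuine obstacle: $\epsilon^{-2}a^2 - \Unit$ factors as the product of the two commuting positive elements $\epsilon^{-1}a - \Unit$ and $\epsilon^{-1}a + \Unit$, and positivity of products of commuting positive elements is precisely what is \emph{not} available in a general ordered $^*$\=/algebra (it is exactly the content of Corollary~\ref{corollary:radicalproducts}, which needs the radical hypothesis we cannot assume here). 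The way around this is to avoid the factorization and instead complete the square,
\[
  \epsilon^{-2}a^2 - \Unit = (\epsilon^{-1}a - \Unit)^2 + 2(\epsilon^{-1}a - \Unit),
\]
which exhibits $\epsilon^{-2}a^2 - \Unit$ as the sum of the Hermitian square $(\epsilon^{-1}a - \Unit)^2 \ge 0$ and the element $2(\epsilon^{-1}a - \Unit)$, the latter being positive precisely because $a \ge \epsilon\Unit$. Thus $\epsilon^{-2}a^2 - \Unit \ge 0$, sandwiching yields $(a^{-1})^2 \le \epsilon^{-2}\Unit$, and hence $\seminorm{\infty}{a^{-1}} \le \epsilon^{-1}$, which in particular shows $a^{-1} \in \A^\bd$.

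The crux is therefore the completing-the-square step: the naive reduction to $a^2 \ge \epsilon^2\Unit$ via commuting positive factors is unavailable without the radical property, and it is the positivity of the shift $\epsilon^{-1}a - \Unit$, guaranteed by coercivity, that lets us write the required element as a square plus a positive element rather than as a product.
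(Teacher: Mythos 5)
Your proof is correct and follows essentially the same route as the paper: both establish Hermiticity and positivity via the identity $a^{-1}=(a^{-1})^*a\,a^{-1}$ together with the sandwich axiom, and both obtain the norm bound from a completing\-/the\-/square identity that converts coercivity into a statement about squares. The only (cosmetic) difference is that the paper writes $a=\epsilon^{-1}a^2-\epsilon^{-1}(a-\epsilon\Unit)^2-(a-\epsilon\Unit)\le\epsilon^{-1}a^2$ and sandwiches to get $0\le a^{-1}\le\epsilon^{-1}\Unit$, invoking Proposition~\ref{proposition:inftyseminorm}, whereas you show $\epsilon^{-2}a^2-\Unit\ge 0$ and sandwich to bound $(a^{-1})^*a^{-1}$ directly from Definition~\ref{definition:inftyseminorm}; your remark that the naive factorization into commuting positive factors is unavailable without the radical property is exactly the right diagnosis.
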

\begin{proof}
  We have $a^{-1} = (a\,a^{-1})^* a^{-1} = (a^{-1})^* a\, a^{-1} \in \A_\Hermitian^+$, and
  $a = \epsilon^{-1} a^2 - \epsilon^{-1}(a-\epsilon\Unit)^2 - (a -\epsilon \Unit) \le \epsilon^ {-1} a^2$
  implies $a^{-1} = a^{-1} a\,a^{-1} \le \epsilon^{-1} a^{-1} a^2a^{-1} = \epsilon^{-1} \Unit$
  so that $\seminorm{\infty}{a^{-1}} \le \epsilon^{-1}$ by Proposition~\ref{proposition:inftyseminorm}.
\end{proof}
\begin{lemma} \label{lemma:approximateSqrt}
  Let $\A$ be an ordered $^*$\=/algebra and $a\in (\A^\bd)^+_\Hermitian$,
  then there exist two sequences of polynomials $(p_n )_{n\in \NN}$ and 
  $(q_n )_{n\in \NN}$ in $\RR[x] \cong \CC[x]_\Hermitian$
  such that the identity $a +q_n(a) = p_n^2(a)$ and the estimates 
  $0 \le p_n(a)$ and $0 \le q_n(a) \le \Unit / n$ hold.
\end{lemma}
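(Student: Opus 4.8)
The plan is to obtain each $p_n$ as a polynomial approximation of the function $x\mapsto\sqrt{x+\delta_n}$ for a small $\delta_n>0$, and to certify all the required order inequalities by a single Archimedean Positivstellensatz, so that they transfer from $\RR[x]$ to $\A$. First I would fix a scalar bound for $a$: since $a\in(\A^\bd)^+_\Hermitian$, the definition of $\seminorm{\infty}{a}$ supplies some $\lambda\in{]0,\infty[}$ with $a^2=a^*a\le\lambda^2\Unit$, whence $-\lambda\Unit\le a\le\lambda\Unit$ by Lemma~\ref{lemma:ordersquare}; combined with $a\ge 0$ this gives $0\le a\le\lambda\Unit$. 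The entire argument then lives on the interval $[0,\lambda]$.

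The conceptual core is the quadratic module
\begin{equation*}
  M \coloneqq \Sigma^2 + x\,\Sigma^2 + (\lambda - x)\,\Sigma^2 \subseteq \RR[x],
\end{equation*}
where $\Sigma^2\subseteq\RR[x]$ denotes the sums of squares, together with the observation that the substitution $x\mapsto a$ maps $M$ into $\A_\Hermitian^+$. Indeed, for a sum of squares $\sigma=\sum_k s_k^2$ with $s_k\in\RR[x]$ each $s_k(a)$ is Hermitian, so $\sigma(a)=\sum_k s_k(a)^*s_k(a)\ge 0$, and moreover $a\,\sigma(a)=\sum_k s_k(a)^*\,a\,s_k(a)\ge 0$ and $(\lambda\Unit-a)\sigma(a)=\sum_k s_k(a)^*(\lambda\Unit-a)s_k(a)\ge 0$, because $a$ and $\lambda\Unit-a$ are positive and the $s_k(a)$ commute with $a$. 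It is precisely here that one must use the quadratic module and \emph{not} the full preordering: in a general (non-radical) ordered $^*$\=/algebra the product $a(\lambda\Unit-a)$ of the two commuting positive elements need not be positive, so no admissible certificate may carry the factor $x(\lambda-x)$ as a multiplier. This restriction is what I expect to be the main subtlety of the proof.

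Next I would verify that $M$ is Archimedean, which is exactly what makes Putinar's Positivstellensatz applicable. This rests on the elementary identity
\begin{equation*}
  x(\lambda - x) = \tfrac{1}{\lambda}\big( x(\lambda - x)^2 + (\lambda - x)\,x^2 \big) \in M
\end{equation*}
and its consequence
\begin{equation*}
  C - x^2 = (C-\lambda^2) + \lambda(\lambda - x) + x(\lambda - x) \in M \qquad\text{for all } C\ge\lambda^2,
\end{equation*}
which exhibits a bound for $x^2$ inside $M$. Granting Archimedeanity, Putinar's theorem yields that every $p\in\RR[x]$ with $p>0$ on $[0,\lambda]$ lies in $M$, and hence $p(a)\ge 0$ by the previous paragraph.

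It remains to produce the polynomials. For $n\in\NN$ set $\delta_n\coloneqq 1/(2n)$; the map $x\mapsto\sqrt{x+\delta_n}$ is continuous and bounded below by $\sqrt{\delta_n}$ on $[0,\lambda]$, so Weierstrass approximation provides $p_n\in\RR[x]$ with $\abs{p_n(x)-\sqrt{x+\delta_n}}<\eta_n$ on $[0,\lambda]$, where $\eta_n$ is chosen small enough (in particular smaller than $\sqrt{\delta_n}$ and than $\delta_n/(2\sqrt{\lambda+\delta_n})$, and with $2\eta_n\sqrt{\lambda+\delta_n}+\eta_n^2<1/(2n)$). A short estimate using $\sqrt{x+\delta_n}-\sqrt{x}\ge\delta_n/(2\sqrt{\lambda+\delta_n})$ then gives the strict inequalities $p_n(x)>0$ and $0<p_n(x)^2-x<1/n$ for all $x\in[0,\lambda]$. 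Defining $q_n\coloneqq p_n^2-x\in\RR[x]$, the identity $a+q_n(a)=p_n(a)^2$ holds by construction, and applying the Positivstellensatz to the three polynomials $p_n$, $q_n$ and $\Unit/n-q_n$, each of which is strictly positive on $[0,\lambda]$, yields $0\le p_n(a)$ as well as $0\le q_n(a)\le\Unit/n$, as required. Arranging the two-sided strict bound $0<p_n^2-x<1/n$ (rather than a one-sided approximation) is the only delicate analytic point, and it is precisely the shift by $\delta_n$ that secures the strict lower bound $p_n^2-x>0$.
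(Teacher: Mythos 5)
Your argument is correct and follows the same overall strategy as the paper: approximate the square root by polynomials via Weierstraß, certify the resulting pointwise inequalities on the interval by membership in the quadratic module generated by $x$ and $\lambda-x$, and push the certificate into $\A$ through the substitution $x\mapsto a$. The one genuine difference is the certificate itself: you verify Archimedeanity of the quadratic module and invoke Putinar's Positivstellensatz, which forces you to arrange \emph{strict} positivity of $p_n$, $q_n$ and $1/n-q_n$ (hence the shift by $\delta_n$ and the careful choice of $\eta_n$); the paper instead uses the elementary univariate fact (via the fundamental theorem of algebra, \cite[Prop.~3.3]{schmuedgen:TheMomentProblem}) that every polynomial merely \emph{nonnegative} on $[0,u]$ already lies in $\genPos{\{x,u-x\}}$, which lets it approximate $\sqrt{\argument}$ directly and simply shift the approximant upward. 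Your route uses heavier machinery but is equally valid; the paper's is more self-contained and avoids the strictness bookkeeping. One small remark: your aside that $a(\lambda\Unit-a)$ ``need not be positive'' is undercut by your own identity exhibiting $x(\lambda-x)\in M$ (and since $M$ is stable under multiplication by sums of squares, the quadratic module here coincides with the preordering), so that caveat is moot in the univariate setting, though harmless.
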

\begin{proof}
  Let $u \coloneqq \seminorm{\infty}{a} + 1$ so that $0 \le a \le u \Unit$ by Proposition~\ref{proposition:inftyseminorm}.
  By the (Stone-)Weierstraß theorem, applied to the continuous function 
  $\sqrt{\argument} \colon [0,u] \to \RR$, there exists for every $n\in \NN$ a polynomial
  $p_n' \in \CC[x]_\Hermitian$ such that $\abs{\sqrt{t} - p_n'(t)} \le 1/(4n(\sqrt{u}+1))$
  holds for all $t\in [0,u]$. Define $p_n \coloneqq p_n'+1/(4n(\sqrt{u}+1))$
  and $q_n \coloneqq p_n^2 - x$. Then the estimate 
  $0 \le \sqrt{t} \le p_n(t) \le \sqrt{t}+1/(2n(\sqrt{u}+1))$
  and thus $0 \le p_n^2(t)-t \le 1/n$ hold for all $t\in[0,u]$.
  
  Using the fundamental theorem of algebra, one can show that
  every polynomial $r \in \CC[x]_\Hermitian$ which is pointwise positive on $[0,u]$
  is an element of $\genPos{\{x, u-x\}}$, see e.g.\ 
  \cite[Prop.~3.3]{schmuedgen:TheMomentProblem},
  hence $r(a) \in \algebra{A}^+_\Hermitian$.
  The pointwise estimates for $p_n$ and $q_n$ thus
  imply that $0 \le p_n(a)$ and $0 \le q_n(a) \le \Unit / n$,
  and the identity $a + q_n(a) = p_n(a)^2$ is fulfilled by construction.
\end{proof}
\begin{proposition} \label{proposition:symisrad}
  Every symmetric Archimedean ordered $^*$\=/algebra is radical.
\end{proposition}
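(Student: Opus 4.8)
The plan is to isolate the single point where symmetry is actually needed—producing a bounded, positive inverse of the coercive factor—and to reduce everything else to an auxiliary fact about products of commuting positive elements that uses neither symmetry nor completeness. Concretely, I would first prove the following claim for an arbitrary Archimedean ordered $^*$\=/algebra: if $s\in(\A^\bd)^+_\Hermitian$ is bounded and positive and $w\in\A^+_\Hermitian$ commutes with $s$, then $sw\ge 0$. Granting this, the proposition is immediate: since $a$ is coercive, symmetry provides the inverse $a^{-1}$, which by Lemma~\ref{lemma:inverseordering} is Hermitian, positive and uniformly bounded; as $a^{-1}$ lies in the bicommutant of $a$ it commutes with $b$ and hence with $ab$, so applying the claim with $s\coloneqq a^{-1}$ and $w\coloneqq ab\ge 0$ yields $b=a^{-1}(ab)\ge 0$.

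To prove the claim I would apply Lemma~\ref{lemma:approximateSqrt} to $s$, obtaining polynomials $p_n,q_n$ with $s+q_n(s)=p_n(s)^2$, $0\le p_n(s)$ and $0\le q_n(s)\le\Unit/n$. Because $p_n(s)$ is a polynomial in $s$ it commutes with $w$, so $p_n(s)\,w\,p_n(s)=p_n(s)^2 w=sw+q_n(s)w$; the left-hand side is of the form $d^*w\,d$ with $w\ge 0$ and is therefore positive, whence $sw\ge -q_n(s)w$. The main obstacle is that $w$ may be unbounded, so the error term $q_n(s)w$ cannot be dominated by a scalar multiple of $\Unit$. The key step that overcomes this is to estimate $q_n(s)w$ by a \emph{fixed} positive element. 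Using Lemma~\ref{lemma:csersatz} for the commuting Hermitian elements $q_n(s)$ and $w$ with $\chi^2=1/n$, together with the bound $q_n(s)^2\le\seminorm{\infty}{q_n(s)}^2\Unit\le n^{-2}\Unit$ coming from Proposition~\ref{proposition:inftyseminorm} and the first part of Proposition~\ref{proposition:inftyseminormIsmin}, I obtain
\begin{equation*}
  2\,q_n(s)\,w \;\le\; n\,q_n(s)^2+\tfrac{1}{n}\,w^2 \;\le\; \tfrac{1}{n}\bigl(\Unit+w^2\bigr),
\end{equation*}
that is, $q_n(s)w\le\frac{1}{2n}(\Unit+w^2)$.

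Combining the two displays gives $sw\ge -\frac{1}{2n}(\Unit+w^2)$ for every $n\in\NN$. Since $\Unit+w^2=\Unit+w^*w\ge 0$ is a fixed positive element independent of $n$, for any $\epsilon\in{]0,\infty[}$ choosing $n$ with $\frac{1}{2n}\le\epsilon$ yields $-(sw)\le\epsilon(\Unit+w^2)$, and the Archimedean property then forces $-(sw)\le 0$, i.e. $sw\ge 0$, which proves the claim. I expect the csersatz estimate controlling the unbounded error term $q_n(s)w$ to be the crux of the argument; everything else is the bookkeeping of reducing $b$ to $a^{-1}(ab)$ and invoking that $a^{-1}$ is bounded and positive.
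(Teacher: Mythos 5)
Your proposal is correct and follows essentially the same route as the paper: invert the coercive element via symmetry, note $a^{-1}$ is positive and uniformly bounded (Lemma~\ref{lemma:inverseordering}), use the approximate square roots of Lemma~\ref{lemma:approximateSqrt} to get $0\le p_n(a^{-1})\,ab\,p_n(a^{-1}) = b + q_n(a^{-1})\,ab$, control the error term with Lemma~\ref{lemma:csersatz} against the fixed element $\Unit + a^2b^2$, and finish with the Archimedean property. Packaging the computation as a separate claim about a bounded positive $s$ commuting with a positive $w$ is a harmless reorganization of the identical argument.
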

\begin{proof}
  Let two commuting elements $a,b\in \A_\Hermitian$ and $\epsilon \in {]0,\infty[}$ 
  be given such that $a$ is coercive with $a \ge \epsilon \Unit$ and $ab \ge 0$.
  Using the previous Lemmas~\ref{lemma:inverseordering} and \ref{lemma:approximateSqrt}
  one can construct sequences of polynomials $(p_n)_{n\in \NN}$ and $(q_n)_{n\in \NN}$
  such that $a^{-1} + q_n(a^{-1}) = p_n(a^{-1})^2$ with $0 \le q_n(a^{-1}) \le \Unit/n$
  for all $n\in \NN$, so $0 \le p_n(a^{-1}) \,a b\, p_n(a^{-1}) = b + q_n(a^{-1})\, a b$.
  Using Lemma~\ref{lemma:csersatz} with $\chi = \sqrt{n}$ and that $q_n(a^{-1})^2 \le \Unit/n^2$
  by Lemma~\ref{lemma:ordersquare}, it follows that 
  $2\, q_n(a^{-1})\, a b \le \chi^{-2} a^2 b^2 + \chi^2 q_n(a^{-1})^2 \le (a^2 b^2 + \Unit)/n$.
  As $\A$ is Archimedean it follows that $0 \le b$, so $\A$ is radical.
\end{proof}
In the uniformly complete case, we will also see that the various notions of symmetric $^*$\=/algebras 
that were mentioned before are actually equivalent:
\begin{lemma} \label{lemma:csersatz2}
  Let $\A$ be an ordered $^*$\=/algebra, $a,b\in \A$ and $d \in \A_\Hermitian^+$,
  then
  \begin{equation}
    a^* c \,b + b^* c\, a \le  a^*d\,a + b^*d\, b
    \label{eq:csersatz2}
  \end{equation}
  holds for all $c \in \A_\Hermitian$ fulfilling $-d \le c \le d$.
\end{lemma}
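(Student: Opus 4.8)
The plan is to mimic the proof of Lemma~\ref{lemma:csersatz}, where an inequality of exactly this flavour (indeed the special case $c=d=\Unit$, up to rescaling $a,b$) was obtained by expanding a single manifestly positive expression $(\chi^{-1}a-\chi b)^*(\chi^{-1}a-\chi b)$. Here I expect to need two such ``squares'' rather than one, because the bound now carries an arbitrary positive weight $d$ together with a Hermitian $c$ squeezed between $-d$ and $d$, and this weight cannot be absorbed into a scalar rescaling the way $\chi$ was. As a preliminary step I would record the only two consequences of the hypothesis $-d \le c \le d$ that are actually used, namely $d+c \in \A_\Hermitian^+$ and $d-c\in\A_\Hermitian^+$; both are immediate since the order on $\A_\Hermitian$ is translation-invariant and $c,d$ are Hermitian.

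The key step is then the purely algebraic identity
\begin{equation*}
  a^* d\, a + b^* d\, b - a^* c\, b - b^* c\, a
  =
  \tfrac{1}{2}(a-b)^*(d+c)(a-b) + \tfrac{1}{2}(a+b)^*(d-c)(a+b),
\end{equation*}
valid in any $^*$\=/algebra. I would verify it by expanding both sides: the mixed contributions $a^* d\, b$, $b^* d\, a$, $a^* c\, a$ and $b^* c\, b$ all cancel between the two summands on the right, leaving precisely the four terms on the left.

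To finish, observe that since $d+c$ and $d-c$ are positive Hermitian elements, each summand on the right-hand side is of the shape $f^*(d\pm c)f$ with $f\in\A$ (namely $f = a-b$ and $f=a+b$), hence lies in $\A_\Hermitian^+$ by the defining property \eqref{eq:admissibleCone} of (quasi-)ordered $^*$\=/algebras. Their sum is therefore positive, which is exactly the asserted inequality \eqref{eq:csersatz2}. Note that no completeness, Archimedean, or radicality assumption enters; the statement holds for every ordered $^*$\=/algebra.

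The only genuine work is pinning down the identity, and I expect that to be the main (albeit entirely computational) obstacle: one must choose the correct pairing of signs, $(a-b)$ with $(d+c)$ versus $(a+b)$ with $(d-c)$, so that exactly the cross terms $-a^* c\, b - b^* c\, a$ survive while everything else cancels. Getting this pairing backwards would produce the wrong surviving terms, so I would double-check the expansion coefficient by coefficient before invoking positivity.
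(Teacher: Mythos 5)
Your proof is correct and is essentially the paper's own argument: the paper introduces $c_{(+)}=(d+c)/2$ and $c_{(-)}=(d-c)/2$ and adds the two positivity statements $0\le (a-b)^*c_{(+)}(a-b)$ and $0\le (a+b)^*c_{(-)}(a+b)$, which is exactly your identity written as two separate estimates with the same sign pairing. No further comment needed.
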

\begin{proof}
  Given $c \in \A_\Hermitian$ with $-d \le c \le d$, then write $c_{(+)} \coloneqq (d+c)/2 \in \A_\Hermitian^+$ and $c_{(-)} \coloneqq (d-c)/2\in \A_\Hermitian^+$.
  Note that $c = c_{(+)}-c_{(-)}$ and $d = c_{(+)} + c_{(-)}$.
  From $0 \le (a - b)^* c_{(+)}(a - b)$ and $0 \le (a + b)^* c_{(-)}(a + b)$
  it follows that
  \begin{equation*}
    a^* c_{(+)}b + b^* c_{(+)}a 
    \le
    a^* c_{(+)}a + b^* c_{(+)}b
  \quad\quad\text{and}\quad\quad
    -a^* c_{(-)}b - b^* c_{(-)}a 
    \le
    a^* c_{(-)}a + b^* c_{(-)}b
  \end{equation*}
  hold. Adding these two estimates yields \eqref{eq:csersatz2}.
\end{proof}
\begin{lemma} \label{lemma:invertibleWhenComplete}
  Let $\A$ be an Archimedean ordered $^*$\=/algebra, $\hat{a} \in \A_\Hermitian$
  and $(a_n)_{n\in \NN}$ a sequence in $\A_\Hermitian$ of invertible elements
  such that the sequence of their inverses converges with respect to the uniform metric
  against some limit $e \coloneqq \lim_{n\to \infty} a_n^{-1} \in \A$. Moreover, assume
  that there exist elements $c,d \in \A_\Hermitian^+$ such that $a_n^2 \le c$ for all $n\in \NN$
  and such that for all $\epsilon \in {]0,\infty[}$ there exists an $N \in \NN$
  for which $-\epsilon d \le \hat{a} - a_n \le \epsilon d$ is fulfilled for all $n\in \NN$ with $n\ge N$.
  Then $\hat{a}$ is invertible and $\hat{a}^{-1} = e = \lim_{n\to \infty} a_n^{-1}$.
\end{lemma}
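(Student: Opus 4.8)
The plan is to prove directly that $\hat a\, e = \Unit$; since $\hat a$ and $e$ are both Hermitian, taking adjoints then gives $e\,\hat a = (\hat a\, e)^* = \Unit$, so $\hat a$ is invertible with $\hat a^{-1} = e$. First I would record the preliminaries. Each $a_n^{-1}$ is Hermitian, because applying the $^*$\=/involution to $a_n a_n^{-1} = \Unit$ and using $a_n^* = a_n$ gives $(a_n^{-1})^* a_n = \Unit$, whence $(a_n^{-1})^* = a_n^{-1}$ by uniqueness of inverses. As $\A_\Hermitian$ is closed by Proposition~\ref{proposition:closedstuff}, the limit $e = \lim_n a_n^{-1}$ lies in $\A_\Hermitian$; moreover a convergent sequence is bounded, so there is $K \in {]0,\infty[}$ with $\seminorm{\infty}{a_n^{-1}} \le K$, hence $\seminorm{\infty}{e} \le K$ and in particular $e^2 \le K^2\Unit$ by Proposition~\ref{proposition:inftyseminormIsmin}. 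Writing $u_n \coloneqq a_n^{-1} - e$ and $v_n \coloneqq \hat a - a_n$, one has for every $n\in\NN$ the identity
\begin{equation*}
  \Unit - \hat a\, e = a_n u_n - v_n e,
\end{equation*}
checked by expanding the right-hand side and using $a_n a_n^{-1} = \Unit$.

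Set $r \coloneqq \Unit - \hat a\, e$. Since $\A$ is an Archimedean \emph{ordered} $^*$\=/algebra, the kernel of $\seminorm{\infty}{\argument}$ is trivial by Proposition~\ref{proposition:Cstar}, so it suffices to show $r\,r^* \le 0$: then $r\,r^* = 0$ by antisymmetry, so $\seminorm{\infty}{r^*} = 0$ and therefore $r^* = 0$, i.e.\ $r = 0$. To obtain $r\,r^* \le 0$ I would produce, for every $\epsilon\in{]0,\infty[}$, a bound $r\,r^* \le \epsilon\,W$ with a \emph{single fixed} $W \in \A_\Hermitian^+$, and conclude by the Archimedean property (this avoids ever asserting that $r$ itself is uniformly bounded). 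Expanding $r\,r^* = (a_n u_n - v_n e)(u_n a_n - e v_n)$ and estimating the mixed term by Lemma~\ref{lemma:csersatz} with a free parameter $\chi\in{]0,\infty[}$ gives
\begin{equation*}
  r\,r^* \le (1+\chi^{-2})\, a_n u_n^2 a_n + (1+\chi^2)\, v_n e^2 v_n .
\end{equation*}

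The two surviving terms are of opposite character. The first is controlled by \emph{norm} smallness: since $u_n^2 \le \delta_n^2\Unit$ with $\delta_n \coloneqq \seminorm{\infty}{u_n} \to 0$, conjugating this scalar bound by $a_n$ and using the hypothesis $a_n^2 \le c$ yields $a_n u_n^2 a_n \le \delta_n^2 a_n^2 \le \delta_n^2 c$, a fixed element times a null sequence. The second term is the crux, and it is where the hypothesis on $v_n$ enters: $v_n$ is only \emph{order}-small, $-\epsilon d \le v_n \le \epsilon d$, so — unlike $u_n$ — its square is not dominated by a scalar multiple of $\Unit$, and the naive estimate $v_n^2 \le \epsilon^2 d^2$ genuinely fails in the non\=/commutative setting. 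Here I would apply Lemma~\ref{lemma:csersatz2} so as to use $v_n$ only ``linearly'', together with the fixed a priori bound $v_n^2 = (\hat a - a_n)^2 \le 2\hat a^2 + 2 a_n^2 \le 2\hat a^2 + 2c$, to extract an estimate $v_n e^2 v_n \le \epsilon\, W$ with one fixed $W \in \A_\Hermitian^+$ for all large $n$. Choosing $\chi = 1$ and letting $n\to\infty$ then gives $r\,r^* \le \epsilon'(c + W)$ for every $\epsilon'\in{]0,\infty[}$, and the Archimedean property forces $r\,r^* \le 0$. The main obstacle is exactly this last control of the term quadratic in the order\=/small perturbation $v_n$: converting the order\=/convergence $a_n \to \hat a$ into a bound on $v_n e^2 v_n$ \emph{without squaring} $v_n$ is the delicate step, and it is precisely what requires Lemma~\ref{lemma:csersatz2} in combination with the boundedness supplied by $a_n^2 \le c$.
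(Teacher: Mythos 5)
Your reduction to showing $\hat{a}e = \Unit$ and the identity $\Unit - \hat{a}e = a_n(a_n^{-1}-e) - (\hat{a}-a_n)e$ match the paper's starting point, but the route via $r r^*$ has a genuine gap at exactly the step you flag as delicate. Squaring forces the order\-/small perturbation $v_n = \hat{a}-a_n$ to appear quadratically in the term $v_n e^2 v_n$, and the asserted estimate $v_n e^2 v_n \le \epsilon W$ with a single fixed $W$ is not obtainable from Lemma~\ref{lemma:csersatz2} plus the a~priori bound $v_n^2 \le 2\hat{a}^2+2c$. Lemma~\ref{lemma:csersatz2} only controls expressions \emph{linear} in the order\-/small element: applied to $2v_n^2$ it gives $2v_n^2 \le \epsilon(d + v_n d\, v_n)$, whose residual $v_n d\, v_n$ is again quadratic in $v_n$ and is not dominated by any fixed element, because $d$ is not assumed uniformly bounded (in the intended application, Proposition~\ref{proposition:invertibleWhenComplete}, $d=b^4$); iterating only produces higher powers of $d$. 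The a~priori bound, combined with $e^2 \le K^2\Unit$, yields only the \emph{fixed} majorant $v_n e^2 v_n \le K^2(2\hat{a}^2+2c)$, which does not shrink with $\epsilon$, so the Archimedean argument cannot be closed. There is also a secondary error: $\seminorm{\infty}{e} \le K$ does not follow from convergence of $(a_n^{-1})_{n\in\NN}$, since $\metric_\infty$ is capped at $1$ and metric boundedness says nothing about $\seminorm{\infty}{\argument}$; indeed $e$ need not be uniformly bounded at all (in $\CC^{]0,1[}$ take all $a_n=\hat{a}$ equal to the function $x\mapsto x$, so that $e$ is $x \mapsto 1/x$), so even the fixed majorant above is unavailable as written.

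The paper's proof avoids the quadratic term entirely by never forming $rr^*$: it bounds the Hermitian elements $2\RE(\hat{a}e-\Unit)$ and $2\IM(\hat{a}e-\Unit)$ directly, writing $\hat{a}e-\Unit = (\hat{a}-a_n)e + a_n(e-a_n^{-1})$ and estimating $(\I^k\Unit)^*(\hat{a}-a_n)e + e(\hat{a}-a_n)(\I^k\Unit) \le \epsilon(d+ede)$ by Lemma~\ref{lemma:csersatz2} (linear in $v_n$, and requiring no uniform boundedness of $e$) together with $(\I^k a_n)^*(e-a_n^{-1}) + (e-a_n^{-1})(\I^k a_n) \le \epsilon(c+\Unit)$ by Lemma~\ref{lemma:csersatz} with $\chi = 1/\sqrt{\epsilon}$, for $k\in\{0,1,2,3\}$; the Archimedean property and antisymmetry of the order then give $\hat{a}e=\Unit$. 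To salvage your approach you would have to do essentially the same: estimate $r+r^*$ and $\I(r-r^*)$ rather than $rr^*$, keeping $v_n$ linear throughout.
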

\begin{proof}
  As all $a_n$ with $n\in \NN$ and thus also their inverses $a_n^{-1}$ are Hermitian, $e$ is Hermitian by Proposition~\ref{proposition:closedstuff}.
  Therefore it is sufficient to show that
  $\hat{a}e = \Unit$, which then also implies $e\hat{a} = (\hat{a} e)^* = \Unit$.
  
  So let $\epsilon \in {]0,\infty[}$ be given, then there exists an $n\in \NN$ such that 
  $-\epsilon d \le \hat{a} - a_n \le \epsilon d$ and $\seminorm{\infty}{e-a_n^{-1}} \le \epsilon$ hold,
  thus also $(e-a_n^{-1})^2 \le \epsilon^2 \Unit$ by Proposition~\ref{proposition:inftyseminormIsmin}.
  Using the previous Lemma~\ref{lemma:csersatz2} and Lemma~\ref{lemma:csersatz}
  with $\chi \coloneqq 1 / \sqrt{\epsilon}$ one finds that
  \begin{align*}
    \underbrace{
      (\I^k \Unit)^* (\hat{a}-a_n) e + e (\hat{a}-a_n) (\I^k \Unit)
    }_{
      \le \epsilon(d+ede)
    }
    +
    \underbrace{
      (\I^k a_n)^* (e-a_n^{-1}) + (e-a_n^{-1}) (\I^k a_n)
    }_{
      \le \epsilon(c + \Unit)
    }
    \le 
    \epsilon (\Unit+c+d+ede)
  \end{align*}
  holds for all $k\in \{0,1,2,3\}$, or equivalently, $-\epsilon (\Unit+c+d+ede) \le 2\RE(\hat{a}e - \Unit) \le \epsilon (\Unit+c+d+ede)$
  and $-\epsilon (\Unit+c+d+ede) \le 2\IM(\hat{a}e - \Unit) \le \epsilon (\Unit+c+d+ede)$
  because $\hat{a}e - \Unit = (\hat{a} - a_n) e + a_n(e-a_n^{-1})$.
\end{proof}
\begin{proposition} \label{proposition:invertibleWhenComplete}
  Let $\A$ be a uniformly complete Archimedean ordered $^*$\=/algebra and $a,b\in\A^+_\Hermitian$ commuting
  such that $a$ is coercive, $b$ invertible and $a \le b^2$. Then $a$ is also invertible.
\end{proposition}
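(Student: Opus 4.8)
The plan is to exhibit a sequence of invertible approximants of $a$ to which Lemma~\ref{lemma:invertibleWhenComplete} applies, after first passing into the $C^*$\=/algebra of uniformly bounded elements. Since $b^2 \ge a \ge \epsilon\Unit$ is coercive and invertible, Lemma~\ref{lemma:inverseordering} gives $b^{-2} \le \epsilon^{-1}\Unit$, so $b^{-1}$ is Hermitian and uniformly bounded with $\seminorm{\infty}{b^{-1}} \le \epsilon^{-1/2}$, and $t \coloneqq b^{-1}a\,b^{-1} = b^{-2}a \in \A^\bd$ satisfies $0 \le t \le \Unit$. As $\A$ is uniformly complete and Archimedean, $\A^\bd$ is a $C^*$\=/algebra by Proposition~\ref{proposition:Cstar} and Proposition~\ref{proposition:closedstuff}; being a $C^*$\=/algebra it is symmetric, hence radical by Proposition~\ref{proposition:symisrad}, so commuting positive elements of $\A^\bd$ have positive product (Corollary~\ref{corollary:radicalproducts}). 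I note at once the easy case: if $a$ is itself uniformly bounded, then $a \le b^2 \in \A^\bd$ forces $a \in \A^\bd$, and a coercive element of the $C^*$\=/algebra $\A^\bd$ is invertible. The work therefore concerns unbounded $a$.

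The crucial technical device is that a product of commuting positive elements of $\A$ is positive as soon as, after conjugating by $b^{-1}$, both factors land in $\A^\bd$: for $0 \le x \in \A^\bd_\Hermitian$ one has $(a-\epsilon\Unit)\,b^{-2}x = (t-\epsilon b^{-2})\,x \ge 0$, since $t-\epsilon b^{-2} = b^{-1}(a-\epsilon\Unit)b^{-1} \ge 0$ and $x$ are commuting positive elements of $\A^\bd$. With this I would record the a priori bounds $b^{-2} \le a^{-1} \le \epsilon^{-1}\Unit$ that the inverse must satisfy, testing them on the Neumann\=/type sums $S_N \coloneqq b^{-1}\big(\sum_{k=0}^{N} r^k\big)b^{-1}$ with $r \coloneqq \Unit - t$, $0 \le r \le \Unit$: a telescoping computation gives $a S_N = \Unit - r^{N+1}$, and the device yields $\epsilon S_N \le a S_N \le \Unit$, whence $0 \le S_N \le \epsilon^{-1}\Unit$ and $S_N - S_M \le \epsilon^{-1} r^{M+1}$ for $N \ge M$.

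The sequence I would actually feed into Lemma~\ref{lemma:invertibleWhenComplete} consists of the truncations $a_n$ of $a$ at level $n$ (morally $a_n = \min\{a, n\Unit\}$, equivalently $a_n^{-1} = \max\{a^{-1}, \tfrac1n\Unit\}$). Each $a_n$ is uniformly bounded and coercive, hence invertible in the $C^*$\=/algebra $\A^\bd$; the inverses converge uniformly to some $e \in \A^\bd$; one has $a_n \le a$, so $a_n^2 \le a^2 \eqqcolon c$; and the elementary scalar inequality $n\big(\tau - \min\{\tau,n\}\big) \le \tau^2$ for $\tau \ge 0$ transcribes into $0 \le a - a_n \le \tfrac1n a^2$, which is precisely the squeezing hypothesis with $d \coloneqq a^2$. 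Lemma~\ref{lemma:invertibleWhenComplete} then delivers that $a$ is invertible with $a^{-1} = e$.

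The hard part is the construction of the truncations themselves. A general Archimedean ordered $^*$\=/algebra carries neither a lattice structure nor a continuous functional calculus for the unbounded element $a$, so $\min\{a,n\Unit\}$ is not available for free, and this is where uniform completeness must be used in earnest. The obstruction is sharply localised: on the part of $\A$ where $a$ remains bounded while $b$ is unbounded, the naive candidates built from $b$ — such as $a + \tfrac1n b^2$ or $(t+\tfrac1n\Unit)^{-1}b^{-2}$ — fail to have uniformly convergent inverses, because their inverses track $b^{-2}$ (which degenerates there) rather than $a^{-1}$; this is what forces the approximants to follow $a$ itself. I expect to realise $a_n$, or directly its inverse $\max\{a^{-1},\tfrac1n\Unit\}$, as a uniform limit inside $\A^\bd$, extracting the bounded coercive part of $a$ from the sums $S_N$ together with the commuting\=/product device of the second paragraph. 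It is the uniform convergence of this construction, rather than the final invocation of Lemma~\ref{lemma:invertibleWhenComplete}, that I anticipate to be the main difficulty.
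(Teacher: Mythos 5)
Your proposal sets up the right framework (reduction to the $C^*$\=/algebra $\A^\bd$, the element $t=b^{-1}a\,b^{-1}$, and the plan to invoke Lemma~\ref{lemma:invertibleWhenComplete} on a sequence of invertible approximants), and your conditional argument in the third paragraph is sound: \emph{if} truncations $a_n$ with $a_n$ invertible, $a_n^{-1}$ uniformly convergent, $a_n^2\le a^2$ and $0\le a-a_n\le a^2/n$ were available, the lemma would finish the proof. But you never produce them, and you say so yourself. This is not a peripheral detail: constructing the approximants and proving the uniform convergence of their inverses \emph{is} the proof. Worse, the specific object you aim for, $a_n=a\wedge n\Unit$, is not available at this stage of the theory except by circularity: in the paper the existence of $a\wedge n\Unit$ for coercive $a$ is one of the equivalent conditions of Theorem~\ref{theorem:su} (and Proposition~\ref{proposition:invconst} runs the implication in the direction ``existence of $a\wedge n\Unit$ $\Rightarrow$ invertibility''), while the general existence of such infima is obtained via square roots, whose construction in Proposition~\ref{proposition:sqrtconst} itself requires the invertibility of $a+\Unit/n$ --- i.e.\ essentially the statement you are trying to prove. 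So the route through lattice truncations leaves the hardest step open and points back at itself.

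You also half-dismissed the candidate that actually works. You correctly observe that $a_n\coloneqq a+b^2/n$ fails (its inverses are not uniformly Cauchy on the region where $a$ stays bounded while $b$ blows up), but the fix is not to abandon candidates built from $b$; it is to conjugate by $b$ first. The paper sets $\hat a\coloneqq bab$ (so that $a^{-1}=b\,\hat a^{-1}b$ once $\hat a^{-1}$ exists) and takes $a_n\coloneqq\hat a+b^4/n=b\,(a+b^2/n)\,b$. Invertibility of each $a_n$ is obtained exactly as in your first paragraph, by noting that $b^{-2}a_nb^{-2}$ is a coercive element of the $C^*$\=/algebra $\A^\bd$. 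The point of the conjugation is the Cauchy estimate: from $\epsilon b^2\le\hat a$ one gets $\epsilon^2b^4\le\hat a^2\le a_n^2$, hence $a_n^{-1}b^4a_n^{-1}\le\epsilon^{-2}\Unit$, and then Lemma~\ref{lemma:csersatz2} applied to $a_m^{-1}-a_n^{-1}=\tfrac12\big(a_m^{-1}(a_n-a_m)a_n^{-1}+a_n^{-1}(a_n-a_m)a_m^{-1}\big)$ with $-\abs{m^{-1}-n^{-1}}b^4\le a_n-a_m\le\abs{m^{-1}-n^{-1}}b^4$ yields $a_m^{-1}-a_n^{-1}\le\epsilon^{-2}\abs{m^{-1}-n^{-1}}\Unit$. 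This is precisely the uniform estimate whose absence you identified as the obstruction; without it (or the unconstructed truncations), your argument does not close.
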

\begin{proof}
  It is sufficient to show that $\hat{a} \coloneqq bab$ has an inverse, then
  $a$ is also invertible with $a^{-1} = b\hat{a}^{-1}b$.
  There is an $\epsilon \in {]0,\infty[}$ such that $\epsilon \Unit \le a$,
  and consequently $\epsilon \Unit \le b^2$ and $\epsilon^2 \Unit \le \epsilon b^2 \le \hat{a} \le b^4$ hold.
  
  Define $a_n \coloneqq \hat{a} + b^4/n$ for all $n\in \NN$,
  $c \coloneqq \hat{a}^2 + 3b^8$ and $d \coloneqq b^4$, then
  $a_n^2 = \hat{a}^2 + 2 b^2\hat{a} b^2 /n + b^8/n^2 \le c$
  and $-d/n = \hat{a} - a_n \le d / n$.
  In order to apply the previous
  Lemma~\ref{lemma:invertibleWhenComplete} it only remains to show that
  all $a_n$ with $n\in \NN$ are invertible and that the sequence of their inverses
  is a Cauchy sequence.
  
  Consider $b^{-2} a_n b^{-2} = b^{-2} \hat{a} b^{-2} + \Unit/n$.
  Then $\Unit / n \le b^{-2} a_n b^{-2} \le (1+1/n)\Unit$, 
  so $b^{-2} a_n b^{-2}$ is a coercive element of $\A^\bd$. From Proposition~\ref{proposition:closedstuff}
  it follows that $\A^\bd$ is uniformly complete itself, hence a $C^*$\=/algebra, 
  so $b^{-2} a_n b^{-2}$ is invertible in $\A^\bd$
  (the inverse can be constructed explicitly e.g.\ using a Neumann series).
  Consequently, $a_n$ is also invertible with $a_n^{-1} = b^{-2} (b^{-2} a_n b^{-2})^{-1} b^{-2}$.
  Moreover, using $- \abs{m^{-1}-n^{-1}} b^4 \le a_n-a_m \le \abs{m^{-1}-n^{-1}} b^4$
  and Lemma~\ref{lemma:csersatz2}, one obtains the estimate
  \begin{equation*}
    a_m^{-1} - a_n^{-1}
    =
    \frac{
      a_m^{-1}(a_n - a_m)a_n^{-1}
      +
      a_n^{-1}(a_n - a_m)a_m^{-1}
    }{
      2
    }
    \le
    \abs[\bigg]{\frac{1}{m}-\frac{1}{n}}
    \frac{
      a_m^{-1} b^4 a_m^{-1}
      +
      a_n^{-1} b^4 a_n^{-1}
    }{
      2
    }
  \end{equation*}
  for all $m,n\in \NN$. From $\epsilon b^2 \le \hat{a}$
  it follows that $\epsilon \Unit \le b^{-1}\hat{a}b^{-1}$
  and thus 
  \begin{equation*}
    \epsilon^2 b^4 
    \le 
    (\hat{a}-\epsilon b^2)^2 + \epsilon^2 b^4
    =
    \hat{a}^2 -2 \epsilon b(\hat{a} - \epsilon b^2)b 
    \le 
    \hat{a}^2
    \le 
    \hat{a}^2 + 2 b^2 \hat{a} b^2 / n + b^8 / n^2
    = 
    a_n^2
  \end{equation*}
  for all $n\in \NN$.
  The combination of these estimates yields
  $a_m^{-1} - a_n^{-1} \le \epsilon^{-2} \abs{m^{-1}-n^{-1}} \Unit$
  for all $m,n\in \NN$, so $(a_n^{-1})_{n\in \NN}$ is indeed a Cauchy sequence.
\end{proof}
\begin{corollary} \label{corollary:symcond}
  Let $\A$ be a uniformly complete Archimedean ordered $^*$\=/algebra in which $\Unit+a^2$ is
  invertible for all $a\in \A_\Hermitian$, then $\A$ is symmetric.
\end{corollary}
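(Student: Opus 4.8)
The plan is to reduce everything to Proposition~\ref{proposition:invertibleWhenComplete}, which converts the existence of a suitable invertible upper bound into genuine invertibility. So I would start from an arbitrary coercive element $a\in \A_\Hermitian$; then in particular $a\ge 0$, i.e.\ $a\in \A^+_\Hermitian$, and the task is to produce some $b\in\A^+_\Hermitian$ that commutes with $a$, is invertible, and satisfies $a\le b^2$. Once such a $b$ is found, Proposition~\ref{proposition:invertibleWhenComplete} immediately yields that $a$ is invertible.

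The natural candidate is $b \coloneqq \Unit + a^2$. It is a polynomial in $a$, hence commutes with $a$; it satisfies $b\ge \Unit\ge 0$, so $b\in\A^+_\Hermitian$; and it is invertible \emph{precisely} by the hypothesis of the corollary, applied to the Hermitian element $a$. This last point is the only place where the assumption ``$\Unit+a^2$ invertible'' is used in an essential way, and it is exactly what singles out $\Unit+a^2$ over more obvious choices such as $\Unit+a$. The only thing remaining to verify for this $b$ is the order estimate $a\le b^2$.

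I would establish $a\le b^2$ in two elementary steps, each via an explicit sum of squares, so as to avoid any appeal to the radical property or to commuting products of positive elements (which are not yet available here, since symmetry is what we are trying to prove). First, the identity $\Unit+a^2-a = \left(a-\tfrac12\Unit\right)^2 + \tfrac34\Unit$ shows $a\le \Unit + a^2 = b$. Second, $b^2 - b = (\Unit+a^2)^2 - (\Unit+a^2) = a^2 + a^4 = a^*a + (a^2)^*(a^2)\ge 0$ shows $b\le b^2$. Chaining the two gives $a\le b\le b^2$, as required.

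Putting this together, with $a,b\in\A^+_\Hermitian$ commuting, $a$ coercive, $b$ invertible and $a\le b^2$, Proposition~\ref{proposition:invertibleWhenComplete} shows that $a$ is invertible; since $a$ was an arbitrary coercive element, $\A$ is symmetric. I do not anticipate a serious obstacle: all the real analytic content (uniform completeness, the Cauchy-sequence argument) is already packaged inside Proposition~\ref{proposition:invertibleWhenComplete}, and what remains is only the bookkeeping of choosing $b=\Unit+a^2$ and checking the two sum-of-squares inequalities above.
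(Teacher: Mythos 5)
Your proposal is correct and follows essentially the same route as the paper: both take $b\coloneqq\Unit+a^2$, verify $a\le b\le b^2$ by explicit sum-of-squares identities (the paper writes $a\le 2a=\Unit+a^2-(\Unit-a)^2\le\Unit+a^2\le(\Unit+a^2)^2$, which differs from your decomposition only cosmetically), and then invoke Proposition~\ref{proposition:invertibleWhenComplete}. No gaps.
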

\begin{proof}
  Given a coercive $a \in \A_\Hermitian$, then one can apply the previous Proposition~\ref{proposition:invertibleWhenComplete}
  with $b \coloneqq \Unit + a^2$ because $a \le 2a = \Unit+a^2 - (\Unit -a)^2 \le \Unit+a^2 \le (\Unit + a^2)^2$.
\end{proof}
\section{\texorpdfstring{$\Phi^*$-Algebras}{Phi*-Algebras}} \label{sec:supsetc}
It has already been mentioned in Section~\ref{sec:preliminaries} that Riesz spaces which carry a
non-commutative multiplication have rather pathological properties. Because of this, a
well-behaved non-commutative generalization of the notion of $\Phi$\=/algebras must necessarily
deal with some restrictions to the infima and suprema. Moreover, like in $\Phi$\=/algebras, there
should also be a compatibility between suprema, infima and the product,
but it might not be immediately clear what exactly this compatibility should be. The following
observation, which gives a mostly algebraic characterization of suprema and infima,
might serve as a motivation (recall that $\argument'$ denotes the commutant):
\begin{proposition} \label{proposition:supinfchar}
  Let $\A$ be a radical Archimedean ordered $^*$\=/algebra, $a,b\in \A_\Hermitian$
  commuting and let $x \in \{a,b\}'' \cap \A_\Hermitian$ be such that $x^2 + ab = x(a+b)$, then the following
  holds:
  \begin{itemize}
    \item If $2x \ge a+b$, then $x$ is the supremum of $a$ and $b$ in $\{a,b\}' \cap \A_\Hermitian$.
    \item If $2x \le a+b$, then $x$ is the infimum of $a$ and $b$ in $\{a,b\}' \cap \A_\Hermitian$.
  \end{itemize}
\end{proposition}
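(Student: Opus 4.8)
The plan is to analyze the relation $x^2 + ab = x(a+b)$, which can be rewritten in the factored form $(x-a)(x-b) = 0$, exploiting that $a$, $b$, and $x$ all pairwise commute (since $x \in \{a,b\}''$). This factorization is the algebraic heart of the statement: it says $x$ behaves like a ``root'' separating $a$ and $b$, exactly as the pointwise maximum or minimum of two functions satisfies $(\max-f)(\max-g)=0$. The two additional inequalities $2x \ge a+b$ or $2x \le a+b$ then pick out which root we have. I will treat the supremum case in detail; the infimum case follows by applying it to $-a$, $-b$, $-x$ (note $-x$ is then the supremum of $-a$ and $-b$, and one checks the defining relation and sign condition are preserved).

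First I would establish that $x$ is an upper bound for $a$ and $b$ in $\{a,b\}' \cap \A_\Hermitian$. Consider $y \coloneqq x - a$ and $z \coloneqq x-b$; these are Hermitian, commute with each other and with $a,b$, and satisfy $yz = 0$ by the factored relation. Adding, $y + z = 2x - (a+b) \ge 0$ by hypothesis. The key step is to show $y \ge 0$ and $z \ge 0$ separately. Here I would use Corollary~\ref{corollary:radicalproducts} and the radical property: from $yz = 0$ and $y+z \ge 0$, together with commutativity, I want to deduce both factors are positive. Concretely, $y^2 = y(y+z) \ge 0$ automatically, but to get $y \ge 0$ I would argue that $(y+z)y = y^2 \ge 0$ with $y+z$ positive; approximating $y+z$ by the coercive elements $y+z+\epsilon\Unit$ and invoking the radical property (as in the proof of Proposition~\ref{proposition:radicalOrderSquare}) should force $y \ge 0$, and symmetrically $z \ge 0$. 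Thus $x \ge a$ and $x \ge b$.

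Next I would show $x$ is the \emph{least} upper bound within $\{a,b\}' \cap \A_\Hermitian$. Let $w \in \{a,b\}' \cap \A_\Hermitian$ be any upper bound, so $w \ge a$ and $w \ge b$. I must prove $w \ge x$. The natural idea is to use the identity $yz = 0$ again: since $w - a \ge 0$ and $w - b \ge 0$ and everything in sight commutes, I would multiply the two nonnegative elements $(w-a)$ and $(x - b) = z \ge 0$, or more cleverly exploit that $x - w = (x-a) - (w-a) = y - (w-a)$ and simultaneously $x-w = z - (w-b)$. Multiplying these two expressions for $x-w$ gives $(x-w)^2 = \bigl(y-(w-a)\bigr)\bigl(z-(w-b)\bigr)$; expanding and using $yz=0$ leaves $(x-w)^2 = -y(w-b) - (w-a)z + (w-a)(w-b)$. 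Since $y,z,(w-a),(w-b)$ are all positive and pairwise commuting, Corollary~\ref{corollary:radicalproducts} makes each product positive, but the signs are mixed, so this needs care; I expect to instead bound $x - w$ directly. The cleaner route: from $y \ge 0$ one has $x \ge a$, but I want $w \ge x$, equivalently $(x-w) \le 0$. I would write $(x-w)$ and multiply by the positive commuting element $(w-a)+(w-b) = 2w-(a+b)$, hoping the radical property converts a product inequality into $x \le w$.

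The main obstacle I anticipate is precisely this minimality argument: proving positivity of a single factor from the vanishing of a product, and then leveraging it to compare $x$ with an arbitrary upper bound $w$, requires the radical property to be invoked in the right configuration, and the non-commutativity restriction to the commutant $\{a,b\}'$ is essential for the products to be well-signed via Corollary~\ref{corollary:radicalproducts}. I would expect the decisive computation to hinge on factoring $(x-w)(\text{something positive})$ as a manifestly positive (or negative) expression using $yz=0$, then passing through coercive approximations $+\epsilon\Unit$ and the Archimedean property exactly as in Proposition~\ref{proposition:radicalOrderSquare} to remove the $\epsilon$. Getting the algebra of that factorization to cooperate — so that radicality applies cleanly rather than leaving cross terms of indefinite sign — is the step most likely to require genuine ingenuity.
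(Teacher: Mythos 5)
Your factorization $(x-a)(x-b)=0$ is indeed the algebraic heart of the statement, but both halves of your argument have gaps, and the single missing idea in each case is the same: compare \emph{squares} and invoke Proposition~\ref{proposition:radicalOrderSquare}, rather than trying to apply the radical property to a product directly. For the upper-bound step, your proposed mechanism fails as stated: with $y=x-a$, $z=x-b$, you have $(y+z)y=y^2\ge 0$, but the coercive perturbation needed for the radical property gives $(y+z+\epsilon\Unit)y=y^2+\epsilon y$, and the term $\epsilon y$ has exactly the unknown sign you are trying to determine, so positivity of this product is not available. The working argument is to note $(y+z)^2=y^2+z^2=(y-z)^2$ (using $yz=0$), i.e.\ $(2x-a-b)^2=(a-b)^2$, and then apply Proposition~\ref{proposition:radicalOrderSquare} to the positive element $2x-a-b$ to get $-(2x-a-b)\le a-b\le 2x-a-b$, hence $x\ge a$ and $x\ge b$. (This is the computation hidden inside the proof of Proposition~\ref{proposition:radicalOrderSquare} that you allude to, but you never actually produce the manifestly positive product that the radical property needs.)

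The minimality step is where the real gap lies: you explicitly leave it open, and your candidate computations do not close. Indeed, writing $u=w-a\ge 0$, $v=w-b\ge 0$, your product $(u+v)(x-w)$ expands to $uz+vy-2uv$, a mix of positive and negative terms with no usable sign, as you suspected. The decisive move is again to square: by Corollary~\ref{corollary:radicalproducts}, $0\le(w-a)(w-b)=w^2-w(a+b)+ab$, hence
\begin{equation*}
  (2w-a-b)^2 = 4\big(w^2-w(a+b)+ab\big)+(a-b)^2 \ge (a-b)^2 = (2x-a-b)^2 ,
\end{equation*}
and since $w$ commutes with $x\in\{a,b\}''$ and $2w-a-b\ge 0$, Proposition~\ref{proposition:radicalOrderSquare} yields $2x-a-b\le 2w-a-b$, i.e.\ $w\ge x$. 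Without this square-comparison your proof of least-upper-boundedness does not exist, so the proposal as written is incomplete.
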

\begin{proof}
  First assume that $2x \ge a+b$, then
  $(2x-a-b)^2 = 4x^2 - 4x(a+b) + 4ab +(a-b)^2 = (a-b)^2$
  implies $-(2x-a-b) \le a-b \le 2x-a-b$ by Proposition~\ref{proposition:radicalOrderSquare},
  so $x \ge a$ and $x\ge b$. Moreover, if some $y\in \{a,b\}' \cap \A_\Hermitian$
  also fulfils $y\ge a$ and $y\ge b$, then $0 \le (y-a)(y-b) = y^2 - y(a+b) + ab$
  by Corollary~\ref{corollary:radicalproducts} and thus
  $(2y-a-b)^2 = 4y^2 - 4y(a+b) + 4ab +(a-b)^2 \ge (a-b)^2 = (2x-a-b)^2$.
  Proposition~\ref{proposition:radicalOrderSquare} now shows that
  $2y-a-b \ge 2x-a-b$, so $y\ge x$ and $x$ is indeed the
  supremum of $a$ and $b$ in $\{a,b\}' \cap \A_\Hermitian$.
  If $2x \le a+b$, then one can apply the above argument to $-x$, $-a$ and $-b$.
\end{proof}
The following definition thus makes sense and describes suprema and infima that
fulfil additional algebraic conditions:
\begin{definition} \label{definition:veewedge}
  Let $\A$ be a radical Archimedean ordered $^*$\=/algebra and $a,b\in \A_\Hermitian$ commuting.
  Then $a\vee b$ is (if it exists) the element in $\{a,b\}'' \cap \A_\Hermitian$
  which fulfils
  \begin{equation}
    2(a\vee b) \ge a+b
    \quad\quad \text{and} \quad\quad
    (a\vee b)^2  + ab = (a\vee b)(a+b) 
    .
  \end{equation}
  Similarly, $a\wedge b$ is (if it exists) the element in $\{a,b\}'' \cap \A_\Hermitian$
  which fulfils 
  \begin{equation}
    2(a\wedge b) \le a+b
    \quad\quad \text{and} \quad\quad
    (a\wedge b)^2  + ab = (a\wedge b)(a+b) 
    .
  \end{equation}
  A radical Archimedean ordered $^*$\=/algebra $\A$ in which $a\vee b$ and $a\wedge b$
  exist for all commuting $a,b\in \A_\Hermitian$ will be called a \neu{$\Phi^*$-algebra}.
\end{definition}
Proposition~\ref{proposition:supinfchar} especially guarantees that $a\vee b$ and $a\wedge b$
(if they exist) are uniquely determined as certain suprema and infima.
In a $\Phi^*$\=/algebra $\A$, Proposition~\ref{proposition:supinfchar} also
has a rather trivial, but noteworthy converse: If $a,b \in \A_\Hermitian$
commute and $x \in \{a,b\}'\cap \A_\Hermitian$ is the supremum or infimum
of $a$ and $b$ in some real linear subspace $V$ of $\A_\Hermitian$ such
that $\{a,b\}'' \cap \A_\Hermitian \subseteq V \subseteq \{a,b\}' \cap \A_\Hermitian$,
then $x$ coincides with $a \vee b$ or $a \wedge b$, respectively, due to the uniqueness
of the suprema and infima; so especially $x \in \{a,b\}'' \cap \A_\Hermitian$
and $x^2 + ab = x(a+b)$.

There are some basic results about these suprema and infima which are not very surprising as
they mostly mimic the rules in ordered vector spaces, and which can easily be checked:
\begin{proposition}\label{proposition:veewedge}
  Let $\A$ be a radical Archimedean ordered $^*$\=/algebra and $a,b \in \A_\Hermitian$
  commuting. Then $a\vee b$ exists if and only if $a\wedge b$ exists, and the two are
  related by
  \begin{equation}
    (a\vee b) + (a\wedge b) = a+b
    \quad\quad\text{and}\quad\quad
    (a\vee b) (a\wedge b) = ab
    .
  \end{equation}
  Moreover, if one, hence both of $a\vee b$ and $a\wedge b$ exist, then the following holds:
  \begin{enumerate}
    \item $b\vee a = a\vee b$ and $b \wedge a = a \wedge b$ exist. \label{item:veewedge:kom}
    \item $(\lambda a)\vee(\lambda b) = \lambda (a\vee b)$ and $(\lambda a)\wedge(\lambda b) = \lambda (a\wedge b)$ exist
      for all $\lambda \in {[0,\infty[}$. \label{item:veewedge:mult}
    \item $(- a)\wedge( - b) = -(a\vee b)$ and $(- a)\vee( - b) = -(a\wedge b)$
      exist. \label{item:veewedge:neg}
    \item $(a+c)\vee(b+c) = (a\vee b) + c$ exists for all $c \in \{a,b\}' \cap \A_\Hermitian$ with
      $(a\vee b) + c \in \{a+c,b+c\}''$, and
      $(a+c)\wedge(b+c) = (a\wedge b) + c$ exists for all $c \in \{a,b\}' \cap \A_\Hermitian$ with
      $(a\wedge b) + c \in \{a+c,b+c\}''$. \label{item:veewedge:add}
  \end{enumerate}
\end{proposition}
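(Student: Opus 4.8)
The plan is to treat every assertion in a uniform way: in each case I guess the obvious candidate for the supremum or infimum in question, verify that it meets the two defining conditions of Definition~\ref{definition:veewedge} (membership in the relevant bicommutant together with the algebraic identity $x^2 + ab = x(a+b)$, and the appropriate order inequality, namely $2x \ge a+b$ for the supremum and $2x \le a+b$ for the infimum), and then conclude by the uniqueness built into that definition. This uniqueness is exactly what Proposition~\ref{proposition:supinfchar} provides, since it characterises $a \vee b$ and $a \wedge b$ as honest suprema and infima in $\{a,b\}' \cap \A_\Hermitian$. Throughout I will repeatedly use that $a$ and $b$ commute, so that $\{a,b\}$ is a commutative set with $\{a,b\} \subseteq \{a,b\}'' \subseteq \{a,b\}'$, and in particular that every element of $\{a,b\}''$ commutes with every element of $\{a,b\}'$.

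For the equivalence of existence and the two relations, I would assume that $s \coloneqq a \vee b$ exists and set $t \coloneqq a+b-s$. Since $a+b \in \{a,b\}''$, the element $t$ lies in $\{a,b\}'' \cap \A_\Hermitian$; the inequality $2s \ge a+b$ gives $2t = 2(a+b)-2s \le a+b$; and a direct expansion of $t^2 + ab - t(a+b)$, using that $s$ commutes with $a+b$, collapses to $s^2 + ab - s(a+b)$, which vanishes by the defining identity for $s$. Hence $t$ satisfies the conditions for $a \wedge b$, so $a \wedge b$ exists and equals $a+b-s$, giving the additive relation. The multiplicative relation then follows from $s\,t = s(a+b) - s^2 = ab$. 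The converse direction (existence of $a\wedge b$ forcing existence of $a \vee b$) is identical with the roles interchanged.

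The four enumerated items are then verifications of the same flavour, each with its natural candidate. For \refitem{item:veewedge:kom} the defining conditions are manifestly symmetric in $a$ and $b$, so $a \vee b$ itself serves as $b \vee a$. For \refitem{item:veewedge:mult} the candidate is $\lambda(a\vee b)$: scaling preserves the identity (both sides acquire a factor $\lambda^2$) and, since $\lambda \ge 0$, the order inequality as well, while $\{\lambda a, \lambda b\}'' = \{a,b\}''$ for $\lambda > 0$ and the case $\lambda = 0$ is trivial. For \refitem{item:veewedge:neg} the candidate is $-(a\vee b)$, and negating both entries turns the supremum condition $2x \ge a+b$ into the infimum condition $2x \le (-a)+(-b)$ while leaving the identity untouched. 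For \refitem{item:veewedge:add} the candidate is $(a\vee b)+c$; here one first checks that $a+c$ and $b+c$ commute (using $c \in \{a,b\}'$), that $2\big((a\vee b)+c\big) \ge (a+c)+(b+c)$, and that the identity reduces, after cancelling the terms involving $c$ (which commutes with both $a\vee b$ and $a+b$), precisely to $s^2 + ab = s(a+b)$.

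The only genuinely delicate point, and the step I expect to require the most care, is keeping track of the bicommutant memberships. The algebraic identities and order inequalities are routine once the right candidate is written down, but one must ensure that the candidate actually lies in the bicommutant demanded by Definition~\ref{definition:veewedge}. For items \refitem{item:veewedge:kom}--\refitem{item:veewedge:neg} this is automatic, since negation and nonzero scaling do not alter commutants; for item \refitem{item:veewedge:add}, however, the membership $(a\vee b)+c \in \{a+c,b+c\}''$ need not hold in general, which is exactly why it is imposed as a hypothesis. With that hypothesis granted, the verification goes through, and the uniqueness from Proposition~\ref{proposition:supinfchar} upgrades each verified candidate to the asserted equality.
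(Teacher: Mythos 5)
Your proposal is correct and follows essentially the same route as the paper: the same candidate $a+b-(a\vee b)$ for the infimum (with the observation that the identity $x^2+ab=x(a+b)$ is preserved under $x\mapsto a+b-x$), the same direct verifications of the defining conditions for items \textit{i.)}--\textit{iv.)}, and the same appeal to the uniqueness supplied by Proposition~\ref{proposition:supinfchar}. Your explicit attention to the bicommutant memberships, and to why they are automatic except in item \textit{iv.)}, matches the role of the extra hypothesis in the statement.
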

\begin{proof}
  If $x \in \{a,b\}'' \cap \A_\Hermitian$ fulfils $x^2 + ab = x(a+b)$, then also
  $y \coloneqq a+b-x \in \{a,b\}'' \cap \A_\Hermitian$ fulfils $y^2 + ab = y(a+b)$.
  Especially using $x = a\vee b$ and $x = a\wedge b$ it follows that $a\vee b$ exists
  if and only if $a\wedge b$ exists and that $(a\vee b) + (a\wedge b) = a+b$.
  As a consequence, $(a\vee b)(a\wedge b) = (a\vee b)(a+b)-(a\vee b)^2 = ab$.
  Checking that \refitem{item:veewedge:kom}, \refitem{item:veewedge:mult} and \refitem{item:veewedge:neg}
  hold is easy and for part \refitem{item:veewedge:add} one essentially only needs
  to verify that
  \begin{align*}
    \big((a\vee b) + c\big)^2 + (a+c)(b+c) 
    &= 
    (a\vee b)^2 + ab + 2c (a\vee b) + (a+b+2c)c \\
    &= 
    (a+b)(a\vee b) + 2c (a\vee b) + (a+b+2c)c \\
    &=
    (a+b+2c)\big((a\vee b) + c\big)
    .
  \end{align*}
\end{proof}
With respect to part \refitem{item:veewedge:add} we note that the conditions
$(a\vee b) + c, (a\wedge b) + c \in \{a+c,b+c\}''$ are superfluous
if it is a priori known that $(a+c)\vee(b+c)$ and $(a+c)\wedge(b+c)$ exist, i.e.\ especially
if $\A$ is a $\Phi^*$\=/algebra. This is due to the observation that $(a\vee b) + c$ and
$(a\wedge b) + c$ with $c\in \{a,b\}' \cap \A_\Hermitian$ are the supremum and infimum,
respectively, of $a+c$ and $b+c$ in $V \coloneqq \{a,b\}'\cap \{a+c,b+c\}' \cap \A_\Hermitian$,
so the discussion under Definition~\ref{definition:veewedge} applies.

One important special case of these suprema in ordered $^*$\=/algebras are absolute values:
\begin{definition} \label{definition:abs}
  Let $\A$ be a radical Archimedean ordered $^*$\=/algebra and $a\in \A_\Hermitian$, then the \neu{absolute value}
  of $a$ is defined (if it exists) as the element $\abs{a} \coloneqq a \vee (-a)$. If the absolute value
  exists, then one also defines the \neu{positive part} $a_+ \coloneqq \frac{1}{2}(\abs{a}+a)$ and 
  the \neu{negative part} $a_- \coloneqq \frac{1}{2}(\abs{a}-a)$ of $a$.
\end{definition}
Clearly, $\abs{-a} = \abs{a}$ if $\abs{a}$ exists. By definition, the absolute value of an element 
$a\in \A_\Hermitian$ is (if it exists) the element $\abs{a} \in \{a\}''\cap \A_\Hermitian^+$ that fulfils $\abs{a}^2 = a^2$.
The earlier results about suprema and infima now show that, like for Riesz spaces, the existence of all absolute values
already implies the existence of all suprema and infima of commuting Hermitian elements:
\begin{proposition} \label{proposition:absimpliessup}
  Let $\A$ be a radical Archimedean ordered $^*$\=/algebra and $a,b\in \A_\Hermitian$
  commuting and such that $\abs{a-b}$ exists. Then $a \vee b$ and $a\wedge b$ exist and
  are given by
  \begin{equation}
    a\vee b = \frac{a+b+\abs{a-b}}{2}
    \quad\quad\text{and}\quad\quad
    a\wedge b = \frac{a+b-\abs{a-b}}{2}
    .
    \label{eq:absimpliessup}
  \end{equation}
\end{proposition}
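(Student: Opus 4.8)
The plan is to \emph{guess the formula and verify it directly} against Definition~\ref{definition:veewedge}, rather than to manipulate suprema abstractly. Concretely, I would set $x \coloneqq \tfrac{1}{2}\bigl(a+b+\abs{a-b}\bigr)$ and show that $x$ satisfies the two defining conditions for $a\vee b$. Once this is done, the formula for $a\wedge b$ drops out for free from Proposition~\ref{proposition:veewedge}: the relation $(a\vee b)+(a\wedge b)=a+b$ forces $a\wedge b = a+b-x = \tfrac{1}{2}\bigl(a+b-\abs{a-b}\bigr)$, and the existence of $a\wedge b$ is equivalent to that of $a\vee b$.

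First I would establish that $x\in\{a,b\}''\cap\A_\Hermitian$. Hermiticity is clear, since $a$, $b$ and $\abs{a-b}\in\A_\Hermitian^+$ are all Hermitian. For the bicommutant, I would use that $\{a,b\}'\subseteq\{a-b\}'$ (anything commuting with both $a$ and $b$ commutes with their difference); taking commutants reverses this inclusion to give $\{a-b\}''\subseteq\{a,b\}''$. By the characterization of the absolute value recalled after Definition~\ref{definition:abs}, we have $\abs{a-b}\in\{a-b\}''\cap\A_\Hermitian^+\subseteq\{a,b\}''$, and since $a,b\in\{a,b\}''$ and $\{a,b\}''$ is a unital subalgebra stable under real linear combinations, it follows that $x\in\{a,b\}''$.

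The order condition $2x\ge a+b$ is immediate, as $2x-(a+b)=\abs{a-b}\ge 0$. The only genuine computation is the algebraic identity $x^2+ab=x(a+b)$. Writing $s\coloneqq a+b$ and $m\coloneqq\abs{a-b}$, I note that $s$ and $m$ commute: indeed $s\in\{a-b\}'$ while $m\in\{a-b\}''$. Using the key relation $m^2=\abs{a-b}^2=(a-b)^2$ together with $(a+b)^2-(a-b)^2=4ab$, a short expansion of $x^2=\tfrac{1}{4}(s^2+2sm+m^2)$ and $xs=\tfrac{1}{2}(s^2+sm)$ collapses to $x^2+ab-xs=\tfrac{1}{4}(m^2-s^2)+ab=-ab+ab=0$. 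Since $x$ thus satisfies both defining conditions, $a\vee b$ exists and equals $x$, its uniqueness being guaranteed by Proposition~\ref{proposition:supinfchar}. I expect the only real pitfall to be the bicommutant bookkeeping — making sure that $\abs{a-b}$ genuinely lands in $\{a,b\}''$ and that $s$ and $m$ commute — since the algebraic identity itself is routine once $m^2=(a-b)^2$ is invoked.
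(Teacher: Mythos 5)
Your proof is correct and is essentially the paper's argument with the appeal to Proposition~\ref{proposition:veewedge} unfolded: the paper obtains the formula by applying the scaling and translation rules (parts \textit{ii} and \textit{iv}) to the known supremum $\abs{a-b}=(a-b)\vee(b-a)$, and the algebraic identity you verify for $x=\tfrac{1}{2}(a+b+\abs{a-b})$ is exactly the computation carried out in the proof of part \textit{iv} there. Both routes hinge on the same two facts you isolate, namely $\abs{a-b}\in\{a,b\}''$ and $\abs{a-b}^2=(a-b)^2$, so there is no substantive difference.
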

\begin{proof}
  This is just an application of Proposition~\ref{proposition:veewedge} using that
  $\abs{a-b} \in \{a,b\}''$.
\end{proof}
As immediate consequences of Propositions~\ref{proposition:veewedge} and \ref{proposition:absimpliessup} we obtain:
\begin{corollary} \label{corollary:posnegpart}
  Let $\A$ be a radical Archimedean ordered $^*$\=/algebra and $a\in \A_\Hermitian$ such that $\abs{a}$ exists. Then
  $a\vee 0 = a_+ \in A_\Hermitian^+$ and $(-a) \vee 0 = -(a\wedge 0) = a_-\in A_\Hermitian^+$ exist and fulfil $a_++a_- = a$ and $a_-a_+ = a_+a_- = 0$.
\end{corollary}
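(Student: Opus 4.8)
The plan is to read off $a_+$ and $a_-$ as the special suprema $a \vee 0$ and $(-a)\vee 0$, so that their existence, positivity, and the lattice identities all follow from the results already established, leaving only the orthogonality $a_+ a_- = 0$ to be checked by a short computation. First I would invoke Proposition~\ref{proposition:absimpliessup} for the commuting pair $a$ and $0$: since $\abs{a-0} = \abs{a}$ exists by hypothesis, it yields the existence of $a \vee 0$ and $a \wedge 0$ together with $a \vee 0 = \tfrac{1}{2}(a + \abs{a}) = a_+$ and $a \wedge 0 = \tfrac{1}{2}(a - \abs{a}) = -a_-$.

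Applying the same proposition to the commuting pair $-a$ and $0$, and using that $\abs{-a} = \abs{a}$ (noted just after Definition~\ref{definition:abs}), gives $(-a) \vee 0 = \tfrac{1}{2}(\abs{a} - a) = a_-$, which agrees with $-(a \wedge 0)$ by part~\refitem{item:veewedge:neg} of Proposition~\ref{proposition:veewedge}. For positivity I would note that, by Proposition~\ref{proposition:supinfchar}, $a \vee 0$ is the supremum of $a$ and $0$ in $\{a\}' \cap \A_\Hermitian$ and is therefore an upper bound of $0$, so $a_+ = a \vee 0 \ge 0$; the same reasoning applied to $-a$ gives $a_- \ge 0$, whence $a_+, a_- \in \A_\Hermitian^+$. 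The additive relations among $a$, $\abs{a}$, $a_+$ and $a_-$ are then immediate from the defining formulas $a_+ = \tfrac{1}{2}(\abs{a}+a)$ and $a_- = \tfrac{1}{2}(\abs{a}-a)$.

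It remains to treat the product, for which I would expand $4 a_+ a_- = (\abs{a}+a)(\abs{a}-a) = \abs{a}^2 - \abs{a}\,a + a\,\abs{a} - a^2$. Here the two defining properties of the absolute value recorded after Definition~\ref{definition:abs} do all the work: because $\abs{a} \in \{a\}''$, the element $\abs{a}$ commutes with $a$ and the two cross terms cancel, while $\abs{a}^2 = a^2$ kills the remaining difference, so that $a_+ a_- = 0$ and likewise $a_- a_+ = 0$. I do not anticipate a real obstacle: the statement is a genuine corollary, and every step is either an instance of an earlier proposition or elementary bookkeeping, the only substantive inputs being $\abs{a} \in \{a\}''$ and $\abs{a}^2 = a^2$.
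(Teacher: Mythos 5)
Your proposal is correct and follows essentially the same route as the paper, which offers no separate proof but declares the corollary an immediate consequence of Propositions~\ref{proposition:veewedge} and~\ref{proposition:absimpliessup}; your direct expansion of $4a_+a_-$ simply reproduces, in the special case $b=0$, the identity $(a\vee b)(a\wedge b)=ab$ from Proposition~\ref{proposition:veewedge}. The only point worth flagging is that the relation as printed, $a_++a_-=a$, is evidently a typo for $a_+-a_-=a$ (one has $a_++a_-=\abs{a}$), which your "additive relations are immediate" remark quietly papers over.
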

\begin{corollary} \label{corollary:Phistarchar}
  Let $\A$ be a radical Archimedean ordered $^*$\=/algebra, then $\A$ is a $\Phi^*$\=/algebra
  if and only if $\abs{a}$ exists for all $a\in \A_\Hermitian$.
\end{corollary}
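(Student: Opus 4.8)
The plan is to prove both implications directly, leaning on the reduction already carried out in Proposition~\ref{proposition:absimpliessup}, which expresses the supremum and infimum of a commuting pair in terms of a single absolute value.

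For the forward implication, suppose $\A$ is a $\Phi^*$\=/algebra. Given any $a\in \A_\Hermitian$, the elements $a$ and $-a$ are Hermitian and commute, so $a\vee(-a)$ exists by the defining property of $\Phi^*$\=/algebras in Definition~\ref{definition:veewedge}. By Definition~\ref{definition:abs} this supremum is exactly $\abs{a}$, so $\abs{a}$ exists for every $a\in \A_\Hermitian$.

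For the reverse implication, suppose that $\abs{a}$ exists for all $a\in \A_\Hermitian$, and let $a,b\in \A_\Hermitian$ be commuting. Then $a-b\in \A_\Hermitian$, so $\abs{a-b}$ exists by hypothesis. Proposition~\ref{proposition:absimpliessup} now guarantees that both $a\vee b$ and $a\wedge b$ exist and are given by the formulas \eqref{eq:absimpliessup}. As $a,b$ were an arbitrary commuting pair of Hermitian elements, $\A$ satisfies the defining condition of a $\Phi^*$\=/algebra.

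There is no genuine obstacle here, and I expect no difficulties in the routine verification: the substantive content has already been absorbed into Proposition~\ref{proposition:absimpliessup}, which reduces the existence of arbitrary suprema and infima of commuting Hermitian pairs to the single case of the absolute value $\abs{a-b}$. The corollary merely records the resulting equivalence, in exact analogy with the classical fact for Riesz spaces that a vector lattice is already determined by the existence of $\sup\{r,-r\}$ for all $r$.
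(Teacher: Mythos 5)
Your proof is correct and follows exactly the route the paper intends: the corollary is stated there as an immediate consequence of Propositions~\ref{proposition:veewedge} and \ref{proposition:absimpliessup}, with the forward direction coming from the definition of $\abs{a}$ as $a\vee(-a)$ and the reverse direction from Proposition~\ref{proposition:absimpliessup} applied to $\abs{a-b}$. Nothing is missing.
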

One motivation to study $\Phi^*$\=/algebras is that they indeed are a non-commutative
generalization of $\Phi$\=/algebras:
\begin{proposition} \label{proposition:comphistar}
  Let $\mathcal{R}$ be a $\Phi$\=/algebra, then its complexification 
  $\A \coloneqq \mathcal{R} \otimes \CC$ with $^*$\=/involution and multiplication defined by
  $(r\otimes \lambda)^* \coloneqq r \otimes \cc{\lambda}$
  and $(r\otimes \lambda) (s\otimes \mu) \coloneqq rs \otimes \lambda \mu$
  for all $r,s\in \mathcal{R}$ and all $\lambda, \mu \in \CC$ is a commutative $\Phi^*$\=/algebra.
  Conversely, if $\A$ is a commutative $\Phi^*$\=/algebra, then its real unital subalgebra
  $\A_\Hermitian$ is a $\Phi$\=/algebra.
\end{proposition}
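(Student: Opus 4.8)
The plan is to treat the two directions separately, using Corollary~\ref{corollary:Phistarchar} to reduce the $\Phi^*$\=/property to the existence of absolute values, so that the essentially lattice-theoretic content stays concentrated in a few algebraic identities. For the first direction I would begin by making the $^*$\=/structure explicit: writing a general element of $\A=\R\otimes\CC$ uniquely as $r\otimes 1 + s\otimes\I$ with $r,s\in\R$, the involution acts by $(r\otimes 1 + s\otimes\I)^* = r\otimes 1 - s\otimes\I$, so that $\A_\Hermitian = \R\otimes 1$ is canonically identified with $\R$ as a real unital algebra, and I equip it with the order of $\R$, i.e.\ $\A_\Hermitian^+ := \R^+$. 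Commutativity of $\A$ is inherited from that of $\R$ (recall that $\Phi$\=/algebras are automatically commutative). To see that $\A$ is an ordered $^*$\=/algebra I verify the three cone conditions~\eqref{eq:admissibleCone}: closedness under positive linear combinations and $\Unit\in\A_\Hermitian^+$ are immediate, and for $d = r\otimes 1 + s\otimes\I$ and $a\in\A_\Hermitian^+$ a short computation in the commutative algebra gives $d^*a\,d = (r^2+s^2)\,a$, which lies in $\R^+$ because squares are positive in $\R$ (using $(\sup\{t,-t\})^2=t^2$) and products of positive elements are positive. Antisymmetry and the Archimedean property are inherited directly from the Riesz space $\R$.

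It then remains to show that this $\A$ is radical and that all absolute values exist. For radicality, suppose $a,b\in\A_\Hermitian\cong\R$ (commuting automatically), $a\ge\epsilon\Unit$ and $ab\ge 0$. Decomposing $b = b_+ - b_-$ into its Riesz positive and negative parts, one has $b_+b_- = 0$ (the relation noted after the definition of $\Phi$\=/algebras), hence $b\,b_- = -b_-^2$. Multiplying $ab\ge 0$ by the positive element $b_-$ yields $-a\,b_-^2 \ge 0$, while $a\ge\epsilon\Unit$ forces $a\,b_-^2 \ge \epsilon b_-^2\ge 0$; together these give $b_-^2 = 0$, so $b_- = 0$ by Proposition~\ref{proposition:nonilpotent} and thus $b = b_+\ge 0$. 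For absolute values, given $a\in\A_\Hermitian$ the Riesz absolute value $\sup\{a,-a\}\in\R^+$ satisfies $(\sup\{a,-a\})^2 = a^2$ and lies in $\{a\}''=\A$ by commutativity, so it is exactly the element required by Definition~\ref{definition:abs}; hence $\abs{a}$ exists for every $a$ and $\A$ is a $\Phi^*$\=/algebra by Corollary~\ref{corollary:Phistarchar}.

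For the converse, let $\A$ be a commutative $\Phi^*$\=/algebra. Then $\A_\Hermitian$ is a real unital associative algebra (closed under the product since $(ab)^* = b^*a^* = ba = ab$ for Hermitian $a,b$), it is Archimedean, and products of positive elements are positive by Corollary~\ref{corollary:radicalproducts}. Because $\A$ is commutative, every pair of Hermitian elements commutes, so $a\vee b$ and $a\wedge b$ exist and, by Proposition~\ref{proposition:supinfchar}, are the supremum and infimum of $a,b$ in $\{a,b\}'\cap\A_\Hermitian = \A_\Hermitian$; thus $\A_\Hermitian$ is a Riesz space. The only remaining point is the multiplicative $\Phi$\=/algebra axiom: for $r,s,t\in\A_\Hermitian^+$ with $\inf\{r,s\}=0$, show $\inf\{rt,s\}=0$. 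From $r\wedge s = 0$ and $(r\vee s)(r\wedge s) = rs$ (Proposition~\ref{proposition:veewedge}) one gets $rs=0$. Setting $y := (rt)\wedge s$, one has $0\le y\le s$ and $0\le y\le rt$; multiplying $y\le s$ by $r\ge 0$ gives $0\le ry\le rs = 0$, so $ry = 0$, and then multiplying $y\le rt$ by $y\ge 0$ gives $0\le y^2\le y\,rt = (ry)t = 0$, whence $y^2=0$ and $y=0$ by Proposition~\ref{proposition:nonilpotent}. This is precisely $\inf\{rt,s\}=0$, and $\inf\{tr,s\}=0$ follows by commutativity, so $\A_\Hermitian$ is a $\Phi$\=/algebra.

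The routine verifications (the cone conditions and the inheritance of the Archimedean and antisymmetry properties) are straightforward; the substance lies in the two algebraic compatibility statements, namely radicality in the first direction and the $\Phi$\=/algebra multiplication axiom in the second, and in both the decisive move is to reduce the claim to an identity $z^2=0$ for a Hermitian $z$ and then invoke the absence of nonzero nilpotents (Proposition~\ref{proposition:nonilpotent}). I expect the multiplication axiom of the converse to be the main obstacle, as it is the one place where the interplay between the lattice operations and the product must be genuinely controlled, though the observations $b_+b_-=0$ and $rs=0$ make it manageable.
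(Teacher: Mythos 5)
Your proposal is correct and follows essentially the same route as the paper: radicality of $\R\otimes\CC$ via the decomposition $b=b_+-b_-$ with $b_+b_-=0$ and Proposition~\ref{proposition:nonilpotent}, absolute values from $(\sup\{a,-a\})^2=a^2$, and the converse via Proposition~\ref{proposition:supinfchar}, Corollary~\ref{corollary:radicalproducts}, Proposition~\ref{proposition:veewedge} and Proposition~\ref{proposition:nonilpotent}. The only cosmetic difference is in the multiplication axiom of the converse, where you first derive $ry=0$ and then bound $y^2\le y\,rt=(ry)t=0$, whereas the paper bounds $\big((rt)\wedge s\big)^2\le\big((rt)\wedge s\big)\big((rt)\vee s\big)=rts=0$ directly; both reduce to the same nilpotency argument.
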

\begin{proof}
  First let $\mathcal{R}$ be a $\Phi$\=/algebra and $\A \coloneqq \mathcal{R} \otimes \CC$.
  Then it is clear that $\A$ is a commutative Archimedean ordered $^*$\=/algebra, and it is also radical:
  Given two commuting $a,b \in \A_\Hermitian \cong \mathcal{R}$
  such that $a \ge \epsilon \Unit$ for some $\epsilon \in {]0,\infty[}$ and $0 \le ab$, then write $b_+ \coloneqq \sup\{b,0\}$ and 
  $b_-\coloneqq \sup\{-b,0\}$. Note that it is not yet clear that $b_+$ and $b_-$
  are the positive and negative part of $b$ like in Definition~\ref{definition:abs}, but it
  follows from the general calculation rules in Riesz spaces and $\Phi$\=/algebras that
  $b = b_+-b_-$ and $b_+b_- = 0$. Consequently, $0 \le b_-ab = -b_- a b_- \le - \epsilon (b_-)^2 \le 0$, so $(b_-)^2 = 0$.
  Proposition~\ref{proposition:nonilpotent} now shows that $b_- = 0$ and therefore $b = b_+ \ge 0$.
  As the order-theoretic absolute value $\abs{a} \coloneqq\sup\{a,-a\} \in \A_\Hermitian^+$
  of any $a\in\A_\Hermitian$ indeed fulfils $\abs{a}^2 = a^2$ by the calculation rules in $\Phi$\=/algebras,
  it also describes the absolute value as in Definition~\ref{definition:abs} and therefore $\A$
  is a $\Phi^*$\=/algebra by the previous Corollary~\ref{corollary:Phistarchar}.
  
  Now let $\A$ be an arbitrary commutative $\Phi^*$\=/algebra. Then $\A_\Hermitian$ is a 
  real commutative unital associative algebra and a Riesz space by Proposition~\ref{proposition:supinfchar}.
  Corollary~\ref{corollary:radicalproducts}
  shows that $ab \in \A_\Hermitian^+$ for all $a,b\in \A_\Hermitian^+$. Given
  $a,b,c\in\A_\Hermitian^+$ with $\inf\{a,b\} = 0$, then $a\wedge b = \inf\{a,b\} = 0$ and thus
  $ab = 0$. It follows that
  $0 \le (ac\wedge b)^2 \le (ac\wedge b)(ac \vee b) = acb = 0$
  holds by Corollary~\ref{corollary:radicalproducts} and Proposition~\ref{proposition:veewedge}, so $(ac\wedge b)^2 = 0$.
  Proposition~\ref{proposition:nonilpotent} now shows that $ac\wedge b = 0$
  and thus $\A_\Hermitian$ is a $\Phi$\=/algebra.
\end{proof}
Non-commutative examples of $\Phi^*$\=/algebras will be described in Sections~\ref{sec:su} and \ref{sec:example}.

Another interesting observation about $\Phi^*$\=/algebras is that injective positive
unital $^*$\=/homomorphisms between them are automatically order embeddings. This is roughly 
similar to the case of $^*$\=/algebras endowed with a Fréchet-topology,
where surjective continuous linear maps are automatically open by the open mapping
theorem:
\begin{proposition} \label{proposition:openmapping}
  Let $\Psi\colon \A \to \algebra{B}$ be an injective positive unital $^*$\=/homomorphism
  from a $\Phi^*$\=/algebra $\A$ to an ordered $^*$\=/algebra $\algebra{B}$,
  then $\Psi$ is automatically an order embedding.
\end{proposition}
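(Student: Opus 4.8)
The plan is to establish the order-reflecting property, since $\Psi$ is injective and increasing by hypothesis, and being an order embedding requires in addition only that $\Psi(a) \le \Psi(b)$ imply $a \le b$ for $a,b \in \A_\Hermitian$. Because $\Psi$ is linear and both orders are translation-invariant, this reduces to the single implication: if $c \in \A_\Hermitian$ satisfies $\Psi(c) \in \B_\Hermitian^+$, then $c \in \A_\Hermitian^+$. Applying this to $c \coloneqq b-a$ recovers the general statement.

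So I would fix $c\in \A_\Hermitian$ with $\Psi(c) \ge 0$. Since $\A$ is a $\Phi^*$\=/algebra, the absolute value $\abs{c} = c \vee (-c)$ exists, and Corollary~\ref{corollary:posnegpart} furnishes the positive and negative parts with $c_+, c_- \in \A_\Hermitian^+$, $c = c_+ - c_-$ and $c_+c_- = c_-c_+ = 0$. The goal is then to prove $c_- = 0$, for then $c = c_+ \ge 0$.

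The key device is the sign-reversing identity $c_- c\, c_- = -c_-^3$, immediate from $c = c_+-c_-$ and $c_-c_+ = 0$. On the one hand, in $\A$ the positive commuting elements $c_-$ and $c_-^2$ satisfy $c_-^3 \ge 0$ by Corollary~\ref{corollary:radicalproducts} (available because $\A$ is radical), and positivity of $\Psi$ gives $\Psi(c_-^3) \ge 0$. On the other hand, conjugating the hypothesis $\Psi(c) \ge 0$ by the Hermitian element $\Psi(c_-)$ and using that $\Psi$ is a $^*$\=/homomorphism yields $-\Psi(c_-^3) = \Psi(c_-)\Psi(c)\Psi(c_-) \ge 0$, i.e.\ $\Psi(c_-^3) \le 0$. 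As $\B$ is an ordered (hence anti-symmetrically ordered) $^*$\=/algebra, these two inequalities force $\Psi(c_-^3) = 0$; injectivity of $\Psi$ then gives $c_-^3 = 0$, so $c_-$ is a nilpotent Hermitian element of the Archimedean ordered $^*$\=/algebra $\A$, whence $c_- = 0$ by Proposition~\ref{proposition:nonilpotent}.

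The one point where care is genuinely needed is that $\B$ carries no structure beyond that of an ordered $^*$\=/algebra; in particular one cannot argue directly inside $\B$ that a positive element with non-positive cube must vanish. The whole strategy is therefore to extract from the identity $c_- c\, c_- = -c_-^3$ just the single equality $\Psi(c_-^3) = 0$ in $\B$, using only anti-symmetry of its order, and to transport the remaining work back into the well-behaved algebra $\A$ through injectivity, where the Archimedean property removes the nilpotency via Proposition~\ref{proposition:nonilpotent}.
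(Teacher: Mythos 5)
Your proposal is correct and follows essentially the same route as the paper: reduce to showing $\Psi(c)\ge 0$ implies $c\ge 0$, decompose $c=c_+-c_-$ via Corollary~\ref{corollary:posnegpart}, derive $\Psi(c_-)^3=0$ from the two opposing inequalities, and conclude $c_-=0$ by injectivity and Proposition~\ref{proposition:nonilpotent}. The only cosmetic difference is that you invoke Corollary~\ref{corollary:radicalproducts} for $c_-^3\ge 0$, where the axiom $d^*b\,d\ge 0$ with $d=b=c_-$ already suffices.
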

\begin{proof}
  Let $a\in \A_\Hermitian$ with $\Psi(a) \ge 0$ be given. Then $a = a_+ - a_-$
  with $a_+,a_- \in \A_\Hermitian^+$ and $a_+ a_- = 0$ by Corollary~\ref{corollary:posnegpart},
  so on the one hand 
  $(a_-)^3 \ge 0$ implies $\Psi(a_-)^3 \ge 0$, and on the other,
  $(a_-)^3 = -\big(a_-aa_-\big)$ implies $\Psi(a_-)^3 = -\Psi(a_-)\,\Psi(a)\,\Psi(a_-) \le 0$,
  so $\Psi(a_-)^3 = 0$. As $\Psi$ is injective, it follows that $(a_-)^3 = 0$ and thus $a_- = 0$
  by Proposition~\ref{proposition:nonilpotent}. So $a = a_+ \ge 0$ and $\Psi$ is an order embedding.
\end{proof}
Moreover, positive unital $^*$\=/homomorphisms between $\Phi^*$\=/algebras
are compatible with $\vee$, $\wedge$ and the absolute value:
\begin{lemma} \label{lemma:sqrtUnique}
  Let $\A$ be a radical Archimedean ordered $^*$\=/algebra and $a\in \A_\Hermitian^+$,
  $b \in \{a\}'' \cap \A_\Hermitian^+$ and $c \in \{a\}' \cap \A_\Hermitian^+$ such that
  $b^2 = c^2$, then $b=c$ and especially $c \in \{a\}''$.
\end{lemma}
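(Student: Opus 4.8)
The plan is to reduce the statement to an application of Proposition~\ref{proposition:radicalOrderSquare}, whose comparison of squares for commuting positive elements is exactly tailored to this situation. The only genuine preliminary is to establish that $b$ and $c$ commute, since Proposition~\ref{proposition:radicalOrderSquare} has commutativity as a hypothesis. This follows immediately from the commutant conditions: because $c \in \{a\}'$ and $b \in \{a\}''$, and since $\{a\}''$ consists precisely of those elements commuting with every element of $\{a\}'$, we conclude $bc = cb$.

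With commutativity in hand, I would apply Proposition~\ref{proposition:radicalOrderSquare} twice, exchanging the roles of $b$ and $c$. First, taking the nonnegative element to be $b$ and the arbitrary Hermitian element to be $c$: from $c^2 = b^2 \le b^2$ we obtain $-b \le c \le b$, and in particular $c \le b$. Symmetrically, taking the nonnegative element to be $c$ and comparing with $b$: from $b^2 = c^2 \le c^2$ we obtain $-c \le b \le c$, and in particular $b \le c$. Both applications are legitimate because $b, c \in \A_\Hermitian^+$ commute and $\A$ is a radical Archimedean ordered $^*$\=/algebra.

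Combining $c \le b$ and $b \le c$ and using that the order on the Hermitian elements of an ordered $^*$\=/algebra is antisymmetric yields $b = c$. The final assertion $c \in \{a\}''$ is then automatic, since $c$ equals $b$, which lies in $\{a\}''$ by assumption.

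There is essentially no hard step here: the content is entirely in noticing that the asymmetric hypotheses $b \in \{a\}''$ and $c \in \{a\}'$ conspire to force $b$ and $c$ to commute, after which the squared-order equivalence of Proposition~\ref{proposition:radicalOrderSquare} does all the work. If any subtlety arises, it is only in confirming that the direction of Proposition~\ref{proposition:radicalOrderSquare} being invoked (the implication from $b^2 \le a^2$ to $-a \le b \le a$) applies with the (trivial) inequality $b^2 \le b^2$; this is unproblematic.
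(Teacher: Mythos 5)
Your argument is correct and is essentially the paper's own proof: the paper also notes that $b \in \{a\}''$ and $c \in \{a\}'$ force $b$ and $c$ to commute, and then invokes Proposition~\ref{proposition:radicalOrderSquare} on $b^2 = c^2$ to conclude $b = c$. You have merely spelled out the double application of that proposition and the use of antisymmetry, which the paper leaves implicit.
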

\begin{proof}
  Note that $b$ and $c$ commute, so $b^2 = c^2$ implies $b=c$ by Proposition~\ref{proposition:radicalOrderSquare}.
\end{proof}
\begin{proposition} \label{proposition:morphveewedgeabs}
  Let $\Psi \colon \A \to \algebra{B}$ be a positive unital $^*$\=/homomorphism
  between two $\Phi^*$\=/algebras $\A$, $\algebra{B}$ and $a,\tilde{a}\in \A_\Hermitian$ commuting.
  Then 
  \begin{equation*}
    \Psi(a) \vee \Psi(\tilde{a}) = \Psi(a\vee \tilde{a})
    ,\quad\quad
    \Psi(a) \wedge \Psi(\tilde{a}) = \Psi(a\wedge \tilde{a})
    \quad\quad\text{and}\quad\quad
    \abs{\Psi(a)} = \Psi(\abs{a})
  \end{equation*}
  hold.
\end{proposition}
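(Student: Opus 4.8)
The plan is to reduce all three identities to the single statement $\abs{\Psi(a)} = \Psi(\abs{a})$ for an arbitrary Hermitian $a$, and then to prove that one identity via the uniqueness of positive square roots among commuting elements. The point of this reduction is that the defining conditions for $\vee$, $\wedge$ and $\abs{\argument}$ all involve bicommutants, which behave badly under homomorphic images, whereas the square-root uniqueness only needs two positive elements to commute.

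First I would establish the identity for absolute values. Since $\A$ and $\B$ are $\Phi^*$-algebras, all absolute values exist by Corollary~\ref{corollary:Phistarchar}; in particular $\abs{a} \in \{a\}'' \cap \A_\Hermitian^+$ with $\abs{a}^2 = a^2$, and $\abs{\Psi(a)} \in \{\Psi(a)\}'' \cap \B_\Hermitian^+$ with $\abs{\Psi(a)}^2 = \Psi(a)^2$. Now consider $\Psi(\abs{a})$: positivity of $\Psi$ gives $\Psi(\abs{a}) \ge 0$, and multiplicativity gives $\Psi(\abs{a})^2 = \Psi(\abs{a}^2) = \Psi(a^2) = \Psi(a)^2 = \abs{\Psi(a)}^2$, so $\Psi(\abs{a})$ and $\abs{\Psi(a)}$ are two positive elements of $\B$ with the same square. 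They moreover commute: since $\abs{a}$ commutes with $a$, multiplicativity of $\Psi$ yields $\Psi(\abs{a}) \in \{\Psi(a)\}'$, whereas $\abs{\Psi(a)} \in \{\Psi(a)\}''$ by definition. Hence Lemma~\ref{lemma:sqrtUnique}, applied in $\B$ with $\abs{\Psi(a)}$ in the role of the distinguished positive element (equivalently, Proposition~\ref{proposition:radicalOrderSquare} applied to two commuting positive elements with equal squares), forces $\abs{\Psi(a)} = \Psi(\abs{a})$.

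Finally I would deduce the statements for $\vee$ and $\wedge$. Applying $\Psi$ to the formula $a \vee \tilde{a} = \tfrac{1}{2}\big(a + \tilde{a} + \abs{a - \tilde{a}}\big)$ of Proposition~\ref{proposition:absimpliessup} and using linearity together with $\Psi(\abs{a - \tilde{a}}) = \abs{\Psi(a - \tilde{a})} = \abs{\Psi(a) - \Psi(\tilde{a})}$ yields $\Psi(a \vee \tilde{a}) = \tfrac{1}{2}\big(\Psi(a) + \Psi(\tilde{a}) + \abs{\Psi(a) - \Psi(\tilde{a})}\big)$, which equals $\Psi(a) \vee \Psi(\tilde{a})$ by the same Proposition~\ref{proposition:absimpliessup} read in $\B$; here $\Psi(a)$ and $\Psi(\tilde{a})$ commute because $a$ and $\tilde{a}$ do, so that right-hand side is indeed defined. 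The identity for $\wedge$ follows in the same way from the second formula in \eqref{eq:absimpliessup}, or from $(a \vee \tilde{a}) + (a \wedge \tilde{a}) = a + \tilde{a}$ of Proposition~\ref{proposition:veewedge}.

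The main obstacle is precisely the commutation step in the second paragraph. A direct verification of the defining equations for $\Psi(a \vee \tilde{a})$ would require $\Psi(a \vee \tilde{a}) \in \{\Psi(a), \Psi(\tilde{a})\}''$, and bicommutant membership is not transported by $\Psi$, so that approach stalls. Routing everything through absolute values circumvents the difficulty, because for square roots it is enough that the two positive elements commute, which is immediate from $\Psi(\abs{a}) \in \{\Psi(a)\}'$ and $\abs{\Psi(a)} \in \{\Psi(a)\}''$.
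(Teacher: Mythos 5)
Your proposal is correct and follows essentially the same route as the paper: reduce the $\vee$ and $\wedge$ identities to the absolute-value identity via Proposition~\ref{proposition:absimpliessup}, then obtain $\abs{\Psi(a)} = \Psi(\abs{a})$ from $\Psi(\abs{a}) \in \{\Psi(a)\}' \cap \algebra{B}_\Hermitian^+$, $\abs{\Psi(a)} \in \{\Psi(a)\}'' \cap \algebra{B}_\Hermitian^+$ and equality of squares, using Lemma~\ref{lemma:sqrtUnique}. You merely spell out the reduction and the commutation argument in more detail than the paper does.
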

\begin{proof}
  As $\vee$ and $\wedge$ can be expressed using the absolute value, it is enough to
  show that $\abs{\Psi(a)} = \Psi(\abs{a})$ holds. It is easy to check that
  $\Psi(\abs{a}) \in \{\Psi(a)\}' \cap \algebra{B}^+_\Hermitian$ and that
  $\Psi(\abs{a})^2 = \Psi(a)^2$. As it is already known that 
  $\abs{\Psi(a)} \in \{\Psi(a)\}'' \cap \algebra{B}^+_\Hermitian$ exists,
  it follows from the previous Lemma~\ref{lemma:sqrtUnique} that
  $\abs{\Psi(a)} = \Psi(\abs{a})$.
\end{proof}

As a last result we note that in the uniformly complete case, the existence of infima
is helpful for the construction of a multiplicative inverse:
\begin{lemma} \label{lemma:weakOrderUnit}
  Let $\A$ be a radical Archimedean ordered $^*$\=/algebra, $a\in \A_\Hermitian$ and $\lambda\in{]0,\infty[}$.
  If $a\wedge \lambda\Unit$ exists, then it fulfils the estimate $a \le (a\wedge \lambda\Unit) + a^2 / (4\lambda)$.
\end{lemma}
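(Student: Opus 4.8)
The plan is to reduce the claimed estimate to the positivity of a single square. Write $m \coloneqq a\wedge\lambda\Unit$, which exists by assumption. Since $\lambda\Unit$ is central, $m \in \{a,\lambda\Unit\}''\cap\A_\Hermitian = \{a\}''\cap\A_\Hermitian$, and by Definition~\ref{definition:veewedge} it satisfies the algebraic identity $m^2 + \lambda a = m(a+\lambda\Unit) = ma + \lambda m$. In particular $m$ and $a$ commute, and the identity rearranges to
\begin{equation*}
  m^2 - ma = \lambda m - \lambda a.
\end{equation*}

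The main computation is then to evaluate the difference $m + a^2/(4\lambda) - a$, whose positivity is exactly the assertion $a \le (a\wedge\lambda\Unit) + a^2/(4\lambda)$. Multiplying by $4\lambda > 0$ and substituting the rearranged identity, I would compute
\begin{equation*}
  4\lambda\Big(m + \frac{a^2}{4\lambda} - a\Big)
  = 4\lambda m - 4\lambda a + a^2
  = 4\big(m^2 - ma\big) + a^2
  = 4m^2 - 4ma + a^2
  = (2m-a)^2 ,
\end{equation*}
so that $m + a^2/(4\lambda) - a = (2m-a)^2/(4\lambda)$.

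Since $2m - a \in \A_\Hermitian$, its square equals $(2m-a)^*(2m-a) \in \A_\Hermitian^+$, and scaling a positive element by $1/(4\lambda)$ keeps it positive; this yields $m + a^2/(4\lambda) - a \ge 0$, which is the desired estimate. I do not expect any genuine obstacle: the only insight required is to clear the denominator $4\lambda$ and, after inserting the defining identity for $m$, recognize the perfect square $(2m-a)^2$. It is worth remarking that the infimum property $m \le a$ is not even needed for the argument; everything follows from the algebraic relation defining $a\wedge\lambda\Unit$ together with the fact that squares of Hermitian elements are positive.
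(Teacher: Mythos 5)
Your proof is correct and is essentially the paper's argument in streamlined form: the paper combines the same defining identity $m^2+\lambda a=m(a+\lambda\Unit)$ with Lemma~\ref{lemma:csersatz} for $\chi=\sqrt{2}$, and that application of Lemma~\ref{lemma:csersatz} is exactly the positivity of $(\chi^{-1}a-\chi m)^{*}(\chi^{-1}a-\chi m)=\tfrac{1}{2}(2m-a)^2$, i.e.\ your perfect square, so the two proofs coincide up to clearing the denominator $4\lambda$. Your closing remark is also consistent with the paper, whose proof likewise uses only the algebraic identity and the commutativity of $a$ with $a\wedge\lambda\Unit$, never the inequality $2(a\wedge\lambda\Unit)\le a+\lambda\Unit$.
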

\begin{proof}
  As $a$ and $a\wedge \lambda\Unit$ commute, 
  $a(a\wedge \lambda \Unit) = \big(a(a\wedge \lambda \Unit) + (a\wedge \lambda \Unit)a\big) / 2 \le a^2/4 + (a\wedge \lambda\Unit)^2$
  holds by Lemma~\ref{lemma:csersatz} with $\chi \coloneqq \sqrt{2}$.
  From $(a\wedge \lambda \Unit)^2 + \lambda a = (a + \lambda \Unit) (a\wedge \lambda \Unit)$ it
  now follows that $(a\wedge \lambda \Unit)^2 + \lambda a \le a^2/4 + (a\wedge \lambda\Unit)^2 + \lambda (a\wedge \lambda \Unit)$.
\end{proof}
\begin{lemma} \label{lemma:radicalinverseordering}
  Let $\A$ be a radical Archimedean ordered $^*$\=/algebra and $a,b\in \A^+_\Hermitian$
  with $a\le b$ commuting and invertible, then $a^{-1} \ge b^{-1}$.
\end{lemma}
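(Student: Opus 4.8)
The plan is to reduce everything to the algebraic identity $a^{-1}-b^{-1} = a^{-1}(b-a)\,b^{-1}$ and then read off positivity from the behaviour of products of commuting positive elements. First I would record the commutativity that makes this work: since the multiplicative inverse of an invertible element lies in its bicommutant, we have $a^{-1}\in\{a\}''$ and $b^{-1}\in\{b\}''$, and because $a$ and $b$ commute one checks that $a,b,a^{-1},b^{-1}$ all commute pairwise. In particular $b-a$ commutes with each of $a^{-1}$ and $b^{-1}$, so the whole computation takes place inside a commutative subalgebra.

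Next I would establish that $a^{-1}$ and $b^{-1}$ are themselves positive. Here the point is that $a$ and $b$ are only assumed invertible, not coercive, so rather than invoking Lemma~\ref{lemma:inverseordering} I would argue directly: $a^{-1}$ is Hermitian because $a$ is, and the defining axiom $d^*a\,d\ge 0$ of ordered $^*$\=/algebras, applied with $d\coloneqq a^{-1}$, gives $a^{-1}=(a^{-1})^*a\,a^{-1}\ge 0$ since $a\ge 0$. The same argument yields $b^{-1}\ge 0$.

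With these facts in hand the conclusion is immediate. Expanding the right-hand side, $a^{-1}(b-a)\,b^{-1} = a^{-1}b\,b^{-1}-a^{-1}a\,b^{-1}=a^{-1}-b^{-1}$, and using that all factors commute this equals $(a^{-1}b^{-1})(b-a)$. Both factors are positive: $a^{-1}b^{-1}$ because $a^{-1}$ and $b^{-1}$ are commuting positive elements, so Corollary~\ref{corollary:radicalproducts} applies, and $b-a\ge 0$ by hypothesis. Since these two positive elements commute, one further application of Corollary~\ref{corollary:radicalproducts} gives $a^{-1}-b^{-1}=(a^{-1}b^{-1})(b-a)\ge 0$, that is, $a^{-1}\ge b^{-1}$.

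I do not expect a genuine obstacle here: the statement is essentially the antimonotonicity of inversion for commuting positive invertible elements, which follows purely formally once the commuting-products corollary is available. The only points requiring care are that all the relevant elements genuinely commute, so that Corollary~\ref{corollary:radicalproducts} is applicable, and that positivity of the inverses is secured without a coercivity hypothesis; both are handled by the bicommutant remark and the direct cone-axiom argument above.
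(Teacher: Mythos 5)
Your proposal is correct and follows essentially the same route as the paper, which also writes $a^{-1}-b^{-1}=a^{-1}(b-a)\,b^{-1}$ as a product of pairwise commuting elements of $\A_\Hermitian^+$ and invokes Corollary~\ref{corollary:radicalproducts}. You merely spell out two points the paper leaves implicit: the direct cone-axiom argument for $a^{-1},b^{-1}\ge 0$ (avoiding the coercivity hypothesis of Lemma~\ref{lemma:inverseordering}) and the double application of the corollary to handle a product of three commuting positive factors.
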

\begin{proof}
  As $b-a, a^{-1}, b^{-1} \in \{a,b\}'' \cap \A_\Hermitian^+$ are pairwise commuting,
  their product $a^{-1}(b-a)b^{-1} = a^{-1} - b^{-1}$ is positive by Corollary~\ref{corollary:radicalproducts}.
\end{proof}
\begin{proposition} \label{proposition:invconst}
  Let $\A$ be a radical and uniformly complete Archimedean ordered $^*$\=/algebra and let $a\in \A_\Hermitian$
  be a coercive element for which $a\wedge n\Unit$ exists for all $n\in \NN$, then $a$ is invertible.
\end{proposition}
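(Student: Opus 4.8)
The plan is to realise the inverse of $a$ as the limit of the inverses of the truncations $a_n \coloneqq a \wedge n\Unit$ and then to invoke Lemma~\ref{lemma:invertibleWhenComplete}. Since $a$ is coercive, say $a \ge \epsilon \Unit$ with $\epsilon \in {]0,\infty[}$, the central element $\epsilon\Unit$ is a common lower bound of $a$ and $n\Unit$ in $\{a\}' \cap \A_\Hermitian$, so $\epsilon\Unit \le a_n$ whenever $n \ge \epsilon$ by the infimum characterisation in Proposition~\ref{proposition:supinfchar}; likewise $a_n \le a$ and $a_n \le n\Unit$, and $a_n \le a_m$ for $n \le m$. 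Each $a_n$ lies in $\{a\}'' \cap \A_\Hermitian$, so the $a_n$ pairwise commute and commute with $a$, and $-n\Unit \le a_n \le n\Unit$ shows $a_n \in \A^\bd$. By Propositions~\ref{proposition:closedstuff} and \ref{proposition:Cstar} the algebra $\A^\bd$ is a $C^*$-algebra, in which the coercive element $a_n$ is invertible; hence $a_n$ is invertible in $\A$, its inverse again lies in $\{a\}''$, and Lemma~\ref{lemma:inverseordering} gives $0 \le a_n^{-1} \le \epsilon^{-1}\Unit$. The data needed to feed $\hat a \coloneqq a$, the sequence $(a_n)$ and $c \coloneqq d \coloneqq a^2$ into Lemma~\ref{lemma:invertibleWhenComplete} are then immediate: $a_n^2 \le a^2$ by Proposition~\ref{proposition:radicalOrderSquare} (since $0 \le a_n \le a$), and $0 \le a - a_n \le a^2/(4n)$ by Lemma~\ref{lemma:weakOrderUnit}, so $-\epsilon' a^2 \le a - a_n \le \epsilon' a^2$ once $n \ge 1/(4\epsilon')$. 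Everything therefore reduces to proving that $(a_n^{-1})_{n\in\NN}$ is a Cauchy sequence.

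The heart of the matter is the estimate
\[
  a_n^{-1} - a_m^{-1} \le \left(\tfrac1n - \tfrac1m\right)\Unit \qquad\text{for } n \le m .
\]
To obtain it, I first record the relation coming from the defining identity $a_n^2 + na = a_n(a+n\Unit)$ of $a_n = a \wedge n\Unit$, which rearranges to $(n\Unit - a_n)(a - a_n) = 0$. For $n \le m$ we have $0 \le a_m - a_n \le a - a_n$, and since $n\Unit - a_n \ge 0$ commutes with everything in $\{a\}''$, Corollary~\ref{corollary:radicalproducts} yields $0 \le (n\Unit - a_n)(a_m - a_n) \le (n\Unit - a_n)(a - a_n) = 0$; by antisymmetry of the order,
\[
  (n\Unit - a_n)(a_m - a_n) = 0 .
\]
Now $a_n$ and $a_m$ are commuting Hermitian elements. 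Taking $a_1 \coloneqq a_n$, $a_2 \coloneqq a_m$ and the Hermitian polynomials $n - x_1$, $m - x_2$, $x_2 - x_1$ and $\pm(n-x_1)(x_2-x_1)$, all of which evaluate to positive elements by the above (the last two by the vanishing relation), the associated semialgebraic set is $S \coloneqq \set{(u,v)\in\RR^2}{u \le n,\ v \le m,\ u \le v,\ (n-u)(v-u)=0}$. On the branch $u = n$ one has $v \in [n,m]$ and $q(u,v) \coloneqq \tfrac{m-n}{nm}uv - (v-u)$ equals $n(m-v)/m \ge 0$, while on the branch $v = u$ one gets $q = \tfrac{m-n}{nm}u^2 \ge 0$; thus $q \ge 0$ on $S$. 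Proposition~\ref{proposition:polynomialcalculus} (applicable because $\A$ is radical and Archimedean) then gives $q(a_n,a_m) \ge 0$, that is,
\[
  a_m - a_n \le \frac{m-n}{nm}\, a_n a_m .
\]
Multiplying this inequality of commuting elements by the positive element $a_n^{-1}a_m^{-1}$ (Corollary~\ref{corollary:radicalproducts}) produces exactly the displayed estimate.

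It remains to assemble the conclusion. For $n \le m$ the positive, commuting, invertible elements $a_n \le a_m$ satisfy $a_n^{-1} \ge a_m^{-1}$ by Lemma~\ref{lemma:radicalinverseordering}, so together with the estimate $0 \le a_n^{-1} - a_m^{-1} \le n^{-1}\Unit$, whence $\seminorm{\infty}{a_n^{-1} - a_m^{-1}} \le 1/n$ by Proposition~\ref{proposition:inftyseminorm}. Hence $(a_n^{-1})_{n\in\NN}$ is Cauchy and, by uniform completeness of $\A$, converges to some $e \in \A$; Lemma~\ref{lemma:invertibleWhenComplete}, applied with the data prepared above, then shows that $a$ is invertible (with $a^{-1} = e$). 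The one genuinely delicate point is this uniform bound on $a_n^{-1} - a_m^{-1}$: crude bounds on $a_m - a_n$ through $a - a_n \le a^2/(4n)$ involve the unbounded element $a^2$ and are useless for uniform convergence, so it is essential to exploit the relation $(n\Unit - a_n)(a_m - a_n) = 0$ and to phrase the estimate purely in terms of the \emph{bounded} elements $a_n$ and $a_m$ via the Positivstellensatz-type Proposition~\ref{proposition:polynomialcalculus}.
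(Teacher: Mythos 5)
Your proof is correct, and its overall skeleton coincides with the paper's: truncate to $a_n = a\wedge n\Unit$, observe via Proposition~\ref{proposition:supinfchar} that the $a_n$ are coercive elements of the $C^*$\=/algebra $\A^\bd$ and hence invertible, establish the uniform Cauchy estimate $0 \le a_n^{-1}-a_m^{-1}\le \Unit/n$ for $n\le m$, and feed everything into Lemma~\ref{lemma:invertibleWhenComplete} using $a - a_n \le a^2/(4n)$ from Lemma~\ref{lemma:weakOrderUnit}. Where you genuinely diverge is in the derivation of the Cauchy estimate, which is indeed the heart of the proof. The paper gets it by a purely order-theoretic trick: setting $b \coloneqq a_m^{-1}+\Unit/n$, it deduces $b^{-1}\le n\Unit$ and $b^{-1}\le a_m\le a$ from Lemma~\ref{lemma:radicalinverseordering}, hence $b^{-1}\le a\wedge n\Unit$ by the infimum property, hence $a_n^{-1}\le b$ by Lemma~\ref{lemma:radicalinverseordering} once more. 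You instead extract the algebraic relation $(n\Unit-a_n)(a-a_n)=0$ from the defining identity of $a\wedge n\Unit$, upgrade it to $(n\Unit-a_n)(a_m-a_n)=0$ by squeezing with Corollary~\ref{corollary:radicalproducts} and antisymmetry, and then invoke the Positivstellensatz-based Proposition~\ref{proposition:polynomialcalculus} on the two commuting elements $a_n, a_m$ to obtain $a_m-a_n\le \frac{m-n}{nm}a_na_m$, from which the estimate follows by multiplying with $a_n^{-1}a_m^{-1}$. Both routes are valid; yours is more mechanical once one has guessed the right polynomial inequality, and it showcases Proposition~\ref{proposition:polynomialcalculus} as a genuinely usable computational tool, but it leans on the Krivine--Stengle machinery where the paper's argument is elementary and self-contained. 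One cosmetic remark: your coercivity bound $\epsilon\Unit\le a_n$ ``whenever $n\ge\epsilon$'' should be cleaned up by replacing $\epsilon$ with $\min\{\epsilon,1\}$ at the outset (as the paper does), so that all $a_n$ with $n\in\NN$, not just a tail, are covered; this changes nothing of substance.
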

\begin{proof}
  Proposition~\ref{proposition:supinfchar} shows that
  $a \wedge n \Unit$ is, for every $n\in \NN$, the infimum
  of $a$ and $n\Unit$ in the commutative real unital subalgebra
  $\{a\}'' \cap \A_\Hermitian$ of $\A$. As $a$ is coercive, there exists 
  $\epsilon \in {]0,1]}$ such that $\epsilon \Unit \le a$, and then 
  $\epsilon \Unit \le a \wedge n \Unit \le n \Unit$ shows that $a \wedge n \Unit$
  is a coercive element of $\A^\bd$. By Proposition~\ref{proposition:closedstuff},
  $\A^\bd$ is uniformly complete itself, hence a $C^*$\=/algebra, and thus $a \wedge n \Unit$ is invertible.
  
  For fixed $m,n\in \NN$ with $n \le m$, the estimate $a \wedge n \Unit \le a \wedge m \Unit$
  yields $(a \wedge m \Unit)^{-1} \le (a \wedge n \Unit)^{-1}$ 
  by the previous Lemma~\ref{lemma:radicalinverseordering}.  
  Moreover, let $b \coloneqq (a\wedge m\Unit)^{-1} + \Unit / n \in \{a\}'' \cap \A_\Hermitian$,
  then $b$ is also a coercive element of $\A^\bd$, hence invertible.
  From $\Unit / n \le b$ and $(a\wedge m\Unit)^{-1} \le b$ it follows 
  that $b^{-1} \le n\Unit$ and $b^{-1} \le a \wedge m\Unit$ by the previous Lemma~\ref{lemma:radicalinverseordering}.
  So $b^{-1} \le a \wedge n\Unit$ and therefore $(a \wedge n\Unit)^{-1} \le b$ by Lemma~\ref{lemma:radicalinverseordering}
  again.
  Altogether, this shows that 
  $(a \wedge m \Unit)^{-1} \le (a \wedge n \Unit)^{-1} \le (a\wedge m\Unit)^{-1} + \Unit / n$
  for all $m,n\in \NN$ with $n \le m$, so $\NN \ni n \mapsto (a \wedge n \Unit)^{-1} \in \A_\Hermitian$
  is a Cauchy sequence with respect to the uniform metric, and thus converges.
  
  From $0 \le (a\wedge n \Unit) \le a$ it follows that $(a\wedge n \Unit)^2 \le a^2$
  by Proposition~\ref{proposition:radicalOrderSquare} and that $0 \le a - (a\wedge n \Unit)$,
  and Lemma~\ref{lemma:weakOrderUnit} shows that $a - (a\wedge n \Unit) \le a^2/(4n)$.
  So one can apply Lemma~\ref{lemma:invertibleWhenComplete} to the sequence
  $(a\wedge n\Unit)_{n\in \NN}$ with $c \coloneqq a^2$ and $d \coloneqq a^2/4$,
  which shows that $a$ is invertible and $a^{-1} = \lim_{n\to \infty} (a \wedge n \Unit)^{-1}$.
\end{proof}
\section{Square Roots} \label{sec:sqrt}
The usual way to construct absolute values is via square roots of the square.
In order to guarantee the uniqueness of the square roots, it makes sense
to discuss square roots only in radical Archimedean ordered $^*$\=/algebras,
in which Lemma~\ref{lemma:sqrtUnique} applies:
\begin{definition} \label{definition:squareroot}
  Let $\A$ be a radical Archimedean ordered $^*$\=/algebra and $a\in\A^+_\Hermitian$. The
  \neu{square root} of $a$ is (if it exists) the unique element $\sqrt{a} \in \{a\}'' \cap \A_\Hermitian^+$
  fulfilling ${\sqrt{a}\,}^2 = a$.
\end{definition}
\begin{proposition} \label{proposition:absfromsqrt}
  Let $\A$ be a radical Archimedean ordered $^*$\=/algebra and $a\in\A_\Hermitian$ and assume that
  $\sqrt{a^2}$ exists, then $\abs{a}$ exists and is given by $\abs{a} = \sqrt{a^2}$.
\end{proposition}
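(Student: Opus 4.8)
The plan is to unwind the definition of the absolute value as a special case of the operation $\vee$ and then recognize $\sqrt{a^2}$ as the element satisfying exactly the defining conditions of $a \vee (-a)$. By Definition~\ref{definition:abs}, $\abs{a} = a \vee (-a)$, and by Definition~\ref{definition:veewedge} the element $a \vee (-a)$ is (if it exists) the $x \in \{a,-a\}'' \cap \A_\Hermitian$ fulfilling $2x \ge a + (-a)$ and $x^2 + a(-a) = x\big(a + (-a)\big)$. First I would observe that commuting with $a$ is the same as commuting with $-a$, so $\{a,-a\}'' = \{a\}''$, and that the two conditions collapse to $x \ge 0$ (using that scaling by $\tfrac12 \in [0,\infty[$ preserves the order) and $x^2 = a^2$. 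Thus the goal reduces to checking that $\sqrt{a^2}$ lies in $\{a\}'' \cap \A_\Hermitian^+$ and squares to $a^2$.

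Next I would invoke the hypothesis: by Definition~\ref{definition:squareroot}, the existence of $\sqrt{a^2}$ provides an element $\sqrt{a^2} \in \{a^2\}'' \cap \A_\Hermitian^+$ with $\big(\sqrt{a^2}\big)^2 = a^2$. The two conditions $\sqrt{a^2} \ge 0$ and $\big(\sqrt{a^2}\big)^2 = a^2$ are then immediate from this. The single nontrivial point is to upgrade membership in $\{a^2\}''$ to membership in $\{a\}''$, since the absolute value is required to lie in the bicommutant of $a$, not of $a^2$.

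I expect this commutant bookkeeping to be the only step requiring care. The key remark is that every element commuting with $a$ automatically commutes with $a^2$, i.e.\ $\{a\}' \subseteq \{a^2\}'$; since forming the commutant reverses inclusions, this yields $\{a^2\}'' = \big(\{a^2\}'\big)' \subseteq \big(\{a\}'\big)' = \{a\}''$. Consequently $\sqrt{a^2} \in \{a^2\}'' \subseteq \{a\}''$, which is exactly the missing membership.

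Combining these observations, $\sqrt{a^2}$ is an element of $\{a\}'' \cap \A_\Hermitian^+$ with $\big(\sqrt{a^2}\big)^2 = a^2$, which is precisely the characterization of $a \vee (-a)$ obtained above. Hence $\abs{a} = a \vee (-a)$ exists and equals $\sqrt{a^2}$, the uniqueness of this supremum being guaranteed by Proposition~\ref{proposition:supinfchar}. There is no genuine obstacle here beyond correctly tracking the commutant inclusion; the rest is a direct comparison of definitions.
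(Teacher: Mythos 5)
Your proposal is correct and follows essentially the same route as the paper: the single substantive step in both is the commutant inclusion $\{a\}' \subseteq \{a^2\}'$, hence $\{a^2\}'' \subseteq \{a\}''$, after which $\sqrt{a^2} \in \{a\}'' \cap \A_\Hermitian^+$ with $\big(\sqrt{a^2}\big)^2 = a^2$ matches the characterization of $\abs{a}$ (which the paper records explicitly right after Definition~\ref{definition:abs}). Your extra unwinding of Definition~\ref{definition:veewedge} for the pair $(a,-a)$ is accurate but only makes explicit what the paper treats as already established.
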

\begin{proof}
  Note that $\{a\}' \subseteq \{a^2\}'$, therefore $\{a^2\}'' \subseteq \{a\}''$. So 
  $\sqrt{a^2} \in \{a^2\}'' \cap \A_\Hermitian^+ \subseteq \{a\}'' \cap \A_\Hermitian^+$
  and ${\sqrt{a^2}\,}^2 = a^2$ show that $\abs{a} = \sqrt{a^2}$ exists. 
\end{proof}
Especially if $\sqrt{a}$ exists for all positive Hermitian elements $a$ of a radical
Archimedean ordered $^*$\=/algebra $\A$, then $\A$ is a $\Phi^*$\=/algebra
and $\A_\Hermitian^+ = \A_\Hermitian^{++}$, thus every unital $^*$\=/homomorphism
$\Psi \colon \A \to \algebra{B}$ to another ordered $^*$\=/algebra $\algebra{B}$ is automatically
positive as $\Psi(a) = \Psi(\sqrt{a})^2 \in \algebra{B}^+_\Hermitian$. On such algebras,
the order is even uniquely determined, a result that generalizes the uniqueness of the norm of
$C^*$\=/algebras:
\begin{proposition} \label{proposition:uniqueorder}
  Let $\A$ be a radical Archimedean ordered $^*$\=/algebra in which $\sqrt{a}$ exists
  for all $a \in \A_\Hermitian^+$, then the order on $\A_\Hermitian$ is uniquely
  determined in the following sense: Denote the order on $\A_\Hermitian$
  by $\le$, as always. If $\preccurlyeq$ is any order on $\A_\Hermitian$
  such that $\A$ with $\preccurlyeq$ is an ordered $^*$\=/algebra, then $\le$ and $\preccurlyeq$
  coincide.
\end{proposition}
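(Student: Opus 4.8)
Both $\le$ and $\preccurlyeq$ are translation-invariant orders on the real vector space $\A_\Hermitian$, so each is determined by its cone of positive elements via $x\le y\Leftrightarrow 0\le y-x$ (and likewise for $\preccurlyeq$). The plan is therefore to show that the two cones $\A_\Hermitian^+=\set{x\in\A_\Hermitian}{0\le x}$ and $\set{x\in\A_\Hermitian}{0\preccurlyeq x}$ coincide.

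First I would dispose of the inclusion $\A_\Hermitian^+\subseteq\set{x\in\A_\Hermitian}{0\preccurlyeq x}$. Since $\sqrt{a}$ exists for every $a\in\A_\Hermitian^+$ and is itself Hermitian, we have $a=\big(\sqrt{a}\big)^*\sqrt{a}$, whence $\A_\Hermitian^+=\A_\Hermitian^{++}$ as already noted above. The set $\A_\Hermitian^{++}$ is purely algebraic, and the two defining properties $0\preccurlyeq\Unit$ and $d^*c\,d\succcurlyeq 0$ for $0\preccurlyeq c$ (cf.~\eqref{eq:admissibleCone}) guarantee that every element $\sum_n a_n^*a_n$ is $\preccurlyeq$\=/positive. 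Thus $\A_\Hermitian^+=\A_\Hermitian^{++}\subseteq\set{x\in\A_\Hermitian}{0\preccurlyeq x}$, which is exactly the implication $0\le a\Rightarrow 0\preccurlyeq a$.

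The real work is the reverse inclusion, for which I would recycle the nilpotency argument from Proposition~\ref{proposition:openmapping}. Because $\sqrt{a}$ exists for all $a\in\A_\Hermitian^+$, the algebra $\A$ is a $\Phi^*$\=/algebra and absolute values exist by Corollary~\ref{corollary:Phistarchar}; hence any $a\in\A_\Hermitian$ splits as $a=a_+-a_-$ with $a_+,a_-\in\A_\Hermitian^+$ and $a_+a_-=a_-a_+=0$ (Corollary~\ref{corollary:posnegpart}). Assume now $0\preccurlyeq a$. The $\preccurlyeq$\=/axiom $d^*a\,d\succcurlyeq 0$ applied to the Hermitian element $d\coloneqq a_-$, together with $a_-a_+=0$, gives
\begin{equation*}
  0\preccurlyeq a_-\,a\,a_- = a_-a_+a_- - a_-^3 = -a_-^3 ,
\end{equation*}
so $a_-^3\preccurlyeq 0$. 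On the other hand $a_-\in\A_\Hermitian^+$ forces $a_-^3\in\A_\Hermitian^+=\A_\Hermitian^{++}$, which by the first part means $0\preccurlyeq a_-^3$. Antisymmetry of $\preccurlyeq$ then yields $a_-^3=0$, so $a_-$ is a nilpotent Hermitian element of the \emph{original}, Archimedean ordered $^*$\=/algebra; Proposition~\ref{proposition:nonilpotent} gives $a_-=0$, and therefore $a=a_+\in\A_\Hermitian^+$, i.e. $0\le a$.

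The only genuinely delicate point is this last chain, which is the one place where the two orders actually interact: $\preccurlyeq$\=/positivity of $a$ is converted into the algebraic relation $-a_-^3\succcurlyeq 0$, the order-independence of $\A_\Hermitian^{++}$ supplies $a_-^3\succcurlyeq 0$ from the opposite side, antisymmetry of $\preccurlyeq$ collapses both to $a_-^3=0$, and finally the Archimedean property of $\le$ (via Proposition~\ref{proposition:nonilpotent}) eliminates the nilpotent $a_-$. I expect everything else to be routine; in particular no completeness is used, and $\preccurlyeq$ is never assumed Archimedean — only that it is a genuine, antisymmetric ordered $^*$\=/algebra order.
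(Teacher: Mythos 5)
Your argument is correct and is essentially the paper's own proof: the paper simply observes that $\id_\A$, viewed as a map from $(\A,\le)$ to $(\A,\preccurlyeq)$, is positive because $a=\sqrt{a}^{\,2}$, and then invokes Proposition~\ref{proposition:openmapping}, whose proof is exactly the nilpotency argument ($a_-\,a\,a_-=-a_-^3$, two-sided positivity of $a_-^3$, antisymmetry of $\preccurlyeq$, then Proposition~\ref{proposition:nonilpotent}) that you have unfolded inline. No discrepancy to report.
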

\begin{proof}
  Consider the injective unital $^*$\=/homomorphism $\id_{\A}$ as a map from $\A$ with $\le$ to $\A$
  with $\preccurlyeq$. Then $\id_{\A}$ is automatically positive due to the existence
  of square roots, and as $\A$ with $\le$ is a $\Phi^*$\=/algebra by the previous
  Proposition~\ref{proposition:absfromsqrt} and Corollary~\ref{corollary:Phistarchar}, Proposition~\ref{proposition:openmapping}
  applies and shows that $\id_\A$ is even an order embedding, i.e.\ that $\le$ and $\preccurlyeq$
  coincide.
\end{proof}
Moreover, unital $^*$\=/homomorphisms between such algebras are compatible with square roots:
\begin{proposition} \label{proposition:morphsqrt}
  Let $\A$ and $\algebra{B}$ be two radical Archimedean ordered $^*$\=/algebras and assume that
  the square roots of all positive Hermitian elements in $\A$ and $\algebra{B}$ exist. Moreover,
  let $\Psi \colon \A \to \algebra{B}$ be an (automatically positive) unital $^*$\=/homomorphism.
  Then $\Psi(\sqrt{a}) = \sqrt{\Psi(a)}$ holds for all $a\in \A^+_\Hermitian$.
\end{proposition}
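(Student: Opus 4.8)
The plan is to reduce everything to the uniqueness Lemma~\ref{lemma:sqrtUnique}, applied now in $\B$ with base point $\Psi(a) \in \B_\Hermitian^+$. The natural candidate for the bicommutant square root is $\sqrt{\Psi(a)} \in \{\Psi(a)\}'' \cap \B_\Hermitian^+$, which exists by hypothesis and satisfies $\big(\sqrt{\Psi(a)}\,\big)^2 = \Psi(a)$. The second element I would feed into the lemma is $\Psi(\sqrt{a})$, and the whole task is to verify that it lies in $\{\Psi(a)\}' \cap \B_\Hermitian^+$ and squares to $\Psi(a)$ as well; then the two must coincide.

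Each of these checks is routine and uses only that $\Psi$ is a positive unital $^*$\=/homomorphism together with the defining properties of $\sqrt{a}$ from Definition~\ref{definition:squareroot}. First, $\Psi(\sqrt{a})$ is Hermitian because $\Psi$ intertwines the $^*$\=/involutions and $\sqrt a$ is Hermitian, and it is positive because $\Psi$ is positive and $\sqrt a \in \A_\Hermitian^+$. Second, $\Psi(\sqrt{a})^2 = \Psi\big((\sqrt a)^2\big) = \Psi(a)$ since $\Psi$ is multiplicative. Third, $\sqrt a$ commutes with $a$ (indeed $\sqrt a \in \{a\}''$), so applying $\Psi$ gives $\Psi(\sqrt a)\,\Psi(a) = \Psi(a)\,\Psi(\sqrt a)$, i.e.\ $\Psi(\sqrt a) \in \{\Psi(a)\}'$. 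Setting $b \coloneqq \sqrt{\Psi(a)}$ and $c \coloneqq \Psi(\sqrt a)$, both squaring to $\Psi(a)$, Lemma~\ref{lemma:sqrtUnique} yields $\Psi(\sqrt a) = \sqrt{\Psi(a)}$.

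The one point that genuinely requires the strength of Lemma~\ref{lemma:sqrtUnique}, rather than a naive appeal to uniqueness of square roots, is that I can only place $\Psi(\sqrt a)$ in the commutant $\{\Psi(a)\}'$ and not a priori in the bicommutant $\{\Psi(a)\}''$: a $^*$\=/homomorphism need not map $\{a\}''$ into $\{\Psi(a)\}''$, since elements of $\B$ may commute with $\Psi(a)$ without lying in the image of $\{a\}''$. This asymmetry is exactly what Lemma~\ref{lemma:sqrtUnique} is designed to absorb, so no further work is needed, and the radical and Archimedean hypotheses on $\B$ enter only through that lemma.
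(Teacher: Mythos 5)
Your proof is correct and follows exactly the paper's own route: apply Lemma~\ref{lemma:sqrtUnique} in $\algebra{B}$ with base point $\Psi(a)$, taking $b=\sqrt{\Psi(a)}\in\{\Psi(a)\}''\cap\algebra{B}_\Hermitian^+$ and $c=\Psi(\sqrt{a})\in\{\Psi(a)\}'\cap\algebra{B}_\Hermitian^+$, both squaring to $\Psi(a)$. Your closing remark about why only membership in the commutant (not the bicommutant) can be verified for $\Psi(\sqrt{a})$ is precisely the point the asymmetric formulation of that lemma is designed to handle.
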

\begin{proof}
  Like Proposition~\ref{proposition:morphveewedgeabs},
  this follows from Lemma~\ref{lemma:sqrtUnique} because
  $\Psi(\sqrt{a}) \in \{\Psi(a)\}' \cap \algebra{B}_\Hermitian^+$
  fulfils $\Psi(\sqrt{a})^2 = \Psi(a)$ and because it is already
  known that $\sqrt{\Psi(a)} \in \{\Psi(a)\}'' \cap \algebra{B}_\Hermitian^+$
  exists.
\end{proof}
In the uniformly complete case, square roots can oftentimes be explicitly constructed:
\begin{lemma} \label{lemma:limitOfSqrt}
  Let $\A$ be a radical and uniformly complete Archimedean ordered $^*$\=/algebra, $\hat{a} \in \A^+_\Hermitian$
  and $(a_n)_{n\in \NN}$ a sequence in $\{\hat{a}\}'' \cap \A^+_\Hermitian$ with limit $\hat{a}$.
  If a sequence $(b_n)_{n\in \NN}$ in $\{\hat{a}\}'' \cap \A^+_\Hermitian$ fulfils
  $b_n^2 = a_n$ for all $n\in \NN$ and is bounded from above by some $c \in \A^+_\Hermitian$,
  then $\sqrt{\hat{a}}$ exists and $\sqrt{\hat{a}} = \lim_{n\to \infty} b_n$.
\end{lemma}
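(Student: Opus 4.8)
The plan is to show that the sequence $(b_n)_{n\in\NN}$ is Cauchy with respect to the uniform metric, so that its limit $b \coloneqq \lim_{n\to\infty} b_n$ exists by uniform completeness, and then to identify $b$ as $\sqrt{\hat a}$. Throughout I use that the bicommutant $\{\hat a\}''$ is commutative, so that the elements $a_n$, $b_n$ and $\hat a$ commute pairwise; in particular $b_m$ and $b_n$ commute for all $m,n\in\NN$, which is what makes the polynomial calculus of Proposition~\ref{proposition:polynomialcalculus} available to them.

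The heart of the argument, and the step I expect to be the main obstacle, is the Cauchy estimate, which amounts to lifting the elementary scalar inequality $(s-t)^2 \le \abs{s^2-t^2}$ (valid for $s,t\ge 0$ because $\abs{s-t}\le s+t$) to the algebra. Fix $m,n\in\NN$ large enough that $\delta \coloneqq \seminorm{\infty}{a_m-a_n}$ is finite, which holds eventually since $(a_n)$ converges; by Proposition~\ref{proposition:inftyseminormIsmin} this gives $-\delta\Unit \le a_m-a_n \le \delta\Unit$. I then apply Proposition~\ref{proposition:polynomialcalculus} to the two commuting Hermitian elements $b_m,b_n$ and the four polynomials $p_1 = x_1$, $p_2 = x_2$, $p_3 = \delta - x_1^2 + x_2^2$ and $p_4 = \delta + x_1^2 - x_2^2$; these evaluate to positive elements because $b_m,b_n\ge 0$, $b_m^2 = a_m$, $b_n^2 = a_n$ and $-\delta\Unit \le a_m - a_n \le \delta\Unit$. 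The associated semialgebraic set is $S = \set{(s,t)\in\RR^2}{s,t\ge 0,\ \abs{s^2-t^2}\le\delta}$, and the elementary inequality above shows that $q \coloneqq \delta - (x_1-x_2)^2$ is pointwise positive on $S$. Hence $q(b_m,b_n) = \delta\Unit - (b_m-b_n)^2 \ge 0$, i.e. $(b_m-b_n)^2 \le \seminorm{\infty}{a_m-a_n}\,\Unit$, so $\seminorm{\infty}{b_m-b_n} \le \sqrt{\seminorm{\infty}{a_m-a_n}}$ by Lemma~\ref{lemma:ordersquare}. As $\seminorm{\infty}{a_m-a_n}\to 0$, this proves that $(b_n)$ is Cauchy.

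By uniform completeness $b \coloneqq \lim_{n\to\infty} b_n$ exists, and Proposition~\ref{proposition:closedstuff} places it in $\{\hat a\}'' \cap \A^+_\Hermitian$ (the given bound $b_n\le c$ moreover yields $b\le c$). It remains to verify $b^2 = \hat a$, where care is needed since multiplication is not continuous in general. I write $b^2 - a_n = (b-b_n)(b+b_n)$, a product of commuting Hermitian elements, set $\eta_n \coloneqq \seminorm{\infty}{b-b_n}\to 0$, and apply Lemma~\ref{lemma:csersatz2} with outer factors $\Unit$ and $b+b_n$, middle element $\pm(b-b_n)$, and $d \coloneqq \eta_n\Unit$ to obtain $-\eta_n\big(\Unit + (b+b_n)^2\big) \le 2(b^2-a_n) \le \eta_n\big(\Unit + (b+b_n)^2\big)$. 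For $n$ large the factor $(b+b_n)^2$ is dominated by the fixed element $W \coloneqq 2b^2 + 2\hat a + 2\Unit$, using $2bb_n \le b^2 + b_n^2$ together with $b_n^2 = a_n \le \hat a + \Unit$ (the latter holding once $\seminorm{\infty}{a_n-\hat a}\le 1$). Writing $b^2 - \hat a = (b^2 - a_n) + (a_n - \hat a)$ and letting $\eta_n$ and $\seminorm{\infty}{a_n-\hat a}$ tend to $0$ then yields $-\epsilon(2\Unit+W) \le b^2-\hat a \le \epsilon(2\Unit+W)$ for every $\epsilon\in{]0,\infty[}$, so the Archimedean property forces $b^2 = \hat a$. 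Since $b\in\{\hat a\}''\cap\A^+_\Hermitian$, Definition~\ref{definition:squareroot} identifies $b$ as $\sqrt{\hat a}$, which completes the proof.
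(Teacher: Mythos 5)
Your proof is correct, but it takes a genuinely different route from the paper's in both halves. For the Cauchy estimate the paper argues elementarily: from $-\epsilon^2 \Unit \le a_n - a_N \le \epsilon^2 \Unit$ it deduces $b_N^2 \le b_n^2 + \epsilon^2\Unit \le (b_n+\epsilon\Unit)^2$ (using $b_n \ge 0$) and then invokes Proposition~\ref{proposition:radicalOrderSquare} to get $-\epsilon\Unit \le b_n - b_N \le \epsilon\Unit$; you instead obtain the equivalent bound $(b_m-b_n)^2 \le \seminorm{\infty}{a_m-a_n}\,\Unit$ by pushing the scalar inequality $(s-t)^2 \le \abs{s^2-t^2}$ through the Positivstellensatz-based polynomial calculus of Proposition~\ref{proposition:polynomialcalculus}. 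Both yield the same rate $\seminorm{\infty}{b_m-b_n} \le \sqrt{\seminorm{\infty}{a_m-a_n}}$; the paper's version is much lighter machinery, while yours makes the underlying scalar inequality explicit and would transfer to other semialgebraic estimates without new completing-the-square tricks. For the identification of the limit the paper again squeezes with Proposition~\ref{proposition:radicalOrderSquare}, arriving at $\hat{b}^2 \le a_n + \epsilon(2c+\Unit)$, and this is the only place where the hypothesis $b_n \le c$ enters; you instead factor $b^2 - a_n = (b-b_n)(b+b_n)$, control it via Lemma~\ref{lemma:csersatz2}, and dominate $(b+b_n)^2$ by the fixed element $2b^2+2\hat{a}+2\Unit$. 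A pleasant by-product of your route is that the upper bound $c$ is never used, so you have in effect shown that this hypothesis of the lemma is redundant.
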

\begin{proof}
  Given $\epsilon \in {]0,1]}$, then there exists an $N\in \NN$ such that
  $-\epsilon^2 \Unit \le a_n - a_N \le \epsilon^2 \Unit$ holds for all $n\in \NN$ with
  $n\ge N$, hence $b_N^2\le b_n^2+\epsilon^2\Unit$ and $b_n^2\le b_N^2+\epsilon^2\Unit$.
  This implies $b_N^2 \le (b_n+\epsilon\Unit)^2$
  and $b_n^2 \le (b_N+\epsilon\Unit)^2$, so $b_N \le b_n+\epsilon\Unit$
  and $b_n \le b_N+\epsilon\Unit$ by Proposition~\ref{proposition:radicalOrderSquare},
  or equivalently $-\epsilon \Unit \le b_n - b_N \le \epsilon \Unit$.
  The sequence $(b_n)_{n\in \NN}$ thus is a Cauchy sequence and 
  has a limit $\hat{b} \coloneqq \lim_{n\to\infty} b_n \in \{\hat{a}\}'' \cap \A_\Hermitian^+$
  as $\{\hat{a}\}'' \cap \A_\Hermitian^+$ is a closed subset of the complete metric space $\A$
  by Propositions~\ref{proposition:closedstuff}.
  
  It only remains to show that $\hat{b}^2 = \hat{a}$, then $\sqrt{\hat{a}} = \hat{b}$ exists.
  For all $\epsilon\in {]0,1]}$ there exists an $N\in \NN$ such that
  $-\epsilon \Unit \le b_n - \hat{b} \le \epsilon \Unit$ and
  $-\epsilon \Unit \le a_n - \hat{a} \le \epsilon \Unit$ hold for all $n\in \NN$ with $n\ge N$.
  The first estimate gives $\hat{b} \le b_n+\epsilon \Unit$ and $b_n \le \hat{b} + \epsilon \Unit$,
  and using Proposition~\ref{proposition:radicalOrderSquare} one obtains
  $\hat{b}^2 \le (b_n + \epsilon \Unit)^2 \le b_n^2 + \epsilon (2b_n + \Unit) \le a_n + \epsilon (2 c + \Unit)$
  and
  $a_n = b_n^2 \le (\hat{b} + \epsilon \Unit)^2 \le \hat{b}^2 + \epsilon (2\hat{b} + \Unit)$.
  Together with the second estimate this yields $\hat{b}^2 - \hat{a} = \hat{b}^2 - a_n + a_n - \hat{a} \le \epsilon (2 c + 2\Unit)$
  and $\hat{a} - \hat{b}^2 = \hat{a} - a_n + a_n - \hat{b}^2 \le \epsilon (2 \hat{b} + 2\Unit)$,
  so $\hat{a} = \hat{b}^2$ because $\A$ is Archimedean.
\end{proof}
\begin{proposition} \label{proposition:sqrtconst}
  Let $\A$ be a radical and uniformly complete Archimedean ordered $^*$\=/algebra and let $a \in \A^+_\Hermitian$.
  If additionally $a$ is uniformly bounded, or $a + \Unit/n$ invertible for all $n \in \NN$,
  then $\sqrt{a}$ exists.
\end{proposition}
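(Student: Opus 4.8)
The plan is to reduce both cases to Lemma~\ref{lemma:limitOfSqrt}: in each case I would exhibit a sequence $(a_n)_{n\in\NN}$ in $\{a\}''\cap\A^+_\Hermitian$ converging to $a$ in the uniform metric, together with square roots $b_n\in\{a\}''\cap\A^+_\Hermitian$ of the $a_n$ that are bounded above by a single $c\in\A^+_\Hermitian$; the lemma then yields that $\sqrt{a}=\lim_{n\to\infty}b_n$ exists.

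For the uniformly bounded case I would invoke Lemma~\ref{lemma:approximateSqrt} to obtain polynomials $p_n,q_n\in\RR[x]$ with $p_n(a)^2=a+q_n(a)$, $0\le p_n(a)$ and $0\le q_n(a)\le\Unit/n$. Setting $b_n\coloneqq p_n(a)$ and $a_n\coloneqq p_n(a)^2=a+q_n(a)$, all of these lie in $\{a\}''\cap\A^+_\Hermitian$, and $\seminorm{\infty}{a_n-a}=\seminorm{\infty}{q_n(a)}\le 1/n$ forces $a_n\to a$. For the uniform upper bound I would use $a_n\le a+\Unit\le(\seminorm{\infty}{a}+1)\Unit$, so that $b_n^2\le(\seminorm{\infty}{a}+1)\Unit$ and hence $b_n\le\sqrt{\seminorm{\infty}{a}+1}\,\Unit=:c$ by Proposition~\ref{proposition:radicalOrderSquare}. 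Lemma~\ref{lemma:limitOfSqrt} then applies directly.

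The invertible case is where the real work lies, and the main obstacle is that the uniform metric is too strong to admit the usual bounded truncations of $a$: an element like $na(a+n\Unit)^{-1}$ stays uniformly bounded and therefore cannot converge to an unbounded $a$. I would instead approximate $a$ by $a_n\coloneqq a+\Unit/n$, which does converge since $\seminorm{\infty}{a_n-a}=1/n$. To produce square roots of these I would pass to inverses: each $(a+\Unit/n)^{-1}$ is positive and, by Lemma~\ref{lemma:inverseordering}, uniformly bounded, so the already-established bounded case provides $\sqrt{(a+\Unit/n)^{-1}}\in\{a\}''\cap\A^+_\Hermitian$. Since its square $(a+\Unit/n)^{-1}$ is invertible, this square root is invertible as well, and its inverse $s_n$ satisfies $s_n^2=a+\Unit/n=a_n$ with $s_n\in\{a\}''\cap\A^+_\Hermitian$ (the positivity of $s_n$ following as in Lemma~\ref{lemma:inverseordering}, namely $s_n=s_n^*(s_n^{-2})s_n\in\A^+_\Hermitian$). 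Finally, the uniform upper bound comes from monotonicity of the square root: for $n\ge 1$ one has $s_n^2=a+\Unit/n\le a+\Unit=s_1^2$ with $s_n$ and $s_1$ commuting, so $s_n\le s_1$ by Proposition~\ref{proposition:radicalOrderSquare}, which gives the bound $c\coloneqq s_1=\sqrt{a+\Unit}$. Lemma~\ref{lemma:limitOfSqrt} then yields the existence of $\sqrt{a}$, completing the second case.
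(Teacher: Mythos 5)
Your proposal follows essentially the same route as the paper's own proof: the bounded case is exactly the combination of Lemmas~\ref{lemma:approximateSqrt} and \ref{lemma:limitOfSqrt} that the paper mentions as the direct alternative to invoking the $C^*$\-/algebra structure of $\A^\bd$, and in the invertible case your $s_n$ is literally the paper's element $b_n = (a+\Unit/n)\sqrt{(a+\Unit/n)^{-1}}$, merely described as the inverse of $\sqrt{(a+\Unit/n)^{-1}}$. You are in fact more explicit than the paper about the upper bound $c$ required by Lemma~\ref{lemma:limitOfSqrt}, which is a point the paper leaves to the reader.

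One identity is wrong as written: since everything commutes, $s_n^*\,(s_n^{-2})\,s_n = s_n\,s_n^{-2}\,s_n = \Unit$, not $s_n$. The computation you are imitating from Lemma~\ref{lemma:inverseordering} has the form $x^{-1} = (x^{-1})^*\,x\,x^{-1}$, so applied to $x = \sqrt{(a+\Unit/n)^{-1}}$ it reads $s_n = s_n^*\,s_n^{-1}\,s_n$ with middle factor $s_n^{-1} = \sqrt{(a+\Unit/n)^{-1}} \ge 0$; the exponent must be $-1$, not $-2$. Alternatively, $s_n$ is the product of the two commuting positive elements $a+\Unit/n$ and $\sqrt{(a+\Unit/n)^{-1}}$, hence positive by Corollary~\ref{corollary:radicalproducts}, which is how the paper argues. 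With this one-character repair your proof is complete and correct.
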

\begin{proof}
  As $\A^\bd$ is a $C^*$\=/algebra due to the completeness of $\A$, it is clear that the
  square root of $a$ exists in the uniformly bounded case. This can also be obtained directy by
  combining Lemmas~\ref{lemma:approximateSqrt} and the previous Lemma~\ref{lemma:limitOfSqrt}.
  
  If $a + \Unit/n$ is invertible for all $n\in \NN$, then
  $(a+\Unit/n)^{-1} \in \{a\}''\cap\A_\Hermitian^+ \cap \A^\bd$ for every $n\in \NN$ 
  by Lemma~\ref{lemma:inverseordering} and $\sqrt{(a+\Unit/n)^{-1}} \in \{(a+\Unit/n)^{-1}\}'' \cap \A_\Hermitian^+$
  exists by the first part, and one can thus construct $b_n \coloneqq (a+\Unit/n) \sqrt{(a+\Unit/n)^{-1}}$.
  By Corollary~\ref{corollary:radicalproducts}, $b_n \ge 0$ and one can easily check that even $b_n \in \{a\}'' \cap \A_\Hermitian^+$
  and $b_n^2 = a+\Unit/n$. So the previous Lemma~\ref{lemma:limitOfSqrt} applies and shows that
  $\sqrt{a}$ exists.
\end{proof}
\section{\texorpdfstring{\Sus\=/Algebras}{Su*-Algebras}} \label{sec:su}
Essentially all of the previous results hold for a class of very well-behaved ordered $^*$\=/algebras:
\begin{theorem} \label{theorem:su}
  Let $\A$ be a uniformly complete Archimedean ordered $^*$\=/algebra, then the following
  additional properties are all equivalent:
  \begin{enumerate}
    \item All elements of the form $a+ \I \Unit$ and $a-\I\Unit$ with $a\in \A_\Hermitian$ are invertible. \label{item:maininvbar}
    \item All coercive elements in $\A_\Hermitian$ are invertible, i.e.\ $\A$ is symmetric. \label{item:mainsym}
    \item $\A$ is radical and $\sqrt{a}$ exists for all $a\in \A_\Hermitian^+$. \label{item:mainsqrt}
    \item $\A$ is radical and $\abs{a}$ exists for all $a\in \A_\Hermitian$. \label{item:mainabs}
    \item $\A$ is radical and both $a\vee b$ and $a\wedge b$ exist for all commuting $a,b\in \A_\Hermitian$, i.e.\ $\A$ is a $\Phi^*$\=/algebra. \label{item:mainveewedge}
    \item $\A$ is radical and $a\wedge \Unit$ exists for all coercive $a \in \A_\Hermitian^+$. \label{item:mainsomewedge}
  \end{enumerate}
\end{theorem}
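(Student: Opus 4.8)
The plan is to close a single cycle of implications running through all six conditions, namely \refitem{item:maininvbar}~$\Rightarrow$~\refitem{item:mainsym}~$\Rightarrow$~\refitem{item:mainsqrt}~$\Rightarrow$~\refitem{item:mainabs}~$\Rightarrow$~\refitem{item:mainveewedge}~$\Rightarrow$~\refitem{item:mainsomewedge}~$\Rightarrow$~\refitem{item:maininvbar}. Most of these arrows are immediate consequences of results already established in the previous sections, where the real analytic force of uniform completeness has been absorbed into the auxiliary propositions. The genuinely new work therefore lies only in arranging those results in the right order and, above all, in the final step that must feed the weakest hypothesis \refitem{item:mainsomewedge} back into invertibility.

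For \refitem{item:maininvbar}~$\Rightarrow$~\refitem{item:mainsym} I would observe that $\Unit + a^2 = (a+\I\Unit)(a-\I\Unit)$ is then a product of invertibles, hence invertible, so that Corollary~\ref{corollary:symcond} yields symmetry. For \refitem{item:mainsym}~$\Rightarrow$~\refitem{item:mainsqrt}, symmetry implies radicality by Proposition~\ref{proposition:symisrad}, and for $a\in\A^+_\Hermitian$ each $a+\Unit/n$ is coercive, hence invertible, so $\sqrt{a}$ exists by Proposition~\ref{proposition:sqrtconst}. Then \refitem{item:mainsqrt}~$\Rightarrow$~\refitem{item:mainabs} is Proposition~\ref{proposition:absfromsqrt} applied to $a^2\in\A_\Hermitian^+$, which gives $\abs{a}=\sqrt{a^2}$; the equivalence \refitem{item:mainabs}~$\Rightarrow$~\refitem{item:mainveewedge} is exactly Corollary~\ref{corollary:Phistarchar}; and \refitem{item:mainveewedge}~$\Rightarrow$~\refitem{item:mainsomewedge} is the trivial specialisation of $a\wedge b$ to $b=\Unit$ with $a$ coercive, radicality being common to both conditions.

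The step I expect to be the crux is \refitem{item:mainsomewedge}~$\Rightarrow$~\refitem{item:maininvbar}, where only the existence of $a\wedge\Unit$ for coercive $a$ is available and full invertibility must be recovered. The key observation is that the scaling rule of Proposition~\ref{proposition:veewedge}~\refitem{item:veewedge:mult} promotes this to all the truncations one needs: for coercive $a\in\A^+_\Hermitian$ and any $n\in\NN$ the element $a/n$ is again coercive and positive, so $(a/n)\wedge\Unit$ exists by hypothesis, whence $a\wedge n\Unit = n\big((a/n)\wedge\Unit\big)$ exists as well. With all the $a\wedge n\Unit$ in hand, Proposition~\ref{proposition:invconst} applies and shows that every coercive $a$ is invertible, i.e.\ that $\A$ is symmetric. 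Finally $\Unit+a^2$ is coercive, hence invertible, for every $a\in\A_\Hermitian$, and the factorisation $\Unit+a^2 = (a+\I\Unit)(a-\I\Unit) = (a-\I\Unit)(a+\I\Unit)$ supplies two-sided inverses for both $a+\I\Unit$ and $a-\I\Unit$, closing the cycle.
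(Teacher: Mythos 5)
Your proposal is correct and takes essentially the same approach as the paper: the identical cycle of implications with the same key ingredients (Corollary~\ref{corollary:symcond}, Propositions~\ref{proposition:symisrad} and \ref{proposition:sqrtconst}, Proposition~\ref{proposition:absfromsqrt}, Corollary~\ref{corollary:Phistarchar}, and for the closing step the scaling rule of Proposition~\ref{proposition:veewedge} combined with Proposition~\ref{proposition:invconst}). The only cosmetic difference is that you run the truncation argument for all coercive elements and thus establish symmetry before deducing \refitem{item:maininvbar}, whereas the paper applies it directly to $\Unit+a^2$.
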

\begin{proof}
  Implication {\refitem{item:maininvbar}} $\implies$ {\refitem{item:mainsym}} follows from
  Corollary~\ref{corollary:symcond} because $\Unit+a^2 = (a+\I\Unit)(a-\I\Unit)$, and
  for the implication {\refitem{item:mainsym}} $\implies$ {\refitem{item:mainsqrt}} one uses
  that every symmetric Archimedean ordered $^*$\=/algebra is automatically radical by
  Proposition~\ref{proposition:symisrad} and thus one can apply Proposition~\ref{proposition:sqrtconst}.
  {\refitem{item:mainsqrt}} $\implies$ {\refitem{item:mainabs}} is Proposition~\ref{proposition:absfromsqrt},
  {\refitem{item:mainabs}} $\implies$ {\refitem{item:mainveewedge}} is Corollary~\ref{corollary:Phistarchar}
  and {\refitem{item:mainveewedge}} $\implies$ {\refitem{item:mainsomewedge}} is trivial.
  Finally, for {\refitem{item:mainsomewedge}} $\implies$ {\refitem{item:maininvbar}}, assume that
  {\refitem{item:mainsomewedge}} holds and let $a\in \A_\Hermitian$ be given. Then $\Unit+a^2$ is coercive
  and $(\Unit+a^2) \wedge (n \Unit) = n ( ((\Unit+a^2)/n) \wedge \Unit)$ exists for all $n\in \NN$ 
  by Proposition~\ref{proposition:veewedge}, so Proposition~\ref{proposition:invconst} applies
  and shows that $\Unit+a^2$ has a multiplicative inverse. But then $(a\pm \I\Unit)^{-1} = (\Unit+a^2)^{-1}(a\mp \I\Unit)$
  exists as well.
\end{proof}
\begin{definition} \label{definition:sus}
  A \neu{\Sus\=/algebra} is a uniformly complete Archimedean ordered $^*$\=/algebra that has one,
  hence all of the equivalent additional properties of the above Theorem~\ref{theorem:su}
\end{definition}
It is obvious that ``\Su'' refers to ``\textit{s}ymmetric and \textit{u}niformly complete''.
Besides the equivalent characterizations given by Theorem~\ref{theorem:su}, a \Sus\=/algebra
$\A$ also has some other interesting properties: 

Because of the existence of square roots, the order on $\A$ is simply the algebraic one, 
i.e. $\A^+_\Hermitian = \A^{++}_\Hermitian$, and every unital $^*$\=/homomorphism 
$\Psi \colon \A \to \algebra{B}$ into an ordered $^*$\=/algebra $\algebra{B}$ is automatically
positive, hence continuous for Archimedean $\algebra{B}$. If $\Psi$ is in addition injective, then it is already an order embedding
by Proposition~\ref{proposition:openmapping}. Proposition~\ref{proposition:uniqueorder} shows
that the order on $\A_\Hermitian$ is the unique one with which $\A$ becomes an ordered $^*$\=/algebra.
Unital $^*$\=/homomorphisms between \Sus\=/algebras are not only compatible with the algebraic
operations and positive, they are also compatible with $\vee$ and $\wedge$, absolute values
and square roots by Propositions~\ref{proposition:morphveewedgeabs} and \ref{proposition:morphsqrt}.
From this point of view, \Sus\=/algebras can be seen just as well-behaved $^*$\=/algebra, not 
necessarily as $^*$\=/algebra with an additional structure, because the order is not
subject to any choice and because unital $^*$\=/homomorphisms between them fulfil all compatibilities
one would reasonably expect.

All these properties are typical for $C^*$\=/algebras and complete $\Phi$\=/algebras, which
are important examples of \Sus\=/algebras: The uniformly bounded \Sus\=/algebras are
precisely the $C^*$\=/algebras, because $\seminorm{\infty}{\argument}$ in this case
is a $C^*$-norm by Proposition~\ref{proposition:Cstar},
and because conversely, $C^*$\=/algebras carry a natural ordering with respect to which they
are well-known to be uniformly bounded and uniformly complete Archimedean ordered $^*$\=/algebras,
and also symmetric due to their well-behaved spectral theory.
The commutative \Sus\=/algebras are the complexifications of complete $\Phi$\=/algebras by 
Proposition~\ref{proposition:comphistar}.

As a consequence, the representation theorems for $C^*$- and $\Phi$\=/algebras partly
apply: All uniformly bounded and closed unital $^*$\=/subalgebras
of a \Sus\=/algebra $\A$ (especially $\A^\bd$) are isomorphic to a $C^*$\=/algebra of bounded operators
on a Hilbert space. Similarly, all commutative and closed unital $^*$\=/subalgebras of $\A$,
that also contain the inverses of all coercive elements (especially bicommutants $S''$ of commutative
subsets $S\subseteq \A_\Hermitian$), are isomorphic to the complexification
of a complete $\Phi$\=/algebra of continuous functions on a compact Hausdorff space with values in the extended real line
by \cite{henriksen.johnson:structureOfArchimedeanLatticeOrderedAlgebras}.
Similar representation theorems specifically adapted to ordered $^*$\=/algebras are developped
in \cite{schoetz:preprintGelfandNaimarkTheorems}. With respect to a well-behaved functional calculus
we note that already the well-behaved polynomial calculus described in Proposition~\ref{proposition:polynomialcalculus}
is far from being trivial, but still remains to be extended to a continuous calculus for \Sus\=/algebras.
Important existing results in this direction are of course the continuous calculus for $C^*$\=/algebras, which also
applies to uniformly bounded elements of \Sus\=/algebras, and the continuous calculus for
$\Phi$\=/algebras from \cite{buskes.pagter.vanRooij:FunctionalCalculusOnRieszSpaces}.
\section{\texorpdfstring{\Sus-Algebras}{Su*-Algebras} of Unbounded Operators} \label{sec:example}
While \Sus\=/algebras are unbounded generalizations of $C^*$\=/algebras and 
non-commutative generalizations of $\Phi$\=/algebras, it still remains to
give examples of \Sus\=/algebras that are not of one of these already well-understood
types. Clearly, such examples should especially be provided by $^*$\=/algebras
of unbounded operators, i.e. by $O^*$\=/algebras like in Example~\ref{example:Ostar}.
In the general case, when $\A\subseteq\Adbar(\Dom)$ is an arbitrary $O^*$\=/algebra on a pre-Hilbert space $\Dom$,
it is clear that $\A$ is an Archimedean ordered $^*$\=/algebra. It remains to find sufficient conditions for $\A$
to be symmetric and uniformly complete. Recall that an element $a$ of a $^*$\=/algebra is called \neu{normal} if
$a^*a=a\,a^*$.
\begin{definition} \label{definition:dominantset}
  Let $\A$ be a quasi-ordered $^*$\=/algebra. Then a \neu{dominant subset} of $\A$
  is a subset $Q\subseteq \A$ of normal and pairwise commuting elements, 
  stable under $\argument^*$, with $\Unit \in Q$ and such that $q^* q$ is coercive and $\lambda q \in Q$
  as well as $qr \in Q$ hold for all $\lambda\in{[1,\infty[}$ and all $q,r\in Q$.
  For such a dominant subset define the subset $Q^\downarrow$ of the commutant of $Q$ in $\A$ as
  \begin{equation}
    Q^\downarrow 
    \coloneqq 
    \set[\big]{a\in Q'}{\exists_{q\in Q} : a^* a \lesssim q^*q \text{ and } a\,a^* \lesssim q^*q }\,.
  \end{equation}
\end{definition}
\begin{lemma} \label{lemma:dominant}
  Let $\A$ be a quasi-ordered $^*$\=/algebra and $q,r \in \A$ commuting and with 
  the property that $q^*q$ and $r^*r$ are coercive.
  Then $\lambda^2 q^* r^* r\,q$ is coercive for all $\lambda \in {]0,\infty[}$ and there
  exists a $\lambda\in {[1,\infty[}$ such that $q^*q+r^*r \lesssim \lambda^2 q^* r^* r\,q$ holds.
\end{lemma}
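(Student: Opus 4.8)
The plan is to prove the two assertions in turn, both resting on the defining order axiom $d^* a\, d \lesssim d^* b\, d$ (for $a \lesssim b$) together with the coercivity hypotheses, which I fix as $q^*q \gtrsim \epsilon \Unit$ and $r^*r \gtrsim \delta\Unit$ for suitable $\epsilon, \delta \in {]0,\infty[}$. For the coercivity of $\lambda^2 q^* r^* r\, q$, I would first insert the lower bound on $r^*r$: from $\delta \Unit \lesssim r^*r$ the axiom with $d = q$ gives $\delta\, q^*q \lesssim q^* r^* r\, q$, and since $q^*q \gtrsim \epsilon \Unit$ this yields $q^* r^* r\, q \gtrsim \delta\epsilon\, \Unit$. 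Scaling by $\lambda^2 > 0$ then shows $\lambda^2 q^* r^* r\, q \gtrsim \lambda^2 \delta \epsilon\, \Unit$, which is the claimed coercivity; note that commutativity of $q$ and $r$ is not needed for this part.

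The inequality is where commutativity enters, and I expect it to be the crux. The key observation is the rewriting $q^* r^* r\, q = r^* q^* q\, r$: since $qr = rq$ we have $q^* r^* r\, q = (rq)^*(rq) = (qr)^*(qr) = r^* q^* q\, r$. Having put the expression in this symmetric form, I can repeat the bounding argument with the roles of $q$ and $r$ interchanged: from $\epsilon \Unit \lesssim q^*q$ the axiom, now with $d = r$, gives $\epsilon\, r^*r \lesssim r^* q^* q\, r = q^* r^* r\, q$, hence $r^*r \lesssim \epsilon^{-1} q^* r^* r\, q$. Combined with the bound $q^*q \lesssim \delta^{-1} q^* r^* r\, q$ already obtained above, this gives $q^*q + r^*r \lesssim (\delta^{-1} + \epsilon^{-1})\, q^* r^* r\, q$.

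To finish I only need to absorb the constant into a factor $\lambda \ge 1$. Since $q^* r^* r\, q = (rq)^*(rq) \gtrsim 0$, enlarging the coefficient preserves the inequality, so taking $\lambda \coloneqq \max\{1, \sqrt{\delta^{-1}+\epsilon^{-1}}\}$ yields $\lambda \ge 1$ together with $q^*q + r^*r \lesssim \lambda^2 q^* r^* r\, q$, as required. The only genuine obstacle is spotting the rewriting $q^* r^* r\, q = r^* q^* q\, r$; once this symmetry is available, each of the two summands $q^*q$ and $r^*r$ is controlled by the same single step, applied to one factor at a time, and everything else is routine bookkeeping of the positive constants.
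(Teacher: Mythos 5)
Your proof is correct and follows essentially the same route as the paper: both arguments hinge on conjugating the coercivity bounds through the order axiom and on the identity $q^*r^*r\,q = r^*q^*q\,r$ coming from commutativity. The paper merely packages the two estimates into a single identity with one constant $\epsilon \le 2$ (taking $\lambda = \sqrt{2/\epsilon}$), whereas you keep two constants and take a maximum with $1$ at the end; this is only a difference in bookkeeping.
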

\begin{proof}
  Let $\epsilon \in {]0,2]}$ be given such that $q^*q \gtrsim \epsilon \Unit$ and
  $r^*r \gtrsim \epsilon \Unit$, then
  $\lambda^2 q^* r^*r\,q$ is coercive for all $\lambda \in {]0,\infty[}$ because
  $\lambda^2 q^* r^*r\,q \gtrsim \lambda^2 \epsilon\, q^*q \gtrsim \lambda^2 \epsilon^2 \Unit$.
  Moreover,
  \begin{equation*}
    (2/\epsilon) \,q^* r^*r\,q 
    = 
    q^*\,(r^* r / \epsilon-\Unit)\,q + r^*\,(q^*q / \epsilon - \Unit)\,r + q^*q + r^*r 
    \gtrsim 
    q^*q + r^*r
  \end{equation*}
  holds. So $q^*q + r^*r \lesssim \lambda^2 q^* r^*r\,q$ if one chooses 
  $\lambda \coloneqq \sqrt{2/\epsilon} \in {[1,\infty[}$.
\end{proof}
\begin{proposition} \label{proposition:domalgebra}
  Let $\A$ be a quasi-ordered $^*$\=/algebra and $Q$ a
  dominant subset of $\A$, then $Q^\downarrow$ is a unital $^*$\=/subalgebra of $Q'$, hence of $\A$,
  and $Q\subseteq Q^\downarrow$.
\end{proposition}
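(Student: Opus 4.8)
The plan is to check the defining membership condition for $Q^\downarrow$ under each operation. Since $Q$ is stable under $\argument^*$, its commutant $Q'$ is a unital $^*$\=/subalgebra of $\A$, and $Q^\downarrow \subseteq Q'$ holds by definition; hence it suffices to show that $Q^\downarrow$ contains $\Unit$ and $Q$ and is stable under addition, scalar multiplication, multiplication and $\argument^*$. Recalling that $a \in Q^\downarrow$ means precisely that $a \in Q'$ admits a single dominating $q \in Q$ with $a^*a \lesssim q^*q$ and $a\,a^* \lesssim q^*q$, each step reduces to producing a suitable dominating element out of the closure properties of $Q$ (stability under $\argument^*$ and products, and multiplication by scalars $\lambda \in {[1,\infty[}$).

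I would dispatch the easy cases first. For $\Unit$ the witness $q = \Unit \in Q$ suffices. Every $q \in Q$ lies in $Q'$ by pairwise commutativity and, being normal, satisfies $q^*q = q\,q^*$, so it is dominated by itself; thus $Q \subseteq Q^\downarrow$. Stability under $\argument^*$ is immediate, since the defining condition is symmetric in $a^*a$ and $a\,a^*$ and so $a^*$ inherits the witness of $a$. For scalar multiplication, given $a \in Q^\downarrow$ with witness $q$ and $\lambda \in \CC$, the rescaled element $\max\{\abs{\lambda},1\}\,q \in Q$ dominates both $(\lambda a)^*(\lambda a) = \abs{\lambda}^2 a^*a$ and $(\lambda a)(\lambda a)^*$.

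The two substantial closure properties are addition and multiplication. For $a,b \in Q^\downarrow$ with witnesses $q_a,q_b$, I would first bound the cross terms via Lemma~\ref{lemma:csersatz} with $\chi = 1$ (applied to the pairs $a,b$ and $a^*,b^*$), obtaining $(a+b)^*(a+b) \lesssim 2(a^*a+b^*b) \lesssim 2(q_a^*q_a + q_b^*q_b)$ and the analogous bound for $(a+b)(a+b)^*$. The decisive move is then to merge the two witnesses: Lemma~\ref{lemma:dominant} applied to the commuting elements $q_a,q_b$ (whose $q_a^*q_a,q_b^*q_b$ are coercive) produces $\lambda \in {[1,\infty[}$ with $q_a^*q_a + q_b^*q_b \lesssim \lambda^2 q_a^*q_b^*q_b\,q_a = \lambda^2 (q_bq_a)^*(q_bq_a)$, so that $q \coloneqq \sqrt{2}\,\lambda\,q_a q_b \in Q$ dominates. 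For multiplication I would use that $a,b \in Q'$, and hence also $a^*,b^*$, commute with every element of $Q$: from $a^*a \lesssim q_a^*q_a$ the order axiom gives $(ab)^*(ab) = b^*(a^*a)b \lesssim b^*q_a^*q_a\,b = q_a^*(b^*b)q_a$, and then $b^*b \lesssim q_b^*q_b$ gives $q_a^*(b^*b)q_a \lesssim q_a^*q_b^*q_b\,q_a = (q_bq_a)^*(q_bq_a)$; the symmetric computation handles $(ab)(ab)^*$, so $q_a q_b \in Q$ dominates both.

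The hard part is the addition step, where two separate witnesses $q_a,q_b$ must be replaced by a single element of $Q$: this is exactly what Lemma~\ref{lemma:dominant} delivers, and it is the reason coercivity of the $q^*q$ and closure of $Q$ under products are built into the notion of a dominant subset. The multiplication step is routine once one is careful to commute $a,a^*,b,b^*$ past the pairwise commuting elements of $Q$; this is where stability of $Q$ under $\argument^*$ is needed, guaranteeing that the adjoints of the witnesses remain available for the rearrangements.
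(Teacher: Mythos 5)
Your proof is correct and follows essentially the same route as the paper's: the same witnesses ($q_aq_b$ times a scalar), the same use of Lemma~\ref{lemma:dominant} to merge the two dominating elements in the addition step, and the same commutation argument $b^*q^*q\,b = q^*b^*b\,q$ for products; your invocation of Lemma~\ref{lemma:csersatz} with $\chi=1$ is just the paper's inequality $(a+b)^*(a+b)\lesssim (a+b)^*(a+b)+(a-b)^*(a-b)=2(a^*a+b^*b)$ in disguise. The only difference is that you spell out the easy cases (unit, scalars, $Q\subseteq Q^\downarrow$, stability under $\argument^*$) that the paper dismisses as clear.
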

\begin{proof}
  Clearly $\Unit \in Q \subseteq Q^\downarrow$, $\lambda a \in Q^\downarrow$ for all $\lambda \in \CC$
  if $a\in Q^\downarrow$ and $Q^\downarrow$ is stable under $\argument^*$. 
  Let $a,b\in Q^\downarrow$ be given, then there are $q,r \in Q$ that fulfil $a^*a \lesssim q^*q$, $a \,a^* \lesssim q^*q$ and 
  $b^*b \lesssim r^*r$, $b \,b^* \lesssim r^*r$. Then 
  $b^*a^*a\,b \lesssim b^* q^* q\,b = q^* b^* b\,q \lesssim q^* r^*r\,q$
  and similarly also $a\,b\,b^*a^* \lesssim r^*q^*q\,r = q^*r^*r\,q$
  show that $ab\in Q^\downarrow$. Moreover, by the previous
  Lemma~\ref{lemma:dominant}, there exists a $\lambda \in {[1,\infty[}$ for which
  \begin{equation*}
    (a+b)^*(a+b) \lesssim (a+b)^*(a+b) + (a-b)^*(a-b) = 2 (a^*a + b^*b) \lesssim 2(q^*q + r^*r) \lesssim 2\lambda^2 r^* q^* q\,r
  \end{equation*}
  and similarly also
  $(a+b)\,(a+b)^* \lesssim 2\lambda^2 r^* q^* q\,r$ hold, so $a+b \in Q^\downarrow$.
\end{proof}
For example, if $\Dom=\Hilb$ is a Hilbert space, then $Q\coloneqq \set{\lambda\Unit}{\lambda\in{[1,\infty[}}$
is a dominant subset of $\Adbar(\Hilb)$ and $Q^\downarrow = \Adbar(\Hilb)$ is the $^*$\=/algebra of all
bounded, i.e. of all $\seminorm{}{\argument}$\=/continuous linear operators on $\Hilb$.
Similar examples generated by a coercive Hermitian, but not necessarily bounded operator in $\Adbar(\Dom)$ 
on a general pre-Hilbert space $\Dom$ can be constructed analogously. The characterization as a
$^*$\=/algebra of continuous adjointable operators carries over as well:
\begin{definition}
  Let $\Dom$ be a pre-Hilbert space and $a\in\Adbar(\Dom)^+_\Hermitian$. Then define the positive Hermitian
  sesquilinear form $\skal{\argument}{\argument}_a\colon \Dom\times\Dom\to\CC$ as
  \begin{equation}
    \skal[\big]{\xi}{\eta}_a \coloneqq \skal[\big]{\xi}{a(\eta)}
  \end{equation}
  for all $\xi,\eta \in \Dom$. The induced seminorm is denoted by
  $\seminorm{a}{\xi} \coloneqq \sqrt{\skal{\xi}{\xi}_a}$ for all $\xi\in\Dom$.
\end{definition}
Recall that the \neu{graph topology} (see e.g. \cite[Def.~2.1.1]{schmuedgen:UnboundedOperatorAlgebraAndRepresentationTheory})
induced by an $O^*$\=/algebra $\A\subseteq\Adbar(\Dom)$ on the pre-Hilbert space 
$\Dom$ is the locally convex topology defined by all the seminorms $\seminorm{\Unit+a^*a}{\argument}$ 
with $a\in\A$, or equivalently by all the seminorms
$\seminorm{b}{\argument}$ with $b\in\A^+_\Hermitian$ because $b\le \Unit+(\Unit+b)^2$ for 
all $b\in\A^+_\Hermitian$. One can check that this set of seminorms $\seminorm{b}{\argument}$ with $b\in\A^+_\Hermitian$
is even cofinal in the set of all seminorms on $\Dom$ that are continuous with respect to the graph
topology, i.e. for every such continuous seminorm $\textrm{p}$ there exists a
$b\in \A^+_\Hermitian$ such that $\textrm{p} \le \seminorm{b}{\argument}$.
\begin{proposition} \label{proposition:downarrowchar}
  Let $\Dom$ be a pre-Hilbert space and $Q\subseteq\Adbar(\Dom)$ a dominant subset. Then the graph
  topology induced by the $O^*$\=/algebra $Q^\downarrow\subseteq\Adbar(\Dom)$ on $\Dom$ is the locally
  convex topology defined by all the seminorms $\seminorm{q^*q}{\argument}$ with $q\in Q$,
  and the set of seminorms $\seminorm{q^*q}{\argument}$ with $q\in Q$
  is cofinal in the set of all seminorms on $\Dom$ that are continuous with respect to the graph
  topology of $Q^\downarrow$. Moreover, for all $a\in Q'$ the following is equivalent:
  \begin{enumerate}
    \item $a\in Q^\downarrow$, \label{item:downarrowchar:1}
    \item $a$ and $a^*$ are both continuous as maps from $\Dom$ with the graph topology of $Q^\downarrow$ to
    $\Dom$ with the $\seminorm{}{\argument}$\=/topology, \label{item:downarrowchar:2}
    \item $a$ and $a^*$ are both continuous as maps from $\Dom$ with the graph topology of $Q^\downarrow$ to
    itself. \label{item:downarrowchar:3}
  \end{enumerate}
\end{proposition}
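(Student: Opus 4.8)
The plan is to establish the topological assertion first, since the coincidence of the graph topology of $Q^\downarrow$ with the topology generated by the seminorms $\seminorm{q^*q}{\argument}$, $q\in Q$, together with their cofinality, is exactly the bookkeeping tool on which the equivalences in the second part rely. By Proposition~\ref{proposition:domalgebra} one has $Q\subseteq Q^\downarrow$ and $Q^\downarrow$ is a unital $^*$-subalgebra, so each $q^*q$ with $q\in Q$ lies in $(Q^\downarrow)^+_\Hermitian$ and $\seminorm{q^*q}{\argument}$ is one of the seminorms defining the graph topology of $Q^\downarrow$; this gives one inclusion of topologies at no cost.

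For the converse inclusion, which is also the asserted cofinality, I would dominate an arbitrary defining seminorm $\seminorm{b}{\argument}$, $b\in (Q^\downarrow)^+_\Hermitian$, by some $\seminorm{q^*q}{\argument}$. The definition of $Q^\downarrow$ furnishes a $q\in Q$ with $b^2=b^*b\lesssim q^*q$, which for operators on $\Dom$ means $\seminorm{}{b(\xi)}\le\seminorm{}{q(\xi)}$ for all $\xi$. Since $b\ge 0$, Cauchy--Schwarz gives $\skal{\xi}{b(\xi)}\le\seminorm{}{\xi}\,\seminorm{}{b(\xi)}\le\seminorm{}{\xi}\,\seminorm{}{q(\xi)}$, and the coercivity of $q^*q$ built into the notion of a dominant subset yields an $\epsilon\in{]0,\infty[}$ with $\epsilon\,\seminorm{}{\xi}^2\le\seminorm{}{q(\xi)}^2$, hence $\seminorm{}{\xi}\le\epsilon^{-1/2}\seminorm{}{q(\xi)}$. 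Combining these gives $\seminorm{b}{\xi}^2=\skal{\xi}{b(\xi)}\le\epsilon^{-1/2}\seminorm{q^*q}{\xi}^2$, so $\seminorm{b}{\argument}$ is dominated by a scalar multiple of $\seminorm{q^*q}{\argument}$, proving both the topological claim and the cofinality.

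For the equivalences I would run the cycle \refitem{item:downarrowchar:1} $\Rightarrow$ \refitem{item:downarrowchar:2} $\Rightarrow$ \refitem{item:downarrowchar:3} $\Rightarrow$ \refitem{item:downarrowchar:1}, using throughout that $a\in Q'$ and that the $\seminorm{}{\argument}$-topology is coarser than the graph topology (because $\Unit\in Q$ and $\seminorm{\Unit^*\Unit}{\argument}=\seminorm{}{\argument}$), so that \refitem{item:downarrowchar:3} $\Rightarrow$ \refitem{item:downarrowchar:2} is automatic. For \refitem{item:downarrowchar:1} $\Rightarrow$ \refitem{item:downarrowchar:2}, a witness $q\in Q$ with $a^*a\lesssim q^*q$ and $a\,a^*\lesssim q^*q$ gives $\seminorm{}{a(\xi)}^2=\skal{\xi}{a^*a(\xi)}\le\seminorm{q^*q}{\xi}^2$ and likewise for $a^*$, so $a$ and $a^*$ are continuous into the $\seminorm{}{\argument}$-topology. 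For \refitem{item:downarrowchar:2} $\Rightarrow$ \refitem{item:downarrowchar:3} I invoke the first part: it is enough to dominate each $\xi\mapsto\seminorm{q^*q}{a(\xi)}=\seminorm{}{q(a(\xi))}$ by a graph seminorm, and since $a\in Q'$ one has $q\,a=a\,q$, whence $\seminorm{}{q(a(\xi))}=\seminorm{}{a(q(\xi))}$; continuity of $a$ into the norm together with cofinality supplies $s\in Q$ and a constant $C$ with $\seminorm{}{a(\eta)}\le C\seminorm{}{s(\eta)}$, and evaluating at $\eta=q(\xi)$ with $sq\in Q$ yields $\seminorm{q^*q}{a(\xi)}\le C\seminorm{(sq)^*(sq)}{\xi}$; the same works for $a^*$.

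The hard part will be the closing implication \refitem{item:downarrowchar:3} $\Rightarrow$ \refitem{item:downarrowchar:1}, which (via \refitem{item:downarrowchar:3} $\Rightarrow$ \refitem{item:downarrowchar:2}) reduces to producing a \emph{single} $q\in Q$ dominating both $a^*a$ and $a\,a^*$. Continuity of $a$ and $a^*$ into the norm topology, with cofinality, first gives $q_1,q_2\in Q$ and constants with $\seminorm{}{a(\xi)}\le C_1\seminorm{}{q_1(\xi)}$ and $\seminorm{}{a^*(\xi)}\le C_2\seminorm{}{q_2(\xi)}$; since $\lambda q\in Q$ for $\lambda\ge 1$ I may assume $C_1,C_2\ge 1$ and absorb them, so that $a^*a\lesssim q_1^*q_1$ and $a\,a^*\lesssim q_2^*q_2$. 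To merge the two witnesses I would apply Lemma~\ref{lemma:dominant} to the commuting $q_1,q_2\in Q$ (whose $q_i^*q_i$ are coercive): it provides a $\lambda\in{[1,\infty[}$ with $q_1^*q_1+q_2^*q_2\lesssim\lambda^2 q_1^*q_2^*q_2\,q_1=(\lambda q_2q_1)^*(\lambda q_2q_1)$, so $q\coloneqq\lambda q_2q_1\in Q$ dominates both $q_1^*q_1$ and $q_2^*q_2$, and hence $a^*a,a\,a^*\lesssim q^*q$, i.e.\ $a\in Q^\downarrow$. This step is exactly where the multiplicative closure and the coercivity axioms of a dominant subset are indispensable; everything else is a routine transcription between operator inequalities and seminorm estimates.
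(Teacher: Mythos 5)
Your proof is correct and follows essentially the same route as the paper's: the same cycle of implications, the same use of $a\in Q'$ to move $a$ past elements of $Q$ in the step \refitem{item:downarrowchar:2} $\Rightarrow$ \refitem{item:downarrowchar:3}, and the same appeal to Lemma~\ref{lemma:dominant} to merge the two witnesses in the closing implication. The only cosmetic difference is in the cofinality step, where you dominate $\seminorm{b}{\argument}$ via Cauchy--Schwarz and the coercivity of $q^*q$, whereas the paper uses $b\le(\Unit+b)^2\le q^*q$ with $q$ a witness for $\Unit+b\in Q^\downarrow$; both work, and the scalar you pick up is absorbed into $Q$ via the $\lambda q\in Q$ axiom, as you note elsewhere.
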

\begin{proof}
  The locally convex topology on $\Dom$ defined by all the seminorms $\seminorm{q^*q}{\argument}$ with $q\in Q$
  is clearly weaker than the graph topology of $Q^\downarrow$. Conversely, given 
  $b\in (Q^\downarrow)^+_\Hermitian$, then also $\Unit+b\in (Q^\downarrow)^+_\Hermitian$ and so there
  exists a $q\in Q$ such that $(\Unit+b)^2 \le q^*q$. Consequently 
  $\seminorm{b}{\argument} \le \seminorm{(\Unit+b)^2}{\argument} \le \seminorm{q^*q}{\argument}$, and it 
  follows that the locally convex topology on $\Dom$ defined by all the seminorms $\seminorm{q^*q}{\argument}$
  and the graph topology of $Q^\downarrow$ coincide and that the set of 
  seminorms $\seminorm{q^*q}{\argument}$ with $q\in Q$ is cofinal in the set of all seminorms on 
  $\Dom$ that are continuous with respect to the graph topology of $Q^\downarrow$.
  
  Now let $a\in Q^\downarrow$ be given. Then also $a^* \in Q^\downarrow$  and 
  $\seminorm{}{a(\xi)} = \seminorm{a^*a}{\xi} \le \seminorm{\Unit + a^*a}{\xi}$ and
  $\seminorm{}{a^*(\xi)} = \seminorm{aa^*}{\xi} \le \seminorm{\Unit+aa^*}{\xi}$
  hold for all $\xi \in \Dom$, which shows that $a$ and $a^*$ are both continuous as maps from $\Dom$ with the 
  graph topology of $Q^\downarrow$ to $\Dom$ with the $\seminorm{}{\argument}$\=/topology, i.e.
  {\refitem{item:downarrowchar:1}} implies {\refitem{item:downarrowchar:2}}.
  
  Next assume that some $a\in Q'$ is continuous as a map from $\Dom$ with the graph topology of $Q^\downarrow$ to
  $\Dom$ with the $\seminorm{}{\argument}$\=/topology. Then there is a $q\in Q$ such that 
  $\seminorm{}{a(\xi)} \le \seminorm{q^*q}{\xi}$ holds for all $\xi\in\Dom$, and thus
  $\seminorm{r^*r}{a(\xi)} = \seminorm[\big]{}{r\big(a(\xi)\big)}= \seminorm[\big]{}{a\big(r(\xi)\big)} \le
  \seminorm{q^*q}{r(\xi)} = \seminorm{r^*q^*q\,r}{\xi} $ holds for all $\xi\in\Dom$ and all $r\in Q$
  because $a$ and $r$ commute.
  This shows that $a$ is continuous as a map from $\Dom$ with the graph topology of $Q^\downarrow$ to
  itself. So {\refitem{item:downarrowchar:2}} implies {\refitem{item:downarrowchar:3}}.
  
  Finally, assume that $a\in Q'$ is such that $a$ and $a^*$ are both continuous as maps from $\Dom$ with
  the graph topology of $Q^\downarrow$ to itself. Then there especially exist $q,r\in Q$ such that
  $\seminorm{}{a(\xi)} \le \seminorm{q^*q}{\xi} = \seminorm{}{q(\xi)}$ and
  $\seminorm{}{a^*(\xi)} \le \seminorm{r^*r}{\xi} = \seminorm{}{r(\xi)}$ hold for all $\xi\in\Dom$,
  hence $a^*a \le q^*q \le q^*q + r^*r$ and $a\,a^*\le r^*r \le q^*q+r^*r$. By Lemma 
  \ref{lemma:dominant}, there exists a $\lambda\in{[1,\infty[}$
  such that $a^*a \le t^*t$ and $a\,a^*\le t^*t$ hold for $t\coloneqq \lambda qr \in Q$.
  We conclude that {\refitem{item:downarrowchar:3}} implies {\refitem{item:downarrowchar:1}}.
\end{proof}
Such dominated $^*$\=/algebras of operators are especially interesting
if they are closed:
Recall that an $O^*$-algebra $\A\subseteq\Adbar(\Dom)$ on a pre-Hilbert space $\Dom$
is \neu{closed} if $\Dom$ is complete with respect to the graph topology of $\A$.
\begin{lemma} \label{lemma:inftyseminormOstar}
  Let $\Dom$ be a pre-Hilbert space and $\A\subseteq\Adbar(\Dom)$ an $O^*$\=/algebra
  on $\Dom$, then
  \begin{equation}
    \seminorm{\infty}{a} = \sup_{\xi\in\Dom, \seminorm{}{\xi}=1} \seminorm{}{a(\xi)} \in [0,\infty] \label{eq:inftynormOstar}
  \end{equation}
  holds for all $a\in\A$.
\end{lemma}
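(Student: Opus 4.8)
The plan is to unwind the definition of $\seminorm{\infty}{\argument}$ from Definition~\ref{definition:inftyseminorm} directly into the geometry of the pre-Hilbert space $\Dom$, exploiting that the order on $\Adbar(\Dom)$ is the pointwise one $a\le b \iff \skal{\xi}{a(\xi)}\le\skal{\xi}{b(\xi)}$ for all $\xi$. Abbreviate the right-hand side of \eqref{eq:inftynormOstar} by $s \coloneqq \sup_{\xi\in\Dom,\,\seminorm{}{\xi}=1}\seminorm{}{a(\xi)} \in [0,\infty]$. The key preliminary step is to record the pointwise identity $\skal{\xi}{(a^*a)(\xi)} = \skal{a(\xi)}{a(\xi)} = \seminorm{}{a(\xi)}^2$, which is immediate from the defining property of the adjoint $a^*$, together with $\skal{\xi}{\xi} = \seminorm{}{\xi}^2$.

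Next I would translate the order condition appearing in $\seminorm{\infty}{a}$ into a norm estimate. For a fixed $\lambda\in{]0,\infty[}$, the relation $a^*a \le \lambda^2\Unit$ means precisely that $\skal{\xi}{(a^*a)(\xi)} \le \lambda^2\skal{\xi}{\xi}$ for all $\xi\in\Dom$, which by the identity above is the same as $\seminorm{}{a(\xi)}^2 \le \lambda^2\seminorm{}{\xi}^2$, i.e.\ $\seminorm{}{a(\xi)} \le \lambda\seminorm{}{\xi}$ for all $\xi\in\Dom$. Since $\Dom$ carries a genuine inner product, $\seminorm{}{\xi}=0$ forces $\xi=0$, so that inequality is automatic on the degenerate vectors, and by homogeneity of both sides one may restrict to unit vectors. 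Hence $a^*a \le \lambda^2\Unit$ holds if and only if $\lambda\ge s$.

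Taking the infimum as prescribed by Definition~\ref{definition:inftyseminorm} then finishes the argument: the set $\set{\lambda\in{]0,\infty[}}{a^*a\le\lambda^2\Unit}$ coincides with $\set{\lambda\in{]0,\infty[}}{\lambda\ge s}$, whose infimum is $s$ when $s<\infty$ (including the case $s=0$, where the set is all of ${]0,\infty[}$ and the infimum is $0$) and is $\infty$ when $s=\infty$ (where the set is empty and the infimum is $\infty$ by the convention of Definition~\ref{definition:inftyseminorm}). In every case one obtains $\seminorm{\infty}{a} = s$, which is exactly \eqref{eq:inftynormOstar}.

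I do not expect a genuine obstacle here, as the proof is a direct translation between the order and the Hilbert-space norm; the only points deserving care are the bookkeeping of the degenerate vectors with $\seminorm{}{\xi}=0$ and of the boundary values $s\in\{0,\infty\}$, and the remark that the empty-infimum convention built into Definition~\ref{definition:inftyseminorm} is precisely what matches the unbounded case $s=\infty$.
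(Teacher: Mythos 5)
Your proof is correct and follows essentially the same route as the paper: both translate the order condition $a^*a \le \lambda^2\Unit$ into the pointwise estimate $\seminorm{}{a(\xi)} \le \lambda\seminorm{}{\xi}$ and then identify the infimum over such $\lambda$ with the operator-norm supremum. The only cosmetic difference is that the paper invokes Proposition~\ref{proposition:inftyseminormIsmin} to phrase the result as a minimum, whereas you compute the infimum of the interval $[s,\infty[$ directly, which handles the boundary cases $s\in\{0,\infty\}$ just as cleanly.
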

\begin{proof}
  The supremum on the right hand side of \eqref{eq:inftynormOstar} is by definition
  the minimum of the set of $\lambda \in {[0,\infty]}$ for which 
  $\seminorm{}{a(\xi)} \le \lambda$ holds for all $\xi\in\Dom$ with $\seminorm{}{\xi}=1$,
  or equivalently, for which $a^*a \le \lambda^2 \Unit$ holds (where $a^*a \le \infty^2 \Unit$ is defined to
  be always true). By Definition~\ref{definition:inftyseminorm} and
  Proposition~\ref{proposition:inftyseminormIsmin}, this minimum is just $\seminorm{\infty}{a}$.
\end{proof}
\begin{proposition} \label{proposition:completeconst}
  Let $\Dom$ be a pre-Hilbert space, $Q\subseteq \Adbar(\Dom)$ a dominant subset and such that
  $Q^\downarrow$ is a closed $O^*$-algebra. Then $Q^\downarrow$ is a uniformly complete Archimedean
  ordered $^*$\=/algebra.
\end{proposition}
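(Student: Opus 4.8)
The plan is to separate the two assertions. That $Q^\downarrow$ is an Archimedean ordered $^*$\=/algebra is immediate: by Proposition~\ref{proposition:domalgebra} it is a unital $^*$\=/subalgebra of $\Adbar(\Dom)$, so it inherits this structure by Example~\ref{example:Ostar}. Hence everything reduces to uniform completeness. By Lemma~\ref{lemma:inftyseminormOstar}, applied to the $O^*$\=/algebra $Q^\downarrow$, the seminorm entering $\metric_\infty$ is the operator norm $\seminorm{\infty}{a}=\sup_{\seminorm{}{\xi}=1}\seminorm{}{a(\xi)}$, so a $\metric_\infty$\=/Cauchy sequence $(a_n)_{n\in\NN}$ in $Q^\downarrow$ is exactly an operator-norm Cauchy sequence; in particular it is bounded, and I fix $C\in{]0,\infty[}$ with $\seminorm{\infty}{a_n}\le C$ for all $n$.

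The central step is to produce the candidate limit as an operator on $\Dom$, and this is precisely where closedness of $Q^\downarrow$ is used. Fix $\xi\in\Dom$. For every $q\in Q$ the inclusion $a_n\in Q'$ gives $q(a_n(\xi)-a_m(\xi))=(a_n-a_m)(q(\xi))$, so that $\seminorm{}{q(a_n(\xi)-a_m(\xi))}\le\seminorm{\infty}{a_n-a_m}\,\seminorm{}{q(\xi)}\to 0$. Since by Proposition~\ref{proposition:downarrowchar} the graph topology of $Q^\downarrow$ is generated by the seminorms $\seminorm{q^*q}{\argument}=\seminorm{}{q(\argument)}$ with $q\in Q$, the sequence $(a_n(\xi))_{n\in\NN}$ is Cauchy in the graph topology, and as $Q^\downarrow$ is closed, i.e.\ $\Dom$ is graph-complete, it converges in $\Dom$. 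I define $\hat a(\xi)$ to be this limit; because the graph topology refines the $\seminorm{}{\argument}$\=/topology, one has $a_n(\xi)\to\hat a(\xi)$ also in $\seminorm{}{\argument}$ and indeed in the graph topology, and $\hat a\colon\Dom\to\Dom$ is visibly linear.

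Next I would check that $\hat a\in Q^\downarrow$. Running the same construction on $(a_n^*)_{n\in\NN}$, which is $\seminorm{\infty}{\argument}$\=/Cauchy because $\seminorm{\infty}{a_n^*-a_m^*}=\seminorm{\infty}{a_n-a_m}$, yields a linear map $\hat b$ with $a_n^*(\xi)\to\hat b(\xi)$; continuity of $\skal{\argument}{\argument}$ gives $\skal{\hat b(\xi)}{\eta}=\lim_n\skal{a_n^*(\xi)}{\eta}=\lim_n\skal{\xi}{a_n(\eta)}=\skal{\xi}{\hat a(\eta)}$, so $\hat a$ is adjointable with $\hat a^*=\hat b\in\Adbar(\Dom)$. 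For $q\in Q$, continuity of $q$ on $\Dom$ with the graph topology (Proposition~\ref{proposition:downarrowchar}) lets me pass to the limit in $q(a_n(\xi))=a_n(q(\xi))$ to get $q(\hat a(\xi))=\hat a(q(\xi))$, whence $\hat a\in Q'$. Finally $\seminorm{}{\hat a(\xi)}=\lim_n\seminorm{}{a_n(\xi)}\le C\seminorm{}{\xi}$ and likewise $\seminorm{}{\hat a^*(\xi)}\le C\seminorm{}{\xi}$, so $\seminorm{\infty}{\hat a}\le C$ and $\seminorm{\infty}{\hat a^*}\le C$, and Proposition~\ref{proposition:inftyseminormIsmin} gives $\hat a^*\hat a\le C^2\Unit$ and $\hat a\,\hat a^*\le C^2\Unit$. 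Taking $\lambda\coloneqq\max\{C,1\}\in{[1,\infty[}$ and $q\coloneqq\lambda\Unit\in Q$, which lies in $Q$ since $\Unit\in Q$ and $Q$ is stable under scaling by ${[1,\infty[}$, one obtains $\hat a^*\hat a\le q^*q$ and $\hat a\,\hat a^*\le q^*q$, so $\hat a\in Q^\downarrow$.

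It remains to verify $\metric_\infty$\=/convergence, which is routine. Given $\epsilon\in{]0,1[}$, choose $N$ with $\seminorm{\infty}{a_n-a_m}\le\epsilon$ for all $n,m\ge N$; for fixed $n\ge N$ and unit $\xi$, letting $m\to\infty$ in $\seminorm{}{(a_n-a_m)(\xi)}\le\epsilon$ gives $\seminorm{}{(a_n-\hat a)(\xi)}\le\epsilon$, and the supremum over $\xi$ via Lemma~\ref{lemma:inftyseminormOstar} yields $\seminorm{\infty}{a_n-\hat a}\le\epsilon$. Thus $a_n\to\hat a$ in $Q^\downarrow$ and $Q^\downarrow$ is uniformly complete. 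The main obstacle is the second paragraph: the limit must land inside $\Dom$ rather than merely in its Hilbert-space completion, and this succeeds only because the closedness hypothesis makes $\Dom$ graph-complete while the commutativity $Q\subseteq\{a_n\}'$ collapses the graph-seminorm estimates into plain operator-norm estimates.
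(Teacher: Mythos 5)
Your overall architecture matches the paper's: construct $\hat a$ pointwise using graph\-/completeness of $\Dom$, obtain the adjoint from the sequence $(a_n^*)$, check $\hat a\in Q'$, and verify $\metric_\infty$\-/convergence. But there is a genuine error at the start of your second step that propagates into the verification that $\hat a\in Q^\downarrow$. You claim that a $\metric_\infty$\-/Cauchy sequence in $Q^\downarrow$ ``is bounded'', so that you may fix $C$ with $\seminorm{\infty}{a_n}\le C$ for all $n$. This is false: $\seminorm{\infty}{\argument}$ takes values in $[0,\infty]$ and is finite only on the uniformly bounded elements, whereas $Q^\downarrow$ is designed to contain genuinely unbounded operators. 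The constant sequence at any $a\in Q^\downarrow$ with $\seminorm{\infty}{a}=\infty$ is Cauchy, yet no finite $C$ exists; Cauchyness only controls the differences $\seminorm{\infty}{a_n-a_m}$, not the elements themselves, and boundedness in the metric $\metric_\infty$ is vacuous since it is capped at $1$. Consequently your conclusions $\hat a^*\hat a\le C^2\Unit$ and $\hat a\,\hat a^*\le C^2\Unit$, and hence the choice $q=\max\{C,1\}\Unit$ witnessing $\hat a\in Q^\downarrow$, collapse --- were this argument correct, it would show that every element of $Q^\downarrow$ is dominated by a multiple of $\Unit$, forcing $Q^\downarrow$ to consist of bounded operators only, which defeats the purpose of the construction.

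The repair is exactly what the paper does and costs only a few lines: pick a single $n$ with $\seminorm{\infty}{\hat a-a_n}\le 1$ and a $q\in Q$ with $a_n^*a_n\le q^*q$ and $a_n a_n^*\le q^*q$; then $\seminorm{}{\hat a(\xi)}\le\seminorm{}{(\hat a-a_n)(\xi)}+\seminorm{}{a_n(\xi)}\le\seminorm{}{\xi}+\seminorm{q^*q}{\xi}$ for all $\xi\in\Dom$, and likewise for $\hat a^*$, so $\hat a$ and $\hat a^*$ are continuous from $\Dom$ with the graph topology of $Q^\downarrow$ to $\Dom$ with the $\seminorm{}{\argument}$\-/topology, and Proposition~\ref{proposition:downarrowchar} (the implication from its second to its first condition) gives $\hat a\in Q^\downarrow$ without your having to exhibit an explicit dominating element of $Q$ --- Lemma~\ref{lemma:dominant} takes care of that internally. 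Everything else in your write\-/up, namely the estimate $\seminorm{q^*q}{a_n(\xi)-a_m(\xi)}\le\seminorm{\infty}{a_n-a_m}\,\seminorm{}{q(\xi)}$ establishing graph\-/Cauchyness of $(a_n(\xi))_{n\in\NN}$, the adjointability of $\hat a$ via the inner product, the commutation of $\hat a$ with $Q$, and the final convergence estimate, is correct and coincides with the paper's proof.
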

\begin{proof}
  Let $(a_n)_{n\in\NN}$ be a Cauchy sequence in $Q^\downarrow$. Then for every
  $\epsilon\in{]0,\infty[}$ there exists an $N\in\NN$ such that 
  $\seminorm{\infty}{a_n-a_{N}} \le \epsilon$ holds for all $n\in\NN$
  with $n\ge N$, and due to the previous Lemma~\ref{lemma:inftyseminormOstar},
  this implies that the estimate 
  $\seminorm{q^*q}{a_n(\xi)-a_{N}(\xi)} = \seminorm[\big]{}{(a_n-a_{N})\big(q(\xi)\big)}
  \le \epsilon \seminorm{}{q(\xi)}$
  holds for all $\xi\in\Dom$, all $q\in Q$ and all $n\in\NN$ with $n\ge N$.
  So for every $\xi \in \Dom$, this together with Proposition~\ref{proposition:downarrowchar}
  shows that $\big( a_n(\xi)\big)_{n\in\NN}$ is a Cauchy 
  sequence with respect to the graph topology 
  of $Q^\downarrow$ on $\Dom$, and thus converges against a limit $\hat{a}(\xi) \in \Dom$.
  The resulting map $\hat{a}\colon \Dom \to \Dom$, $\xi\mapsto \hat{a}(\xi)$ is
  the pointwise limit of the sequence $(a_n)_{n\in\NN}$ and is easily seen to be a 
  linear endomorphism of $\Dom$. The convergence is even uniform in the sense that
  $\seminorm[\big]{}{\big(\hat{a}-a_{N}\big)(\xi)} \le 
  \seminorm[\big]{}{\big(\hat{a}-a_{n}\big)(\xi)} + \seminorm[\big]{}{\big(a_n-a_{N}\big)(\xi)} \le 2\epsilon \seminorm{}{\xi}$ 
  holds for all $\xi\in\Dom$ if $n\in \NN$ with $n\ge N$ is chosen sufficiently large.
    
  As the $^*$\=/involution is continuous
  with respect to $\metric_\infty$ on $Q^\downarrow$, also $(a_n^*)_{n\in\NN}$ is a
  Cauchy sequence in $Q^\downarrow$ and yields a pointwise limit $\tilde{a}\colon \Dom \to \Dom$.
  The inner product $\skal{\argument}{\argument} \colon \Dom\times\Dom \to \CC$
  is $\seminorm{}{\argument}$-continuous as a consequence of the Cauchy Schwarz inequality.
  Using this it is easily seen that $\tilde{a}$ is the adjoint endomorphism of $\hat{a}$,
  so $\hat{a} \in\Adbar(\Dom)$. The previous Lemma~\ref{lemma:inftyseminormOstar}
  together with the above uniform convergence estimate imply that $\hat{a}$ is the
  limit of the sequence $(a_n)_{n\in \NN}$ with respect to $\metric_\infty$ on $\Adbar(\Dom)$.
  
  It only remains to show that $\hat{a}\in Q^\downarrow$: Proposition~\ref{proposition:closedstuff} already
  shows that $\hat{a}\in Q'$. Moreover, there exists an $n\in\NN$ with $\seminorm{\infty}{\hat{a}-a_n} \le 1$,
  i.e. $\seminorm[\big]{}{\big(\hat{a}-a_n\big)(\xi)} \le \seminorm{}{\xi}$ for all $\xi\in\Dom$,
  and a $q\in Q$ fulfilling $a^*_na_n \le q^*q$ and $a_na^*_n \le q^*q$,
  i.e. $\seminorm[\big]{}{a_n(\xi)} \le \seminorm{}{q(\xi)}= \seminorm{q^*q}{\xi}$ and
  $\seminorm[\big]{}{a_n^*(\xi)} \le \seminorm{q^*q}{\xi}$ for all $\xi\in\Dom$.
  So $\seminorm[\big]{}{\hat{a}(\xi)} \le \seminorm[\big]{}{\big(\hat{a}-a_n\big)(\xi)}
  + \seminorm[\big]{}{a_n(\xi)} \le \seminorm{}{\xi} + \seminorm{q^*q}{\xi}$
  for all $\xi\in\Dom$, which shows that $\hat{a}$ is continuous as a map from
  $\Dom$ with the graph topology of $Q^\downarrow$ to $\Dom$ with the $\seminorm{}{\argument}$\=/topology.
  The same is also true for $\hat{a}^*$, so $\hat{a} \in Q^\downarrow$ by 
  Proposition~\ref{proposition:downarrowchar}.
\end{proof} 
This shows that uniform completeness of certain $O^*$\=/algebras can be guaranteed essentially
by a completeness condition on the domain. For the existence of inverses of coercive elements 
one can then make use of Proposition~\ref{proposition:invertibleWhenComplete}:
\begin{theorem} \label{theorem:Susconstruct}
  Let $\Dom$ be a pre-Hilbert space and $Q\subseteq \Adbar(\Dom)$ a dominant subset such that
  $Q^\downarrow$ is a closed $O^*$-algebra. Then $Q^\downarrow$ is a \Sus\=/algebra if and only
  if all $q\in Q$ are invertible.
\end{theorem}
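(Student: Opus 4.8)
The plan is to exploit that, by Proposition~\ref{proposition:completeconst}, $Q^\downarrow$ is already a uniformly complete Archimedean ordered $^*$\=/algebra, so that by Theorem~\ref{theorem:su} and Definition~\ref{definition:sus} the property of being a \Sus\=/algebra is equivalent to symmetry, i.e.\ to the invertibility of all coercive elements of $(Q^\downarrow)_\Hermitian$. Both implications of the ``if and only if'' then become statements relating this symmetry to the invertibility of the elements of $Q$. Throughout I would use that $Q\subseteq Q^\downarrow \subseteq Q'$ by Definition~\ref{definition:dominantset} and Proposition~\ref{proposition:domalgebra}, and that $q^*q\in Q$ for every $q\in Q$ since $Q$ is stable under $\argument^*$ and closed under products.

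For the direction that assumes $Q^\downarrow$ to be a \Sus\=/algebra, I would argue as follows. Then $Q^\downarrow$ is symmetric, so every coercive element of $(Q^\downarrow)_\Hermitian$ is invertible. Fixing $q\in Q$, the element $q^*q\in Q$ is positive Hermitian and coercive by Definition~\ref{definition:dominantset}, hence invertible. Writing $u\coloneqq (q^*q)^{-1}$ and using that $q$ is normal, i.e.\ $qq^*=q^*q$, one checks that $q^*u$ is a right inverse and $u q^*$ a left inverse of $q$, which forces $u q^* = q^* u = q^{-1}$, so $q$ is invertible.

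The harder direction assumes that all $q\in Q$ are invertible and should verify the hypothesis of Corollary~\ref{corollary:symcond}, namely that $\Unit+a^2$ is invertible for every $a\in (Q^\downarrow)_\Hermitian$. Given such an $a$, Definition~\ref{definition:dominantset} provides $q\in Q$ with $a^2=a^*a\le q^*q$, and $q^*q$ is coercive, say $q^*q\ge\epsilon\Unit$ with $\epsilon\in{]0,\infty[}$. I would then rescale, replacing $q$ by $\lambda q\in Q$ for a fixed $\lambda\in{[1,\infty[}$ with $\lambda^2\ge 1+\epsilon^{-1}$; this preserves the domination and additionally yields $q^*q\ge\Unit$ as well as $\Unit+a^2\le q^*q$. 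Now $b\coloneqq q^*q\in Q$ is a positive Hermitian invertible element commuting with $a$ (as $a\in Q'$), and the elementary estimate $(q^*q-\Unit)^2\ge 0$ gives $(q^*q)^2\ge 2q^*q-\Unit\ge q^*q$, whence $\Unit+a^2\le q^*q\le b^2$. Proposition~\ref{proposition:invertibleWhenComplete} then shows that $\Unit+a^2$ is invertible, and Corollary~\ref{corollary:symcond} makes $Q^\downarrow$ symmetric, hence a \Sus\=/algebra by condition~\refitem{item:mainsym} of Theorem~\ref{theorem:su}.

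The main obstacle is precisely this construction of an invertible square dominating $\Unit+a^2$: I only have the bound $a^2\le q^*q$ and the invertibility of the elements of $Q$ at my disposal, and in this direction $Q^\downarrow$ is not yet known to be radical, so I cannot appeal to Corollary~\ref{corollary:radicalproducts} to multiply commuting positive elements. The resolution is that rescaling $q$ together with the purely formal inequality $(q^*q-\Unit)^2\ge0$ circumvents any need for radicality and produces the required invertible $b=q^*q$ with $b^2\ge\Unit+a^2$ using only that squares of Hermitian elements are positive.
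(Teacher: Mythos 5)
Your proof is correct and follows essentially the same route as the paper: the forward direction is identical, and the backward direction likewise reduces to Proposition~\ref{proposition:invertibleWhenComplete} with $b = q^*q$ for a suitable $q\in Q$, after arranging $q^*q \ge \Unit$ so that $(q^*q)^2 \ge q^*q$ dominates the target element. The only difference is cosmetic: the paper applies the definition of $Q^\downarrow$ to $\Unit+a$ for a coercive $a$, which yields $(\Unit+a)^2 \le q^*q$ and hence $q^*q\ge\Unit$ without any rescaling, and thereby verifies symmetry directly rather than passing through Corollary~\ref{corollary:symcond}.
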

\begin{proof}
  First assume that $Q^\downarrow$ is a \Sus\=/algebra and let $q\in Q$ be given. Then $q^*q$
  is coercive, hence has an inverse, and as $q^* q = q\,q^*$ it follows that $q$ is also 
  invertible with $q^{-1} = (q^*q)^{-1}q^*$.
  
  Conversely, if all $q\in Q$ are invertible, then also all $q^*q$ with $q\in Q$. The previous 
  Proposition~\ref{proposition:completeconst} already shows that $Q^\downarrow$ is uniformly complete.
  Given a coercive $a\in\A^+_\Hermitian$ then there exists a $q\in Q$ such that $(\Unit+a)^2 \le q^*q$,
  and then $\Unit \le (\Unit+a)^2 \le q^*q$ implies $q^*q \le q^*(q^*q)\,q = (q^*q)^2$
  and therefore $a \le (\Unit+a)^2 \le q^*q \le (q^*q)^2$. It follows from 
  Proposition~\ref{proposition:invertibleWhenComplete} that $a$ is invertible, so $Q^\downarrow$ is a \Sus\=/algebra.
\end{proof}
\begin{example} \label{example:Sus}
  Let $\Hilb$ be a Hilbert space and $h$ a self-adjoint (not necessarily bounded) operator on $\Hilb$
  which is coercive in the sense that there exists an $\epsilon\in{]0,\infty[}$ such that
  $\skal{\xi}{h(\xi)} \ge \epsilon \skal{\xi}{\xi}$ holds for all vectors $\xi$ in the domain of $h$.
  Let $\Dom$ be the dense linear subspace of $\Hilb$ consisting of all smooth vectors of $h$, i.e. the
  intersection of the domains of the operators $h^n$ for all $n\in\NN$, and write $q\in\Adbar(\Dom)$ for
  the endomorphism described by the restriction of $h$ to $\Dom$, which is coercive in
  the sense of Definition~\ref{definition:radical}. Then $\Dom$ is complete
  with respect to the locally convex topology defined by all the seminorms $\seminorm{q^{2n}}{\argument}$
  with $n\in \NN_0$. Moreover, $q$ is invertible in
  $\Adbar(\Dom)$ because the inverse operator $h^{-1}\in\Adbar(\Hilb)$ of the self-adjoint coercive 
  $h$ restricts to an endomorphism of $\Dom$ as well. One can check that the set 
  $Q\coloneqq \set[\big]{\lambda q^n}{\lambda\in{[1,\infty[},\, n\in\NN_0}$ is a dominant subset of $\Adbar(\Dom)$,
  and as $q$ and hence all elements of $Q$ are invertible, the previous 
  Theorem~\ref{theorem:Susconstruct} applies and shows that $Q^\downarrow$ is a \Sus\=/algebra.
\end{example}
One important application of the above Example~\ref{example:Sus} is the case where $h$ is the Hamiltonian
operator of a quantum mechanical system. Then the \Sus\=/algebra $Q^\downarrow$ is essentially the algebra 
of all symmetries of this system that are bounded by a power of $h$. Note that choosing $h = \Unit_{\Adbar(\Hilb)}$ produces the $C^*$\=/algebra
$Q^\downarrow = \Adbar(\Hilb)$, so the construction of this Example~\ref{example:Sus} is sufficiently
general to cover (up to taking suitable $^*$\=/subalgebras) at least all $C^*$\=/algebras, and clearly many more.

\end{onehalfspace}
\end{document}